\documentclass[a4paper,fleqn]{cas-sc}

\usepackage[square,numbers,sort&compress]{natbib}
\usepackage{graphicx}          % Include this line if your
\usepackage{lineno,hyperref}
\usepackage{amsfonts}
\usepackage{amsmath}
\usepackage{amssymb}
\usepackage{amscd,multirow}
\usepackage{graphicx}
\usepackage{epstopdf}
\usepackage[titletoc]{appendix}
\usepackage{subfigure}
\usepackage[justification=centering]{caption}
\usepackage{subeqnarray,cases}
\usepackage[ruled]{algorithm2e}
\numberwithin{equation}{section}

\allowdisplaybreaks[4]
\newtheorem{theorem}{Theorem}
\newtheorem{corollary}{Corollary}
\newtheorem{lemma}{Lemma}
\newtheorem{proposition}{Proposition}
\newtheorem{remark}{Remark}
\newtheorem{example}{Example}
\newtheorem{assumption}{Assumption}

\newenvironment{proof}{\noindent {\textbf{Proof.}}}

\begin{document}
	\let\WriteBookmarks\relax
	\def\floatpagepagefraction{1}
	\def\textpagefraction{.001}
	\let\printorcid\relax
	
	\shorttitle{Convergence analysis of accelerated algorithms via a mixed-order dynamical system...}
	\shortauthors{G.H. Li, H.Y. Zhao, X.K. Sun}
	%\begin{frontmatter}
	
	\title [mode = title]{Convergence analysis of accelerated algorithms via a mixed-order dynamical system for separable nonsmooth convex optimization}\tnotemark[1]
	
	\tnotetext[1]{This research was supported by the Natural Science Foundation of Chongqing, China (CSTB2025NSCQ-GPX0137), the Science and Technology Research Program of Chongqing Municipal Education Commission (KJQN202500840), the National Natural Science Foundation of China (12001070), and the Team Building Project for Graduate Tutors in Chongqing (yds223010).}
	%\tnotetext[2]{The second title footnote which is a longer text matter
		%   to fill through the whole text width and overflow into
		%   another line in the footnotes area of the first page.}

	\author[1]{Geng-Hua Li}\ead{ligh2008cqu@163.com}
	\cormark[1]
	\cortext[1]{Corresponding author.}
	\author[1]{Hai-Yi Zhao}\ead{zhaohaiyii@163.com}
	\author[1]{Xiangkai Sun}\ead{sunxk@ctbu.edu.cn} 
	
	\address[1]{Chongqing Key Laboratory of  Statistical Intelligent Computing and Monitoring, School of Mathematics and Statistics, Chongqing Technology and Business University, Chongqing, 400067, China}
	
	%\fntext[fn1]{This is the first author footnote, but is common to third
		%  author as well.}
	%\fntext[fn2]{Another author footnote, this is a very long footnote and
		%  it should be a really long footnote. But this footnote is not yet
		%  sufficiently long enough to make two lines of footnote text.}
	
	\nonumnote{}
	
	\begin{abstract}
		For a linear equality constrained convex optimization problem involving two objective functions with a ``nonsmooth" + ``nonsmooth" composite structure, we study two algorithms derived from a mixed-order dynamical system which incorporates time scales and a Tikhonov regularization term. We observe that different types of multipliers lead to distinct algorithms. For the implicit multiplier and semi-implicit multiplier, we develop a new primal-dual joint algorithm and a new splitting algorithm, respectively. Our proposed joint algorithm can reduce to an algorithm for solving the corresponding non-separable linearly constrained convex optimization problem. Then, we establish the nonergodic convergence properties of all our proposed algorithms. Moreover, we derive that the sequences generated by these algorithms strongly converge to the minimal norm solution. Finally, numerical experiments are conducted to validate the practical performance of the proposed algorithms.
	\end{abstract}
	
	%\begin{graphicalabstract}
	%\includegraphics{figs/cas-grabs.pdf}
	%\end{graphicalabstract}
	
	\begin{highlights}
		\item Design algorithms via a mixed-order dynamics for nonsmooth separable convex problems.
		\item Demonstrate the nonergodic rates for convex and partially strongly convex functions.
		\item Demonstrate the strong convergence of the sequences generated by our algorithms.
	\end{highlights}

	\begin{keywords}
		Primal-dual splitting algorithms\sep Time discretization\sep Tikhonov regularization\sep Separable convex optimization\sep Convergence rate
	\end{keywords}

	\maketitle
	
	\section{Introduction}
	
	Let $\mathcal{X,Y,Z}$ be three real Hilbert spaces with inner product $\langle \cdot,\cdot\rangle$ and norm $\|\cdot\|$. Consider the following separable convex optimization problem:
	\begin{eqnarray}\label{P1}
		\begin{array}{ll}
			&\mathop{\mbox{min}}\limits_{x\in \mathcal{X}, y\in \mathcal{Y}}~\varPhi  (x,y):=f(x) + g(y)\\
			&~~~\mbox{s.t.}~~~~~Ax+By=b,
		\end{array}
	\end{eqnarray}
	where $f:\mathcal{X} \rightarrow \mathbb{R}\cup\{+\infty\}$ and $g:\mathcal{Y} \rightarrow \mathbb{R}\cup\{+\infty\}$ are proper, convex and lower semi-continuous functions; $A: \mathcal{X} \rightarrow \mathcal{Z}$ and $B: \mathcal{Y} \rightarrow \mathcal{Z}$ are continuous linear operators and $b\in \mathcal{Z}$; The optimal solution set $\mathbb{S}$ of problem $\eqref{P1}$ is nonempty.

	Owing to its versatility, problem \eqref{P1} plays a crucial role in applications ranging from image processing, machine learning, distributed optimization and signal recovery (see, e.g.,\cite{1,27,11,41}). When $y=0$, problem \eqref{P1} collapses to the following non-separable linear equality constrained optimization problem:
	\begin{equation}\label{P2}
		\underset{x\in \mathcal{X}}{\min}\,\,f\left( x \right),\,\, \mathrm{s}.\mathrm{t}. \,\, Ax=b.
	\end{equation}
	The Lagrangian function $\mathcal{L}:\mathcal{X}\times\mathcal{Y}\times\mathcal{Z}\to\mathbb{R}\cup\{+\infty\}$, associated with the problem \eqref{P1}, is defined by
	\begin{equation*}
		\mathcal{L}(x,y,\lambda) = f(x)+g(y)+\langle \lambda,Ax+By-b\rangle,
	\end{equation*}
	where $ \lambda $ is the Lagrangian multiplier. Accordingly, the augmented Lagrangian function corresponding to problem \eqref{P1} is defined as
	\begin{equation*}\label{zgLa}
		\mathcal{L}_\theta(x,y,\lambda) = \mathcal{L}(x,y,\lambda)+\frac{\theta}{2}\|Ax+By-b\|^2.
	\end{equation*}
	For problem \eqref{P2}, it reduces to
	\begin{equation*}\label{zgLa2}
		\mathcal{L}_\theta(x,\lambda) = \mathcal{L}(x,\lambda)+\frac{\theta}{2}\|Ax-b\|^2.
	\end{equation*}

	Consider the following saddle point problem associated problem \eqref{P1}
	\begin{equation}\label{max min}
		\underset{\lambda\in \mathcal{Z}}\max\,\underset{x\in \mathcal{X},y\in \mathcal{Y}}\min \,\mathcal{L}(x,y,\lambda).
	\end{equation}
	Let $\Omega$ denote the set of saddle points of Lagrangian function $\mathcal{L}$. This means that $(x^*,y^*,\lambda^*)\in\Omega$ if and only if
	\[ \mathcal{L}(x^*,y^*,\lambda)\leq  \mathcal{L}(x^*,y^*,\lambda^*)\leq  \mathcal{L}(x,y,\lambda^*), \qquad \forall (x,y,\lambda)\in \mathcal{X}\times\mathcal{Y}\times\mathcal{Z}.\]
	Then the optimality conditions for problem \eqref{P1} reads
	\begin{equation*}
		\begin{cases}
			-A^T\lambda^* \in \partial f(x^*),\\
			-B^T\lambda^* \in \partial g(y^*),\\
			Ax^*+By^* -b =0,
		\end{cases}
	\end{equation*}
	where $\partial f(x) = \{\nu\in\mathcal{X}|f(y)-f(x)\geq \langle \nu,y-x\rangle \}$.

	For given $ \mathcal{\lambda} $, we set
	\begin{equation*}
		\mathbb D(\lambda) := \underset{x\in \mathcal{X}, y\in \mathcal{Y}}\min\,\mathcal{L}(x,y,\lambda),\qquad
		\mathcal D(\lambda) := \underset{x\in \mathcal{X}, y\in \mathcal{Y}}{\arg\min}\,\mathcal{L}(x,y,\lambda).
	\end{equation*}
	If the optimal solution of problem \eqref{max min} is achieved by ${\lambda^*},$ then
	\begin{equation*}
		\mathcal D({\lambda}^*) = \{{(x,y)\in \mathcal{X\times Y}}|(x,y)\in \underset{x\in \mathcal{X}, y\in \mathcal{Y}}{\arg\min}\,\mathcal{L}(x,y,\lambda^*)\}\subseteq\mathbb{S}.
	\end{equation*}

	\subsection{Literature review}
	
	In these years, dynamical system schemes have attracted significant attention in optimization research. Dynamical systems provide deep insights into certain existing numerical techniques and develop novel algorithms via temporal discretization. Specifically, the unconstrained optimization problem has been addressed using both continuous dynamical methods \cite{32,4,33,3} and algorithms based on time discretization of continuous-time dynamical systems \cite{2,31,26,7}. Following research on the fast convergence properties of unconstrained convex optimization problems, researchers are now focusing on linearly constrained convex optimization.

	For non-separable linearly constrained optimization problem (\ref{P2}), it is well known that continuous dynamical systems provide an effective approach. Zeng et al. \cite{40} constructed a second-order primal-dual dynamic with asymptotic vanishing viscous damping and proved the convergence rate $ \mathcal{O}(\frac{1}{t^{\min\{2,\frac{2}{3}\alpha\}}}) $ for the primal-dual gap and $ \mathcal{O}(\frac{1}{t^{\min\{1,\frac{1}{3}\alpha\}}}) $ for the feasibility measure with $\alpha>0$ and $\beta=\min\{\frac{1}{2},\frac{3}{2}\alpha\}$. Subsequently, Bo\c{t} et al. \cite{9} extended the dynamical system in \cite{40} by incorporating a time scaling. They not only derived the convergence rates, but also established the weak convergence of the trajectories. To reduce the computational cost, He et al. \cite{20} designed a mixed-order dynamical system with a second-order ODE for the primal variable and a first-order ODE for the dual variable. They showed that this dynamical system achieved a convergence rate of $\mathcal{O}(\frac{1}{\beta(t)})$ for the primal-dual gap.

	On the other hand, some scholars have developed several rapidly convergent algorithms for the non-separable problem (\ref{P2}). One classical algorithm that has been widely studied is the augmented Lagrangian method (ALM) \cite{34} proposed by Rockafellar. So far, numerous variants of ALM have been developed, including those based on Nesterov's extrapolation technique \cite{16,36,22}, the quadratic penalty method \cite{37}, the accelerated linearization method \cite{39}, and other related techniques. In addition to modifying the ALM framework directly, another approach is to discretize continuous dynamical systems to obtain variants of ALM or other novel algorithms for problem \eqref{P2}. Bo\c{t} et al. \cite{10} proposed a new fast augmented Lagrangian algorithm from the second-order dynamical system with vanishing damping. Then they showed that the convergence rates of the primal-dual gap, the feasibility measure and the objective function value are $\mathcal{O}(\frac{1}{k^2})$. Afterwards, He et al. \cite{21} proposed an accelerated primal-dual algorithm by discretizing a mixed-order dynamical system and proved the $\mathcal{O}(\frac{1}{k^2\beta_k})$ convergence rate for the objective residual. Furthermore, Ding et al. \cite{13} proposed an algorithm by discretizing a dynamical systems with viscous damping and determined the iterates generated by the algorithm weakly converges to an optimal solution. Tikhonov regularization can guarantee strong convergence to the minimal norm solution. Following this idea, Zhu et al. \cite{42} derived a new primal-dual algorithm by incorporating Tikhonov regularization into an inertial dynamical system. Their algorithm not only retained the strong convergence property to the minimal norm solution, but also achieved an $O(\frac{1}{k^2})$ convergence rate for the primal-dual gap, objective residual, and feasibility violation. For further details on other numerical algorithms by time discretization, see \cite{24,25,38,28}.

	To better adapt to practical scenarios, approaches for the non-separable linear equality constrained optimization problem (\ref{P2}) have been extended to separable optimization problem (\ref{P1}). This extension also applies to using continuous dynamical systems to solve problem (\ref{P1}). He et al. \cite{19} considered an inertial primal-dual dynamical system featuring two second-order primal variables and one second-order dual variable. Specifically, they investigated the convergence rates under different choices of the damping coefficients. Then, Attouch et al. \cite{6} introduced a time scaling into the aforementioned dynamical system and provided fast convergence properties of the values and the feasibility gap. Moreover, Sun et al. \cite{35} investigated the following Tikhonov regularized mixed-order dynamical system:
	\begin{eqnarray}\label{zsjth}
		\left\{ \begin{array}{ll}
			\ddot{x}(t)+\gamma \dot{x}(t)+\beta(t)\left( \nabla f(x(t))+A^\top \lambda(t)+A^\top( Ax(t)+B y(t)-b)+\epsilon(t)x(t)\right)=0,\\
			\ddot{y}(t)+\gamma \dot{y}(t)+\beta(t)\left( \nabla g(y(t))+B^\top\lambda(t)+B^\top( Ax(t)+B y(t)-b)+\epsilon(t)y(t)\right)=0,\\
			\dot{\lambda }(t)-\beta(t)\left( A( x(t)+\delta \dot{x}(t))+B(y(t)+\delta\dot{y}(t))-b\right)=0.
		\end{array}
		\right.
	\end{eqnarray}
	and they established both the convergence rate and the strong convergence properties of this continuous-time system (\ref{zsjth}). For further continuous dynamical system methods on separable optimization problems, we refer to \cite{8,14}.

	Meanwhile, separable problem (\ref{P1}) has been addressed by a variety of algorithms. Among these, the Alternating Direction Method of Multipliers (ADMM) \cite{admm} is a widely used and influential approach. Over these years, numerous variants have been proposed to further enhance its performance, including symmetrization \cite{j}, proximal preconditioning \cite{17}, Nesterov's extrapolation \cite{23} and parallelization \cite{15}. By discretizing continuous dynamical systems, a broad class of algorithms is derived for problem (\ref{P1}), covering ADMM-type methods as well as other variants, thus establishing a link between continuous-time dynamics and discrete iterative schemes. In the ``Conclusion and perspective" of \cite{19}, He et al. proposed an ADMM-type algorithm by discretizing their proposed dynamical system. This algorithm can be viewed as a variant of the inertial proximal ADMM \cite{5}, though no convergence proof was provided for it. Notably, Chao et al. \cite{admmx} demonstrated that the classic ADMM can be derived from the explicit time discretization of a continuous-time dynamical system they proposed. Recently, Luo et al. \cite{30} presented a unified ordinary differential equation solver framework:
	\begin{equation}\label{luoh}
		\left\{
		\begin{aligned}
			0&{}\in\gamma x'' + (\gamma+\mu_f)x' +\partial_x \mathcal L(x,y,\lambda),\\
			0&{}=\theta\lambda ' -\nabla_\lambda \mathcal L(x+x',y+y',\lambda),\\
			0&{}\in	\beta y'' + (\beta+\mu_g)y' +\partial_y \mathcal L(x,y,\lambda),
		\end{aligned}
		\right.
	\end{equation}
	where $\mu_f,\mu_g$ is the strong convexity parameter and the parameters $(\theta,\gamma,\beta)$ are governed by $\theta'=-\theta$, $\gamma' = \mu_f-\gamma$, $\beta'=\mu_g-\beta$. With various choices of the multipliers, they obtained several algorithms by discretizing this dynamic. For convex objectives, their algorithms achieve a nonergodic convergence rate of $\mathcal{O}(\frac{1}{k})$, while for partially strongly convex ones, they achieve a rate of $\mathcal{O}(\frac{1}{k^2})$.

	To the best of our knowledge, only a few studies exist on splitting algorithms based on the discretization of dynamical systems for separable convex optimization. Motivated by the works \cite{30} and \cite{35}, this paper derives novel algorithms by considering different multipliers in the discretization of the following mixed-order dynamical system:
	\begin{equation}\label{dyn}
		\left\{ \begin{array}{ll}
			\ddot{x}(t)+\gamma_1(t) \dot{x}(t)+\beta(t)\left( \partial f(x(t))+A^\top \lambda(t)+\epsilon(t)x(t)\right)\ni0,\\
			\ddot{y}(t)+\gamma_2(t) \dot{y}(t)+\beta(t)\left( \partial g(y(t))+B^\top\lambda(t)+\epsilon(t)y(t)\right)\ni0,\\
			\dot{\lambda }(t)-\beta(t)\left( A( x(t)+\delta \dot{x}(t))+B(y(t)+\delta\dot{y}(t))-b\right)=0,
		\end{array}
		\right.
	\end{equation}
	where $t\geq t_0>0$, $\gamma_1(t)=\gamma+\mu_f\delta\beta(t)$, $\gamma_2(t)=\gamma+\mu_g\delta\beta(t)$, $\gamma $ is a constant damping coefficient, $\delta $ is a constant extrapolation coefficient, $ \beta:[t_0,+\infty)\rightarrow(0,+\infty)$ is a positive time scale, $\epsilon:  [ t_0,+\infty )\rightarrow [0,+\infty)$ is the Tikhonov regularization function and ${\mu}_f$, ${\mu}_g\geq 0$ correspond to the strong convexity parameters of $f$ and $g$. Specially, ${\mu}_f= 0$ if and only if $f$ is convex, and ${\mu}_g = 0$ if and only if $g$ is convex.
	
	\subsection{Main contributions}
	
	We summarize our main contributions as follows:
	\begin{itemize}
		\item[{\rm (i)}] For the separable nonsmooth convex optimization problem (\ref{P1}), we propose new algorithms, which can be viewed as discrete versions of a novel dynamical system (\ref{dyn}). When the objective functions are convex, we demonstrate that the objective residual and the feasibility violation converge at a rate of $\mathcal{O}(1/\beta_k)$, which is faster than the corresponding continuous-time rate of $\mathcal{O}(1/\sqrt{\beta(t)})$ reported in \cite{35} under certain conditions.
		\item[{\rm (ii)}] Compared with the convergence results in \cite{30}, the sequences generated by our proposed algorithms converge strongly to the minimal norm solution. Additionally, our work replaces the intricate equality constraint between step size and time scales in \cite{30} with a more flexible inequality, thus broadening the range of parameter selection. Furthermore, adopting a different semi-implicit multiplier choice from that in \cite{30}, we obtain Algorithm \ref{al_3} which guarantees a convergence rate of $O(1/k)$ for convex objective functions and $O(1/k^2)$ for partially strongly convex ones.
		\item[{\rm (iii)}] To solve the non-separable linear equality constrained optimization problem (\ref{P2}), we also obtain Algorithm \ref{al_4} which is a simplified version of Algorithm \ref{al_1}. Compared with \cite{13}, Algorithm \ref{al_4} introduces a regularization term so that the sequence converges strongly to the minimal norm solution of problem (\ref{P2}). When $\beta_{k}>k^2$, Algorithm \ref{al_4} exhibits a faster convergence rate than the $O(1/k^2)$ rate established in \cite{42}. If $\epsilon_k$ equals zero, Algorithm \ref{al_4} can achieve the $\mathcal O((1+\frac{\alpha_{min}}{\delta})^{-k})$ convergence rate.
	\end{itemize}
	
	\subsection{Outlines}
	
	This paper is organized as follows. In Section 2, we introduce two algorithms based on the discretization of the mixed-order dynamical system (\ref{dyn}) to solve separable nonsmooth convex optimization problem \eqref{P1}. In Section 3, we present the convergence properties of proposed algorithms. Section 4 demonstrates that under suitable conditions, the iterates $ \{(x_{k},y_{k})\}_{k\geq1} $ generated by our algorithms converge strongly to the minimal norm solution of problem \eqref{P1}. In Section 5, we derive a simplified algorithm from Algorithm \ref{al_1} for the non-separable convex optimization problem and establish its convergence properties. In Section 6, the numerical experiments are presented.
	
	\section{Accelerated algorithms via dynamical system}
	
	In this section, we adopt implicit discretization approach for the mixed-order dynamics (\ref{dyn}). The time discretization technique follows the framework introduced in \cite{10} and \cite{13}. First we define
	\begin{equation*}
		\begin{cases}
			{Z}(t):=\dot{x}(t)+\gamma{x}(t),\\
			{H}(t):=\dot{y}(t)+\gamma{y}(t),
		\end{cases}
	\end{equation*}
	and
	\begin{equation*}
		\begin{cases}
			{Z}^\delta(t):={x}(t)+\delta\dot{x}(t)=\delta{Z}(t)+(1-\delta\gamma){x}(t),\\
			{H}^\delta(t):={y}(t)+\delta\dot{y}(t)=\delta{H}(t)+(1-\delta\gamma){y}(t).
		\end{cases}
	\end{equation*}
	Then for every $t\geq t_0$, (\ref{dyn}) can be rewritten as
	\begin{equation}\label{dyn3}
		\left\{ \begin{array}{ll}
			\dot{Z}(t)\in-\beta(t)\left( \partial f(x(t))+A^\top \lambda(t)+\epsilon(t)x(t)+\mu_f\delta\dot{x}(t)\right),\\
			\dot{H}(t)\in-\beta(t)\left( \partial g(y(t))+B^\top \lambda(t)+\epsilon(t)y(t)+\mu_g\delta\dot{y}(t)\right),\\
			\dot{\lambda}(t)=\beta(t)\left( A{Z}^\delta(t)+B{H}^\delta(t)-b\right),\\
			{Z}(t)=\dot{x}(t)+\gamma{x}(t),\\
			{Z}^\delta(t)={x}(t)+\delta\dot{x}(t),\\
			{H}(t)=\dot{y}(t)+\gamma{y}(t),\\
			{H}^\delta(t)={y}(t)+\delta\dot{y}(t).
		\end{array}
		\right.
	\end{equation}

	Inspired by \cite{30}, we discretize the dynamics (\ref{dyn3}) with a step size $\alpha_k>0$ and vary the choice of multipliers $\bar{\lambda}_{k+1}$, $\hat{\lambda}_{k+1}$ and ${\lambda}_{k+1}$, resulting in the following scheme:
	\begin{equation}\label{disc}
		\left\{ \begin{array}{ll}
			\frac{{Z}_{k+1}-{Z}_k}{\alpha_k}\in-{\beta}_k( \partial f(x_{k+1})+A^\top \bar{\lambda}_{k+1}+\epsilon_k{x}_{k+1}-\mu_f(x_{k+1}-{Z}^\delta_{k+1})),\\
			\frac{{H}_{k+1}-{H}_k}{\alpha_k}\in-{\beta}_k( \partial g(y_{k+1})+B^\top \hat{\lambda}_{k+1}+\epsilon_k{y}_{k+1}-\mu_g(y_{k+1}-{H}^\delta_{k+1})),\\
			\frac{{\lambda}_{k+1}-{\lambda}_k}{\alpha_k}={\beta}_k\left( A{Z}^\delta_{k+1}+B{H}^\delta_{k+1}-b\right),\\
			{Z}_{k+1}=\frac{x_{k+1}-x_k}{\alpha_k}+\gamma{x}_{k+1},\\
			{Z}^\delta_{k+1}=\delta{Z}_{k+1}+(1-\delta\gamma)x_{k+1},\\
			{H}_{k+1}=\frac{y_{k+1}-y_k}{\alpha_k}+\gamma{y}_{k+1},\\
			{H}^\delta_{k+1}=\delta{H}_{k+1}+(1-\delta\gamma)y_{k+1}.
		\end{array}
		\right.
	\end{equation}

	It is worth mentioning that the way to define these multipliers is crucial. We first present a joint algorithm based on an implicit multiplier scheme:
	\begin{equation}\label{label1}
		\bar{\lambda}_{k+1} = \hat{\lambda}_{k+1} = {\lambda}_{k+1}=\lambda_k + \alpha_k\beta_{k}(A{Z}^\delta_{k+1}+B{H}^\delta_{k+1}-b).
	\end{equation}
	Now, we rewrite scheme (\ref{disc}) as Algorithm \ref{al_1}. One feature of Algorithm \ref{al_1} is that ${x}_{k+1}$ and ${y}_{k+1}$ are coupled.
	\begin{algorithm}\small
		\caption{Primal-dual Joint Algorithm}
		\label{al_1}
		{\bf Initialize:} Let $x_1=x_0=\frac{1}{\gamma} Z_1, y_1=y_0=\frac{1}{\gamma} H_1, {\lambda}_1={\lambda}_0$, $\delta>0,\gamma>0$. \\
		\For{$k = 1, 2,\cdots$}{
			{\bf Step1:} Let $\theta_k=(\alpha_k+{\delta})\beta_k$ and $\eta_{f,k}=\gamma+\frac{1}{\alpha_k}+\mu_f\delta\beta_k$ and
			$\eta_{g,k}=\gamma+\frac{1}{\alpha_k}+\mu_g\delta\beta_k,$\\
			$\widetilde{\lambda}_k = \lambda_k -\delta\beta_k(Ax_k+By_k-b),$\\
			$\widetilde{x}_k=x_k+\frac{Z_k-\gamma x_k}{\eta_{f,k}}, \widetilde{y}_k=y_k+\frac{H_k-\gamma y_k}{\eta_{g,k}},$\\
			$(x_{k+1}, y_{k+1}) =\mathop{\arg\min}_{(x,y)\in\mathcal{X}\times\mathcal{Y}}( \mathcal{L}_{\theta_k}(x,y,\widetilde{\lambda}_k)
			+\frac{\eta_{f,k}}{2\alpha_k\beta_k}\left\|x-\widetilde{x}_k\right\|^2+\frac{\eta_{g,k}}{2\alpha_k\beta_k}\left\|y-\widetilde{y}_k\right\|^2 +\frac{{\epsilon}_k}{2}\left\|x\right\|^2+\frac{{\epsilon}_k}{2}\left\|y\right\|^2) ,$\\
			{\bf Step2:}$({Z}_{k+1},{H}_{k+1})=(\gamma+\frac{1}{\alpha_k})(x_{k+1},y_{k+1})-\frac{1}{\alpha_k}(x_k,y_k),$\\
			$({Z}^\delta_{k+1},{H}^\delta_{k+1})=\delta({Z}_{k+1},{H}_{k+1})+(1-\delta\gamma)(x_{k+1},y_{k+1}),$\\
			$\lambda_{k+1} = \lambda_k + \alpha_k\beta_{k}(A{Z}^\delta_{k+1}+B{H}^\delta_{k+1}-b)$.
		}
	\end{algorithm}

	To better utilize the separable structure of the objective function, we adopt a semi-implicit multiplier scheme to derive the following primal-dual splitting algorithm. We give $\hat{\lambda}_{k+1}$ and $\bar{\lambda}_{k+1}$ through a semi-implicit multiplier scheme as follows:
	\begin{eqnarray}\label{label3}
		\bar{\lambda}_{k+1} = \lambda_k + \alpha_k{\beta}_{k}(AZ^\delta_{k+1}+BH^\delta_{k}-b);\qquad \hat{\lambda}_{k+1} = {\lambda}_{k+1}.
	\end{eqnarray}
	Thus, we can reformulate scheme (\ref{disc}) as Algorithm \ref{al_3}.
	\begin{algorithm}
		\caption{Primal-dual Splitting Algorithm}
		\label{al_3}
		{\bf Initialize:} Let $x_1=x_0=\frac{1}{\gamma} Z_1, y_1=y_0=\frac{1}{\gamma} H_1, {\lambda}_1={\lambda}_0$, $\delta>0,\gamma>0$. \\
		\For{$k = 1, 2,\cdots$}{
			{\bf Step1:} Let $\theta_k=(\alpha_k+{\delta})\beta_k$ and $\eta_{f,k}=\gamma+\frac{1}{\alpha_k}+\mu_f\delta\beta_k,$ \\
			$\widetilde{\lambda}_{1,k} = \lambda_k -\delta\beta_k(Ax_k+By_k-b) +\delta\alpha_k\beta_kB(H_k-\gamma y_k)$,\\
			$\widetilde{x}_k=x_k+\frac{Z_k-\gamma x_k}{\eta_{f,k}},$\\
			$x_{k+1} = \underset{x\in \mathcal{X}}{\arg\min}\left(\mathcal{L}_{\theta_k}\left(x,y_k,\widetilde{\lambda}_{1,k}\right)
			+\frac{\eta_{f,k}}{2\alpha_k\beta_k}\left\|x-\widetilde{x}_k\right\|^2
			+ \frac{\epsilon_k}{2}\left\|x\right\|^2 \right),$\\
			${Z}_{k+1}=\frac{x_{k+1}-x_k}{\alpha_k}+\gamma{x}_{k+1},$\\
			${Z}^\delta_{k+1}=\delta{Z}_{k+1}+(1-\delta\gamma)x_{k+1},$\\
			{\bf Step2:} Let $\eta_{g,k}=\gamma+\frac{1}{\alpha_k}+\mu_g\delta\beta_k,$ \\
			$\widetilde{\lambda}_{2,k} = \lambda_k -\delta\beta_k(Ax_{k}+By_k-b)$,\\
			$\widetilde{y}_k=y_k+\frac{H_k-\gamma y_k}{\eta_{g,k}},$\\
			$y_{k+1} = \underset{y\in \mathcal{Y}}{\arg\min}\left(\mathcal{L}_{\theta_k}\left(x_{k+1},y,\widetilde{\lambda}_{2,k} \right)+\frac{\eta_{g,k}}{2\alpha_k\beta_k}\left\|y-\widetilde{y}_k\right\|^2
			+ \frac{\epsilon_k}{2}\left\|y\right\|^2 \right),$\\
			${H}_{k+1}=\frac{y_{k+1}-y_k}{\alpha_k}+\gamma{y}_{k+1},$\\
			${H}^\delta_{k+1}=\delta{H}_{k+1}+(1-\delta\gamma)y_{k+1},$\\
			{\bf Step3:} $\lambda_{k+1} = \lambda_k + \alpha_k\beta_{k}(A{Z}^\delta_{k+1}+B{H}^\delta_{k+1}-b)$.
		}
	\end{algorithm}

	The following propositions demonstrate that our proposed Algorithms \ref{al_1} and \ref{al_3} can be interpreted as specific instances of the discretization scheme (\ref{disc}), differing only in their choices of multipliers.
	\begin{proposition}\label{pro1}
		Algorithm \ref{al_1} is equivalent to the scheme $(\ref{disc})$ when the multipliers are given implicitly by $(\ref{label1})$.
	\end{proposition}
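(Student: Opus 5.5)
The plan is to show that the first-order optimality conditions of the joint subproblem in Step 1 of Algorithm \ref{al_1} coincide, line by line, with the inclusions in \eqref{disc} once the multipliers are fixed by \eqref{label1}. Both directions then follow, because the subproblem is strongly convex and hence has exactly one minimizer. Throughout, write $r_k:=Ax_k+By_k-b$ and drop the index $k$ on $\alpha_k,\beta_k,\epsilon_k$ when this causes no confusion.

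\textbf{Step 1: eliminate $Z_{k+1},Z^\delta_{k+1}$ from the first inclusion of \eqref{disc}.} The relations ${Z}_{k+1}=\frac{x_{k+1}-x_k}{\alpha}+\gamma x_{k+1}$ and ${Z}^\delta_{k+1}=\delta Z_{k+1}+(1-\delta\gamma)x_{k+1}$ give
\[
Z^\delta_{k+1}-x_{k+1}=\delta(Z_{k+1}-\gamma x_{k+1})=\tfrac{\delta}{\alpha}(x_{k+1}-x_k).
\]
Hence $-\mu_f(x_{k+1}-Z^\delta_{k+1})=\frac{\mu_f\delta}{\alpha}(x_{k+1}-x_k)$. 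Substituting, and dividing the first inclusion by $\beta$, its left-hand side combined with the $\mu_f$ term becomes
\[
\tfrac{1}{\alpha\beta}\Big[\big(\tfrac1\alpha+\gamma+\mu_f\delta\beta\big)(x_{k+1}-x_k)-(Z_k-\gamma x_k)\Big]
=\tfrac{\eta_{f,k}}{\alpha\beta}\,(x_{k+1}-\widetilde x_k),
\]
using the definitions of $\eta_{f,k}$ and $\widetilde x_k$. So the first inclusion of \eqref{disc} is equivalent to
\[
0\in\partial f(x_{k+1})+A^\top\lambda_{k+1}+\epsilon x_{k+1}+\tfrac{\eta_{f,k}}{\alpha\beta}(x_{k+1}-\widetilde x_k).
\]
The identical computation turns the second inclusion into the analogous condition for $y$ with $g$, $B$, $\eta_{g,k}$ and $\widetilde y_k$.

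\textbf{Step 2: rewrite the multiplier update.} Using $AZ^\delta_{k+1}=Ax_{k+1}+\frac{\delta}{\alpha}A(x_{k+1}-x_k)$ and the analogous identity for $BH^\delta_{k+1}$, we get
\[
\alpha\beta\,(AZ^\delta_{k+1}+BH^\delta_{k+1}-b)=(\alpha+\delta)\beta\, r_{k+1}-\delta\beta\, r_k .
\]
Therefore \eqref{label1} reads $\lambda_{k+1}=\widetilde\lambda_k+\theta_k r_{k+1}$.

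\textbf{Step 3: identify the optimality conditions of the subproblem.} Since $\partial_x\mathcal L_{\theta_k}(x,y,\widetilde\lambda_k)=\partial f(x)+A^\top(\widetilde\lambda_k+\theta_k(Ax+By-b))$, at $(x_{k+1},y_{k+1})$ this equals $\partial f(x_{k+1})+A^\top\lambda_{k+1}$ by Step 2, and likewise in $y$. The objective in Step 1 of Algorithm \ref{al_1} is the separable function $f(x)+g(y)$ plus a smooth convex quadratic, so its subdifferential is the product $\partial f(x)\times\partial g(y)$ shifted by the gradient of the quadratic. Its optimality condition is therefore exactly the pair of conditions obtained in Step 1.

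The objective is strongly convex (with modulus at least $\eta/(\alpha\beta)>0$), so it has a unique minimizer, and this minimizer is characterized by those two conditions. Together with Step 2 of the algorithm, which reproduces the remaining definitional lines of \eqref{disc}, this gives the equivalence in both directions.

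\textbf{Main obstacle.} The only delicate point is the bookkeeping in Step 2. One must make sure that the implicit multiplier $\lambda_{k+1}$ appearing in \eqref{disc} is recovered exactly through the augmented term $\theta_k r_{k+1}$ with the shifted multiplier $\widetilde\lambda_k$. This is what produces the coupling between $x_{k+1}$ and $y_{k+1}$, and it is why a joint minimization is needed.
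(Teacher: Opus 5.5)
Your proof is correct and follows the paper's own argument. Both match the Step 1 optimality conditions to the inclusions in \eqref{disc} via the identities $Z^\delta_{k+1}-x_{k+1}=\frac{\delta}{\alpha_k}(x_{k+1}-x_k)$ and $\lambda_{k+1}=\widetilde\lambda_k+\theta_k(Ax_{k+1}+By_{k+1}-b)$; you simply run the computation from \eqref{disc} toward the algorithm rather than the reverse. Your explicit use of the product structure $\partial f(x)\times\partial g(y)$ of the joint subdifferential, together with strong convexity (a unique minimizer), rigorously justifies the converse direction, which the paper only asserts by saying the steps are invertible.
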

	\begin{proof}
		By using the optimality criterion, from Step 1 of Algorithm \ref{al_1}, we get
		\begin{align}\label{xjd1}
			\partial_x\mathcal{L}_{\theta_k}(x_{k+1},y_{k+1},\widetilde{\lambda}_k)+\frac{\eta_{f,k}}{\alpha_k\beta_k}(x_{k+1}-\widetilde{x}_k)+\epsilon_kx_{k+1}\ni0.
		\end{align}
		In view of $\lambda_{k+1}=\widetilde{\lambda}_k+(\alpha_k+\delta)\beta_k(Ax_{k+1}+By_{k+1}-b)$, it follows that
		\begin{align*}
			\partial_x \mathcal{L}(x_{k+1},y_{k+1},\lambda_{k+1})&=\partial_x\mathcal{L}(x_{k+1},y_{k+1},\widetilde{\lambda}_k)+
			\theta_kA^T(Ax_{k+1}+By_{k+1}-b)=\partial_x \mathcal{L}_{\theta_k}(x_{k+1},y_{k+1},\widetilde{\lambda}_k).
		\end{align*}
		This together with \eqref{xjd1} implies
		\begin{align*}
			\partial_x \mathcal{L}(x_{k+1},y_{k+1},\lambda_{k+1})+\frac{\eta_{f,k}}{\alpha_k\beta_k}(x_{k+1}-\widetilde{x}_k)+\epsilon_kx_{k+1}\ni0.
		\end{align*}
		There is no doubt that
		\begin{align*}
			x_{k+1}\in\widetilde{x}_k-\frac{\alpha_k\beta_k}{\eta_{f,k}}(\partial f(x_{k+1})+A^T\lambda_{k+1}+\epsilon_kx_{k+1}).
		\end{align*}
		From the definition of $\eta_{f,k}$ and $\widetilde{x}_k$, we have
		\begin{align*}
			\gamma x_{k+1}+\frac{x_{k+1}-x_k}{\alpha_k}-Z_k\in-\mu_f\delta\beta_k(x_{k+1}-x_k)-\alpha_k\beta_k(\partial f(x_{k+1})+A^\top {\lambda}_{k+1}+\epsilon_k{x}_{k+1}).
		\end{align*}
		Using $x_{k+1}-Z_{k+1}^\delta=-\frac{\delta}{\alpha_k}(x_{k+1}-x_k)$ and the definition of $Z_{k+1}$, it follows that
		\begin{eqnarray*}
			\frac{Z_{k+1}-Z_k}{\alpha_k}\in \beta_k\mu_f(x_{k+1}-Z_{k+1}^\delta)-\beta_k(\partial f(x_{k+1})+A^\top {\lambda}_{k+1}+\epsilon_k{x}_{k+1}).
		\end{eqnarray*}
		Similarly, we derive
		\begin{eqnarray*}
			\frac{H_{k+1}-H_k}{\alpha_k}\in \beta_k\mu_g(y_{k+1}-H_{k+1}^\delta)-\beta_k(\partial g(y_{k+1})+B^\top {\lambda}_{k+1}+\epsilon_k{y}_{k+1}).
		\end{eqnarray*}
		Thus, the iterative sequence $ \{(x_{k},y_{k},\lambda_{k})\}_{k\geq1} $ generated by Algorithm \ref{al_1} satisfies scheme (\ref{disc}). Since the above process are invertible, Algorithm \ref{al_1} can conversely be derived from (\ref{disc}).
	\end{proof}
	\begin{proposition}\label{pro4}
		Algorithm \ref{al_3} is equivalent to the scheme $(\ref{disc})$ when the multipliers are given semi-implicitly by $(\ref{label3})$.
	\end{proposition}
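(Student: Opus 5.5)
We follow the pattern of the proof of Proposition \ref{pro1}: write down the optimality conditions of the two subproblems in Algorithm \ref{al_3}, rewrite the augmented terms as plain Lagrangian terms evaluated at the right multipliers, and then reverse the substitutions that defined $\widetilde{x}_k$, $\widetilde{y}_k$ and $\eta_{f,k}$, $\eta_{g,k}$.

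\emph{The $x$-update.} The optimality condition of the $x$-subproblem reads
\begin{align*}
\partial f(x_{k+1})+A^\top\big(\widetilde{\lambda}_{1,k}+\theta_k(Ax_{k+1}+By_k-b)\big)+\frac{\eta_{f,k}}{\alpha_k\beta_k}(x_{k+1}-\widetilde{x}_k)+\epsilon_kx_{k+1}\ni0 .
\end{align*}
The key identity is
\[
\bar{\lambda}_{k+1}=\widetilde{\lambda}_{1,k}+\theta_k(Ax_{k+1}+By_k-b).
\]
To check it, use $Z^\delta_{k+1}=x_{k+1}+\frac{\delta}{\alpha_k}(x_{k+1}-x_k)$, so that
\[
\alpha_k\beta_kAZ^\delta_{k+1}=(\alpha_k+\delta)\beta_kAx_{k+1}-\delta\beta_kAx_k .
\]
For the $y$-part, $H^\delta_k=\delta H_k+(1-\delta\gamma)y_k$, hence
\[
\alpha_k\beta_kBH^\delta_k=\alpha_k\beta_kBy_k+\delta\alpha_k\beta_kB(H_k-\gamma y_k).
\]
Collecting these terms and comparing with the definition of $\widetilde{\lambda}_{1,k}$ gives
\[
\bar{\lambda}_{k+1}=\lambda_k+\alpha_k\beta_k(AZ^\delta_{k+1}+BH^\delta_k-b)=\widetilde{\lambda}_{1,k}+(\alpha_k+\delta)\beta_k(Ax_{k+1}+By_k-b).
\]
This is the one step that differs from Proposition \ref{pro1}, and it is the main bookkeeping obstacle: the correction term $\delta\alpha_k\beta_kB(H_k-\gamma y_k)$ in $\widetilde{\lambda}_{1,k}$ is designed precisely to absorb the mismatch between $By_k$ and $BH^\delta_k$. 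With the identity in hand, the manipulation of Proposition \ref{pro1} carries over verbatim. We substitute $\widetilde{x}_k=x_k+(Z_k-\gamma x_k)/\eta_{f,k}$ and $\eta_{f,k}=\gamma+\frac1{\alpha_k}+\mu_f\delta\beta_k$, then use $x_{k+1}-Z^\delta_{k+1}=-\frac{\delta}{\alpha_k}(x_{k+1}-x_k)$ and the definition of $Z_{k+1}$. This yields the first inclusion of (\ref{disc}) with $\bar{\lambda}_{k+1}$ as in (\ref{label3}).

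\emph{The $y$-update.} The optimality condition gives
\[
\partial g(y_{k+1})+B^\top\big(\widetilde{\lambda}_{2,k}+\theta_k(Ax_{k+1}+By_{k+1}-b)\big)+\frac{\eta_{g,k}}{\alpha_k\beta_k}(y_{k+1}-\widetilde{y}_k)+\epsilon_ky_{k+1}\ni0 .
\]
Exactly as in Proposition \ref{pro1}, we have
\[
\lambda_{k+1}=\widetilde{\lambda}_{2,k}+(\alpha_k+\delta)\beta_k(Ax_{k+1}+By_{k+1}-b),
\]
by expanding $Z^\delta_{k+1}$ and $H^\delta_{k+1}$ in terms of $x_{k+1},x_k,y_{k+1},y_k$. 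So the bracket equals $\hat{\lambda}_{k+1}=\lambda_{k+1}$. The same algebra as for $x$ then produces the second inclusion of (\ref{disc}).

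\emph{Remaining relations and the converse.} Step 3 of Algorithm \ref{al_3} is literally the third line of (\ref{disc}), and the definitions of $Z_{k+1}$, $Z^\delta_{k+1}$, $H_{k+1}$, $H^\delta_{k+1}$ coincide with the last four lines of (\ref{disc}). For the converse direction, every step above is an equivalence. The subproblems are strongly convex (because of the proximal terms), so the inclusions characterize $x_{k+1}$ and $y_{k+1}$ uniquely, and the multiplier identities are algebraic. Hence (\ref{disc}) with (\ref{label3}) reproduces Algorithm \ref{al_3}.
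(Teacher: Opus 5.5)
Your proposal is correct and follows essentially the same route as the paper: you take the optimality conditions of the two subproblems, use the identities $\bar\lambda_{k+1}=\widetilde\lambda_{1,k}+\theta_k(Ax_{k+1}+By_k-b)$ and $\lambda_{k+1}=\widetilde\lambda_{2,k}+\theta_k(Ax_{k+1}+By_{k+1}-b)$, and then reuse the algebra of Proposition \ref{pro1}. You also explicitly verify the $\bar\lambda_{k+1}$ identity, which the paper only asserts. Your justification of the converse through strong convexity and uniqueness of the subproblem minimizers makes precise the paper's remark that the steps are invertible.
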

	\begin{proof}
		As $\widetilde{\lambda}_{1,k} = \lambda_k -\delta\beta_k(Ax_k+By_k-b) +\delta\alpha_k\beta_kB(H_k-\gamma y_k)$, we can observe that
		\begin{eqnarray*}
			\bar{\lambda}_{k+1}=\widetilde{\lambda}_{1,k}+(\alpha_k+\delta)\beta_k(Ax_{k+1}+By_k-b),
		\end{eqnarray*}
		Then,
		\begin{equation}\label{xjd2}
			\partial _x \mathcal{L}(x_{k+1},y_{k+1},\bar \lambda_{k+1})
			=\partial _x \mathcal{L}( x_{k+1},y_{k+1},\widetilde\lambda_{1,k})+(\alpha_k+\delta)\beta_kA^T(Ax_{k+1}+By_k-b) 
			=\partial _x\mathcal{L}_{\theta_k}(x_{k+1},y_k,\widetilde{\lambda}_{1,k}).
		\end{equation}
		It follows from the optimality condition for Step 1 in Algorithm \ref{al_3} that
		\begin{eqnarray}\label{xjd3}
			\partial_x\mathcal{L}_{\theta_k}(x_{k+1},y_k,\widetilde{\lambda}_{1,k})+\frac{\eta_{f,k}}{\alpha_k\beta_k}(x_{k+1}-\widetilde{x}_k)+\epsilon_kx_{k+1}\ni0.
		\end{eqnarray}
		Now, combining \eqref{xjd3} with \eqref{xjd2}, we obtain
		\begin{eqnarray*}
			\partial_x \mathcal{L}(x_{k+1},y_k,\bar{\lambda}_{k+1})+\frac{\eta_{f,k}}{\alpha_k\beta_k}(x_{k+1}-\widetilde{x}_k)+\epsilon_kx_{k+1}\ni0.
		\end{eqnarray*}
		This implies
		\begin{eqnarray*}
			x_{k+1}\in\widetilde{x}_k-\frac{\alpha_k\beta_k}{\eta_{f,k}}(\partial f(x_{k+1})+A^T\bar\lambda_{k+1}+\epsilon_kx_{k+1}).
		\end{eqnarray*}
		Thus, using a similar argument as Proposition \ref{pro1} but with $\lambda_{k+1}$ replaced by $\bar\lambda_{k+1}$, we know that
		\begin{eqnarray*}
			\frac{Z_{k+1}-Z_k}{\alpha_k}\in \beta_k\mu_f(x_{k+1}-Z_{k+1}^\delta)-\beta_k(\partial f(x_{k+1})+A^\top \bar{\lambda}_{k+1}+\epsilon_k{x}_{k+1}).
		\end{eqnarray*}
		From $\widetilde{\lambda}_{2,k} = \lambda_k -\delta\beta_k(Ax_{k}+By_k-b)$, we get
		\begin{eqnarray*}
			{\lambda}_{k+1}=\widetilde{\lambda}_{2,k}+(\alpha_k+\delta)\beta_k(Ax_{k+1}+By_{k+1}-b).
		\end{eqnarray*}
		Thus,
		\begin{align*}
			\partial _y \mathcal{L}(x_{k+1},y_{k+1}, \lambda_{k+1})&=\partial _y\mathcal{L}(x_{k+1},y_{k+1},\widetilde{\lambda}_{2,k})+
			\theta_kA^T(Ax_{k+1}+By_{k+1}-b)=\partial _y\mathcal{L}_{\theta_k}(x_{k+1},y_{k+1},\widetilde{\lambda}_{2,k}).
		\end{align*}
		Using the optimality condition for Step 2 in Algorithm \ref{al_3}, we obtain
		\begin{eqnarray*}
			\partial_y\mathcal{L}_{\theta_k}(x_{k+1},y_{k+1},\widetilde{\lambda}_{2,k})+\frac{\eta_{g,k}}{\alpha_k\beta_k}(y_{k+1}-\widetilde{y}_k)+\epsilon_ky_{k+1}\ni0.
		\end{eqnarray*}
		This leads to
		\begin{eqnarray*}
			\partial_y\mathcal{L}(x_{k+1},y_{k+1},{\lambda}_{k+1})+\frac{\eta_{g,k}}{\alpha_k\beta_k}(y_{k+1}-\widetilde{y}_k)+\epsilon_ky_{k+1}\ni0.
		\end{eqnarray*}
		It follows that
		\begin{eqnarray*}
			y_{k+1}\in\widetilde{y}_k-\frac{\alpha_k\beta_k}{\eta_{g,k}}(\partial g(y_{k+1})+B^T\lambda_{k+1}+\epsilon_ky_{k+1}).
		\end{eqnarray*}
		Following the proof of Proposition \ref{pro1}, we can establish the equivalence between the algorithm \ref{al_3} and the scheme (\ref{disc}).
	\end{proof}
	\begin{remark}
		Given another semi-implicit multiplier scheme below:
		\begin{eqnarray*}\label{labelfd}
			\bar{\lambda}_{k+1} = {\lambda}_{k+1};\qquad \hat{\lambda}_{k+1} = \lambda_k + \alpha_k{\beta}_{k}(AZ^\delta_{k}+BH^\delta_{k+1}-b).
		\end{eqnarray*}
		we can obtain a splitting algorithm symmetric to Algorithm \ref{al_3}, essentially swapping the two variables and their corresponding coefficients. Hence, we only analyze Algorithm \ref{al_3} here. The interested reader is referred to Remark 3.2 in \cite{30}.
	\end{remark}
	
	\section{Fast convergence properties}
	
	In this section, we will analyze the convergence rates of our proposed algorithms. Before that, we recall the following equality
	\begin{equation}\label{ds}
		2\langle a,b\rangle=\|a+b\|^2-\|a\|^2-\|b\|^2.
	\end{equation}
	For any proper, closed and lower semi-continuous convex function $f$ on $\mathcal X$, we write $f\in S^0_{\mu}(\mathcal X)$ with $\mu \geq 0$ if
	\begin{equation*}
		f(x_1)-f(x_2)-\langle {w},x_1-x_2\rangle\geq\frac{\mu}{2}\|x_1-x_2\|^2,\qquad\forall(x_1,x_2)\in \mathcal {X\times X},
	\end{equation*}
	where ${w}\in\partial f(x_2)$. The function $f$ is convex when $\mu=0$ and is $\mu$-strongly convex when $\mu>0$. Besides, we denote
	\begin{equation*}
		\mathcal{L}^{\epsilon} (x,y,{\lambda^*}):= \mathcal{L}(x,y,{\lambda^*})+\frac{{\epsilon}}{2}\left\|x\right\|^2+\frac{{\epsilon}}{2}\left\|y\right\|^2.
	\end{equation*}
	If $f\in S^0_{\mu_f}(\mathcal X)$ and $g\in S^0_{\mu_g}(\mathcal Y)$ with   ${\mu}_f, {\mu}_g\geq 0$, then we have
	\begin{equation}\label{q}
		\frac{{\mu}_f}{2}\|x_1-x_2\|^2+\frac{{\mu}_g}{2}\|y_1-y_2\|^2\leq {\mathcal{L}^\epsilon(x_1,y_1,{\lambda^*})-\mathcal{L}^\epsilon(x_2,y_2,{\lambda^*})-\langle p,x_1-x_2\rangle-\langle q, y_1-y_2\rangle},
	\end{equation}
	where $p\in\partial_x \mathcal{L}^\epsilon(x_2,y_2,{\lambda^*})$ and $q\in\partial_y \mathcal{L}^\epsilon(x_2,y_2,{\lambda^*})$.

	We also recall the lemma below, which is essential for proving the convergence rates.
	\begin{lemma}\label{A.1}\textup{\cite[Lemma 4]{9}}
		Let $\{h_k\}_{k\geq 1}$ be a sequence of vectors in $\mathbb{R}^{n}$ and $\{b_k\}_{k\geq 1}$ be a sequence in $[0,1)$. Suppose
		\[\left\|h_{k+1}+\sum_{i=1}^{k} b_{i} h_{i}\right\|\leq C, \quad\forall k\geq 1.\]
		Then,
		\[ \sup_{k\geq 1}\|h_k\|\leq +\infty.\]
	\end{lemma}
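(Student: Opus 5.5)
Note first that the conclusion as printed, $\sup_{k\geq1}\|h_k\|\leq+\infty$, is trivially true. The intended statement, as in \cite[Lemma 4]{9}, is $\sup_{k\geq 1}\|h_k\|<+\infty$, and I will prove that. The approach is an induction on a running maximum, using $b_i\in[0,1)$ so that a convex-combination-type bound is preserved.

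Set $s_k:=\sum_{i=1}^{k} b_i h_i$, so the hypothesis reads $\|h_{k+1}+s_k\|\leq C$ for all $k\geq1$. Then
\[
s_{k+1}=s_k+b_{k+1}h_{k+1}=(1-b_{k+1})s_k+b_{k+1}(h_{k+1}+s_k).
\]
Since $b_{k+1}\in[0,1)$, $s_{k+1}$ is a convex combination of $s_k$ and a vector of norm at most $C$. Hence
\[
\|s_{k+1}\|\leq(1-b_{k+1})\|s_k\|+b_{k+1}C\leq\max\{\|s_k\|,C\}.
\]
By induction, $\|s_k\|\leq M:=\max\{\|s_1\|,C\}=\max\{b_1\|h_1\|,C\}$ for every $k\geq1$.

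Finally, $\|h_{k+1}\|\leq\|h_{k+1}+s_k\|+\|s_k\|\leq C+M$ for all $k\geq1$. Together with $\|h_1\|$, this gives $\sup_{k\geq1}\|h_k\|\leq\max\{\|h_1\|,C+M\}<+\infty$.

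The only step needing care is the convex-combination rewriting of $s_{k+1}$. It is the one place where the hypothesis $b_k<1$, more precisely $b_k\leq1$, is used, and without it $\|s_k\|$ could grow geometrically. Nothing specific to $\mathbb{R}^n$ is used, so the argument holds verbatim in any normed space, in particular in the Hilbert spaces $\mathcal{X},\mathcal{Y},\mathcal{Z}$ where the lemma will later be applied.
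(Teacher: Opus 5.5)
Your proof is correct, and you are right that the conclusion should read $\sup_{k\geq1}\|h_k\|<+\infty$. Writing $s_{k+1}=(1-b_{k+1})s_k+b_{k+1}(h_{k+1}+s_k)$ gives $\|s_k\|\leq\max\{b_1\|h_1\|,C\}$ by induction, and hence $\sup_{k}\|h_k\|\leq\|h_1\|+2C$.

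The paper gives no proof of its own and simply imports the lemma from \cite[Lemma 4]{9}, so there is no argument in the paper to compare yours against. Your remark that the argument works in any normed space is worth keeping. The paper actually applies the lemma to $h_k=\beta_{k-1}(\alpha_{k-1}+\delta)(Ax_k+By_k-b)\in\mathcal{Z}$, not to vectors in $\mathbb{R}^n$, so the statement as printed would not cover that use.
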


	Throughout this paper, we introduce the following assumptions regarding the parameters of the proposed algorithms.
	\begin{assumption}\label{ass}
		Let $f\in\mathcal S_{\mu_f}^0(\mathcal X)$ with $\mu_f\geq  0$ and $g\in\mathcal S_{\mu_g}^0(\mathcal Y)$ with $\mu_g\geq  0$. $\{{\beta}_k\}_{k\geq 1}$ is a positive and nondecreasing sequence and $\lim_{k\rightarrow{+\infty}}\beta_{k}={+\infty}$. $\{{\alpha}_k\}_{k\geq 1}$ is a positive sequence. For every $k\geq1$, the parameters $\delta,\gamma$ and these two sequences satisfy
		\begin{eqnarray*}
			{\delta\gamma}-1\geq0,\qquad\delta{\beta}_{k+1}\leq\delta{\beta}_{k}+\alpha_k{\beta}_{k}.
		\end{eqnarray*}
	\end{assumption}
	\begin{assumption}\label{ass4}
		Let $f\in\mathcal S_{\mu_f}^0(\mathcal X)$ with $\mu_f\geq  0$ and $g\in\mathcal S_{\mu_g}^0(\mathcal Y)$ with $\mu_g \geq 0$. $\{{\beta}_k\}_{k\geq 1}$ is a positive and nondecreasing sequence and $\lim_{k\rightarrow{+\infty}}\beta_{k}={+\infty}$. $\{{\alpha}_k\}_{k\geq 1}$ is a positive sequence. For every $k\geq 1$, the parameters $\delta,\gamma$ and these two sequences satisfy
		\begin{eqnarray*}
			&{\delta\gamma}-1\geq 0,\qquad\delta{\beta}_{k+1}\leq\delta{\beta}_{k}+\alpha_k{\beta}_{k},\nonumber\\
			&\|B\|^2(\alpha_{k+1}^2\beta_{k+1}^2-\alpha_k^2\beta_k^2) \leq \alpha_k\beta_k\mu_g.
		\end{eqnarray*}
	\end{assumption}

	Now, we employ a unified energy function to examine sequences satisfying (\ref{disc}):
	\begin{eqnarray*}
		\mathcal{E}_k =I_k^1 +I_k^2+I_k^3+I_k^4
	\end{eqnarray*}
	with
	\begin{equation}\label{energy}
		\begin{cases}{}
			I_k^1 = {\delta}^2{\beta}_k(\mathcal{L}(x_k,y_k,\lambda^*)-\mathcal{L}(x^*,y^*,\lambda^*)+\frac{{\epsilon}_k}{2}\left\|x_k\right\|^2
			+\frac{{\epsilon}_k}{2}\left\|y_k\right\|^2), \\
			I_k^2 = \frac{1}{2}\|{{Z}^{\delta}_{k}}-x^*\|^2+\frac{1}{2}\|{{H}^{\delta}_{k}}-y^*\|^2, \\
			I_k^3 = \frac{\delta\gamma-1}{2}\|x_k-x^*\|^2+\frac{\delta\gamma-1}{2}\|y_k-y^*\|^2,  \\
			I_k^4 = \frac{\delta}{2}\|\lambda_k-\lambda^*\|^2.
		\end{cases}
	\end{equation}
	
	Clearly, $\mathcal{E}_k\geq 0$ for all $(x^*,\lambda^*)\in \mathbb S$ and $k\geq 1$. To estimate the convergence rates, we require the following lemma.
	\begin{lemma}\label{le1}
		Suppose that $f\in\mathcal S_{\mu_f}^0(\mathcal X)$ with $\mu_f\geq  0$ and $g\in\mathcal S_{\mu_g}^0(\mathcal Y)$ with $\mu_g\geq  0$. Let $ \{(x_{k},y_{k},\lambda_{k})\}_{k\geq1} $ be the sequence generated by the discretization $(\ref{disc})$ and $(x^*,y^*,{\lambda}^*)\in \Omega$. For every $k\geq1$, it holds
		\begin{equation}\label{energybds}
			\begin{aligned}
				\mathcal{E}_{k+1}-\mathcal{E}_k \leq& ({\delta}^2{\beta}_{k+1}-{\delta}^2{\beta}_{k}-{\delta}\alpha_k{\beta}_{k})(\mathcal{L}(x_{k+1},y_{k+1},\lambda^*)-\mathcal{L}(x^*,y^*,\lambda^*))\\
				&+\frac{{\delta}\alpha_k{\beta}_k{\epsilon}_k}{2}(\|x^*\|^2+\|y^*\|^2)-\frac{\delta}{2}\|{\lambda}_{k+1}-{\lambda}_{k}\|^2\\
				&+(\frac{{\delta}^2{\beta}_{k+1}{\epsilon}_{k+1}}{2}
				-\frac{{\delta}^2{\beta}_k{\epsilon}_k}{2}-\frac{{\delta}\alpha_k{\beta}_k{\epsilon}_k}{2})(\|x_{k+1}\|^2+\|y_{k+1}\|^2)\\
				&-\frac{1}{2}\|Z_{k+1}^\delta-Z_{k}^\delta\|^2-\frac{1}{2}\|H_{k+1}^\delta-H_{k}^\delta\|^2 -\frac{{\delta}^2{\beta}_k{\mu}_f}{2}\|x_{k+1}-x_k\|^2-\frac{{\delta}^2{\beta}_k{\mu}_g}{2}\|y_{k+1}-y_k\|^2\\
				&-\frac{\delta\alpha_k\beta_k\mu_f}{2}\|Z_{k+1}^\delta-x^*\|^2-\frac{\delta\alpha_k\beta_k\mu_g}{2}\|H_{k+1}^\delta-y^*\|^2\\
				&-\frac{\delta\alpha_k\beta_k\mu_f}{2}\|x_{k+1}-Z_{k+1}^\delta\|^2-\frac{\delta\alpha_k\beta_k\mu_g}{2}\|y_{k+1}-H_{k+1}^\delta\|^2\\
				&-(\delta\gamma-1)(\frac{\delta}{a_k}+\frac{1}{2})\|x_{k+1}-x_k\|^2-(\delta\gamma-1)(\frac{\delta}{a_k}+\frac{1}{2})\|y_{k+1}-y_k\|^2\\
				&-\delta\alpha_k\beta_k\langle \bar\lambda_{k+1}-\lambda^*,A(Z_{k+1}^\delta-x^*) \rangle-\delta\alpha_k\beta_k\langle \hat\lambda_{k+1}-\lambda^*,B(H_{k+1}^\delta-y^*) \rangle\\
				&+\delta\langle\lambda_{k+1}-\lambda_k,\lambda_{k+1}-\lambda^*\rangle.
			\end{aligned}
		\end{equation}
	\end{lemma}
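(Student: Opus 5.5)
We estimate the four differences $I_{k+1}^i-I_k^i$ one by one and then add them. The only tools needed are the polarization identity \eqref{ds}, the scheme \eqref{disc}, and the strong-convexity inequality \eqref{q} applied with $\epsilon=\epsilon_k$.

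\textbf{Step 1 (the terms $I^2$, $I^3$).} First express $Z^\delta_{k+1}-Z^\delta_k$ through the scheme. From $Z^\delta=\delta Z+(1-\delta\gamma)x$ we get
\[
Z^\delta_{k+1}-Z^\delta_k=\delta(Z_{k+1}-Z_k)+(1-\delta\gamma)(x_{k+1}-x_k).
\]
By \eqref{disc}, $Z_{k+1}-Z_k=-\alpha_k\beta_k\,(\xi_{k+1}+A^\top\bar\lambda_{k+1}+\epsilon_kx_{k+1}-\mu_f(x_{k+1}-Z^\delta_{k+1}))$ for some $\xi_{k+1}\in\partial f(x_{k+1})$. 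Applying \eqref{ds} gives
\[
\tfrac12\|Z^\delta_{k+1}-x^*\|^2-\tfrac12\|Z^\delta_k-x^*\|^2=\langle Z^\delta_{k+1}-Z^\delta_k,Z^\delta_{k+1}-x^*\rangle-\tfrac12\|Z^\delta_{k+1}-Z^\delta_k\|^2 .
\]
This produces the term $-\frac12\|Z^\delta_{k+1}-Z^\delta_k\|^2$, plus the cross term
\[
(1-\delta\gamma)\langle x_{k+1}-x_k,Z^\delta_{k+1}-x^*\rangle .
\]
For this cross term write $Z^\delta_{k+1}-x^*=(x_{k+1}-x^*)+\frac{\delta}{\alpha_k}(x_{k+1}-x_k)$. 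Its first piece combines, via \eqref{ds}, with $I^3_{k+1}-I^3_k$ and yields $-\frac{\delta\gamma-1}{2}\|x_{k+1}-x_k\|^2$. Its second piece gives $-(\delta\gamma-1)\frac{\delta}{\alpha_k}\|x_{k+1}-x_k\|^2$. The $y$-part is treated identically.

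\textbf{Step 2 (the subgradient inner product).} What remains is
\[
-\delta\alpha_k\beta_k\langle \xi_{k+1}+\epsilon_kx_{k+1}+A^\top\bar\lambda_{k+1}-\mu_f(x_{k+1}-Z^\delta_{k+1}),\,Z^\delta_{k+1}-x^*\rangle .
\]
Split $\bar\lambda_{k+1}=(\bar\lambda_{k+1}-\lambda^*)+\lambda^*$. The piece with $\bar\lambda_{k+1}-\lambda^*$ is kept as is in the final bound. The piece with $\lambda^*$ joins $\xi_{k+1}+\epsilon_kx_{k+1}$ to form $p\in\partial_x\mathcal L^{\epsilon_k}(x_{k+1},y_{k+1},\lambda^*)$, and similarly $q$ for $y$.

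Next decompose
\[
Z^\delta_{k+1}-x^*=(x^*-x_{k+1})\cdot(-1)+\tfrac{\delta}{\alpha_k}(x_{k+1}-x_k).
\]
On the piece $x_{k+1}-x^*$, apply \eqref{q} between $(x_{k+1},y_{k+1})$ and $(x^*,y^*)$. On the piece $x_{k+1}-x_k$, apply \eqref{q} between $(x_{k+1},y_{k+1})$ and $(x_k,y_k)$. This gives
\[
-\delta\alpha_k\beta_k\big(\mathcal L^{\epsilon_k}_{k+1}-\mathcal L^{\epsilon_k}(x^*,y^*)\big)-\delta^2\beta_k\big(\mathcal L^{\epsilon_k}_{k+1}-\mathcal L^{\epsilon_k}_k\big),
\]
together with the negative terms $-\frac{\delta^2\beta_k\mu_f}{2}\|x_{k+1}-x_k\|^2$.

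The $\mu_f$ term is handled with \eqref{ds} in the form
\[
2\langle x_{k+1}-Z^\delta_{k+1},Z^\delta_{k+1}-x^*\rangle=\|x_{k+1}-x^*\|^2-\|x_{k+1}-Z^\delta\|^2-\|Z^\delta-x^*\|^2 .
\]
The resulting $+\frac{\delta\alpha_k\beta_k\mu_f}{2}\|x_{k+1}-x^*\|^2$ is exactly cancelled by the strong-convexity gain from \eqref{q} on the $x^*$ piece. We must check that the sign bookkeeping works here. What survives is the pair of terms $-\frac{\delta\alpha_k\beta_k\mu_f}{2}(\cdots)$ listed in the statement.

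\textbf{Step 3 ($I^1$ and $I^4$, assembly).} Add $I^1_{k+1}-I^1_k=\delta^2\beta_{k+1}\mathcal L^{\epsilon_{k+1}}_{k+1}-\delta^2\beta_k\mathcal L^{\epsilon_k}_k$, written with $\mathcal L$ measured relative to $\mathcal L(x^*,y^*,\lambda^*)$. The $\delta^2\beta_k\mathcal L^{\epsilon_k}_k$ terms cancel. Collecting coefficients of $\mathcal L(x_{k+1},y_{k+1},\lambda^*)-\mathcal L^*$ gives $\delta^2\beta_{k+1}-\delta^2\beta_k-\delta\alpha_k\beta_k$, and collecting the $\|x_{k+1}\|^2$ terms gives the stated $\epsilon$ coefficient. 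The term $\frac{\delta\alpha_k\beta_k\epsilon_k}{2}\|x^*\|^2$ comes from $\mathcal L^{\epsilon_k}(x^*,y^*)$.

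Finally,
\[
I^4_{k+1}-I^4_k=\delta\langle\lambda_{k+1}-\lambda_k,\lambda_{k+1}-\lambda^*\rangle-\frac{\delta}{2}\|\lambda_{k+1}-\lambda_k\|^2
\]
by \eqref{ds}. Summing all contributions gives \eqref{energybds}.

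The main obstacle is Step 2: the $\mu_f$ bookkeeping, where the $\|x_{k+1}-x^*\|^2$ terms must cancel precisely. I would verify this first.
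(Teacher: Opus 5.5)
Your proposal is correct and follows essentially the same route as the paper. The shared steps are:
\begin{itemize}
\item polarization on $I^2$, producing $\delta\langle Z_{k+1}-Z_k,Z^\delta_{k+1}-x^*\rangle$ and $(1-\delta\gamma)\langle x_{k+1}-x_k,Z^\delta_{k+1}-x^*\rangle$;
\item the splitting $Z^\delta_{k+1}-x^*=(x_{k+1}-x^*)+\frac{\delta}{\alpha_k}(x_{k+1}-x_k)$;
\item two applications of \eqref{q} with $\epsilon=\epsilon_k$, against $(x^*,y^*)$ and against $(x_k,y_k)$, using the subgradient the paper calls $M_k$, $N_k$;
\item polarization of the $\mu_f$ cross term, then adding $I^1$, $I^3$ and $I^4$.
\end{itemize}
The $\mu_f$ bookkeeping you flagged as the main risk does close. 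The term $+\frac{\delta\alpha_k\beta_k\mu_f}{2}\|x_{k+1}-x^*\|^2$ produced by polarization is cancelled exactly by the $-\frac{\delta\alpha_k\beta_k\mu_f}{2}\|x_{k+1}-x^*\|^2$ that \eqref{q} yields on the $x^*$ piece. The same cancellation happens for $g$ with $\mu_g$.
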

	\begin{proof}
		By the definition of $I^1_k$, we have
		\begin{equation}\label{energybds1}
			\begin{aligned}
				I_{k+1}^1-I_k^1 =& {\delta}^2{\beta}_{k+1}(\mathcal{L}(x_{k+1},y_{k+1},\lambda^*)-\mathcal{L}(x^*,y^*,\lambda^*))\\
				&+\frac{{\delta}^2{\beta}_{k+1}{\epsilon}_{k+1}}{2}(\|x_{k+1}\|^2+\|y_{k+1}\|^2)
				-\frac{{\delta}^2{\beta}_{k}{\epsilon}_{k}}{2}(\|x_{k}\|^2+\|y_{k}\|^2)\\
				&-{\delta}^2{\beta}_{k}(\mathcal{L}(x_{k},y_{k},\lambda^*)-\mathcal{L}(x^*,y^*,\lambda^*)).
			\end{aligned}
		\end{equation}
		From (\ref{disc}), we can derive
		\begin{align*}
			\frac{{Z}_{k+1}-{Z}_{k}}{{\alpha_k\beta}_k}&\in -\partial f(x_{k+1})-A^\top \bar\lambda_{k+1}-{\epsilon}_{k}x_{k+1}
			+\mu_f(x_{k+1}-Z_{k+1}^\delta)\\
			&=-\partial_x L^{{\epsilon}_{k}}(x_{k+1},y_{k+1},\lambda^*)-A^\top(\bar{\lambda}_{k+1}-\lambda^*)
			+\mu_f(x_{k+1}-Z_{k+1}^\delta),
		\end{align*}
		and
		\begin{align*}
			\frac{{H}_{k+1}-{H}_{k}}{\alpha_k{\beta}_k}&\in -\partial g(y_{k+1})-B^\top \hat\lambda_{k+1}-{\epsilon}_{k}y_{k+1}
			+\mu_g(y_{k+1}-H_{k+1}^\delta)\\
			&=-\partial_y L^{\epsilon_k}(x_{k+1},y_{k+1},\lambda^*)-B^\top(\hat{\lambda}_{k+1}-\lambda^*)+\mu_g(y_{k+1}-H_{k+1}^\delta).
		\end{align*}
		Denote
		\begin{equation*}\label{M}
			M_k=-A^\top(\bar{\lambda}_{k+1}-\lambda^*)-\frac{{Z}_{k+1}-{Z}_{k}}{\alpha_k{\beta}_k}+\mu_f(x_{k+1}-Z_{k+1}^\delta) \in \partial_x \mathcal{L}^{\epsilon_k}(x_{k+1},y_{k+1},\lambda^*),
		\end{equation*}
		and
		\begin{equation*}\label{N}
			N_k=-B^\top(\hat{\lambda}_{k+1}-\lambda^*)-\frac{{H}_{k+1}-{H}_{k}}{\alpha_k{\beta}_k}+\mu_g(y_{k+1}-H_{k+1}^\delta) \in \partial_y \mathcal{L}^{\epsilon_k}(x_{k+1},y_{k+1},\lambda^*).
		\end{equation*}
		From (\ref{ds}), we get
		\begin{align*}
			I_{k+1}^2-I_k^2 =& \frac{1}{2}\|{{Z}^{\delta}_{k+1}}-x^*\|^2-\frac{1}{2}\|{{Z}^{\delta}_{k}}-x^*\|^2+\frac{1}{2}\|{{H}^{\delta}_{k+1}}-y^*\|^2
			-\frac{1}{2}\|{{H}^{\delta}_{k}}-y^*\|^2\nonumber\\
			=&{\delta}\langle Z_{k+1}-Z_k,{Z}^{\delta}_{k+1}-x^*\rangle+(1-\delta\gamma)\langle x_{k+1}-x_k,{Z}^{\delta}_{k+1}-x^*\rangle\nonumber\\
			&+{\delta}\langle H_{k+1}-H_k,{H}^{\delta}_{k+1}-x^*\rangle+(1-\delta\gamma)\langle y_{k+1}-y_k,{H}^{\delta}_{k+1}-y^*\rangle\nonumber\\
			&-\frac{1}{2}\|Z_{k+1}^\delta-Z_{k}^\delta\|^2-\frac{1}{2}\|H_{k+1}^\delta-H_{k}^\delta\|^2.
		\end{align*}
		In details, we know that
		\begin{equation}\label{energybds2.1}
			\begin{aligned}
				&{\delta}\langle Z_{k+1}-Z_k,{Z}^{\delta}_{k+1}-x^*\rangle + {\delta}\langle H_{k+1}-H_k,{H}^{\delta}_{k+1}-x^*\rangle\\
				=& -\delta\alpha_k\beta_k\langle \bar\lambda_{k+1}-\lambda^*,A(Z_{k+1}^\delta-x^*) \rangle
				-\delta\alpha_k\beta_k\langle \hat\lambda_{k+1}-\lambda^*,B(H_{k+1}^\delta-y^*) \rangle\\
				&+\delta\alpha_k\beta_k\mu_f\langle x_{k+1}-Z_{k+1}^\delta,Z_{k+1}^\delta-x^* \rangle +\delta\alpha_k\beta_k\mu_g\langle y_{k+1}-H_{k+1}^\delta,H_{k+1}^\delta-y^* \rangle \\
				&-\delta\alpha_k{\beta}_k\langle M_k, x_{k+1}-x^*\rangle-{\delta}^2{\beta}_k\langle M_k, x_{k+1}-x_{k}\rangle\\ &-\delta\alpha_k{\beta}_k\langle N_k, y_{k+1}-y^*\rangle-{\delta}^2{\beta}_k\langle N_k, y_{k+1}-y_{k}\rangle\\
				=&-\delta\alpha_k\beta_k\langle \bar\lambda_{k+1}-\lambda^*,A(Z_{k+1}^\delta-x^*) \rangle
				-\delta\alpha_k\beta_k\langle \hat\lambda_{k+1}-\lambda^*,B(H_{k+1}^\delta-y^*) \rangle\\
				&-\delta\alpha_k{\beta}_k\langle M_k, x_{k+1}-x^*\rangle-{\delta}^2{\beta}_k\langle M_k, x_{k+1}-x_{k}\rangle\\ &-\delta\alpha_k{\beta}_k\langle N_k, y_{k+1}-y^*\rangle-{\delta}^2{\beta}_k\langle N_k, y_{k+1}-y_{k}\rangle\\
				&+\frac{\delta\alpha_k\beta_k\mu_f}{2}\|x_{k+1}-x^*\|^2-\frac{\delta\alpha_k\beta_k\mu_f}{2}\|Z_{k+1}^\delta-x^*\|^2\\
				&-\frac{\delta\alpha_k\beta_k\mu_f}{2}\|x_{k+1}-Z_{k+1}^\delta\|^2
				+\frac{\delta\alpha_k\beta_k\mu_g}{2}\|y_{k+1}-y^*\|^2\\
				&-\frac{\delta\alpha_k\beta_k\mu_g}{2}\|H_{k+1}^\delta-y^*\|^2-\frac{\delta\alpha_k\beta_k\mu_g}{2}\|y_{k+1}-H_{k+1}^\delta\|^2,
			\end{aligned}
		\end{equation}
		and
		\begin{equation}\label{energybds2.2}
			\begin{aligned}
				&(1-\delta\gamma)\langle x_{k+1}-x_k,{Z}^{\delta}_{k+1}-x^*\rangle+(1-\delta\gamma)\langle y_{k+1}-y_k,{H}^{\delta}_{k+1}-y^*\rangle\\
				=&(1-\delta\gamma)\langle x_{k+1}-x_k,x_{k+1}-x^*\rangle + \frac{(1-\delta\gamma)\delta}{\alpha_k}\langle x_{k+1}-x_k,x_{k+1}-x_k\rangle\\
				&+(1-\delta\gamma)\langle y_{k+1}-y_k,y_{k+1}-y^*\rangle + \frac{(1-\delta\gamma)\delta}{\alpha_k}\langle y_{k+1}-y_k,y_{k+1}-y_k\rangle\\
				=&-\frac{\delta\gamma-1}{2}(\|x_{k+1}-x^*\|^2-\|x_k-x^*\|^2)-(\delta\gamma-1)(\frac{\delta}{\alpha_k}+\frac{1}{2})\|x_{k+1}-x_k\|^2\\
				&-\frac{\delta\gamma-1}{2}(\|y_{k+1}-y^*\|^2-\|y_k-y^*\|^2)-(\delta\gamma-1)(\frac{\delta}{\alpha_k}+\frac{1}{2})\|y_{k+1}-y_k\|^2.
			\end{aligned}
		\end{equation}
		Further, using the equation $(\ref{q})$, we have
		\begin{equation}\label{energybds2.3}
			\begin{aligned}
				&-\delta\alpha_k{\beta}_k\langle M_k,x_{k+1}-x^*\rangle-\delta\alpha_k{\beta}_k\langle N_k,y_{k+1}-y^*\rangle\\
				\leq&\delta\alpha_k{\beta}_k(\mathcal{L}^{\epsilon_k}(x^*,y^*,\lambda^*)
				-\mathcal{L}^{\epsilon_k}(x_{k+1},y_{k+1},\lambda^*)-\frac{\mu_f}{2}\|x_{k+1}-x^*\|^2-\frac{\mu_g}{2}\|y_{k+1}-y^*\|^2),
			\end{aligned}
		\end{equation}
		and
		\begin{equation}\label{energybds2.4}
			\begin{aligned}
				&-\delta^2{\beta}_k\langle M_k,x_{k+1}-x_k\rangle-\delta^2{\beta}_k\langle N_k,y_{k+1}-y_k\rangle\\
				\leq&\delta^2{\beta}_k(\mathcal{L}^{\epsilon_k}(x_k,y_k,\lambda^*)-\mathcal{L}^{\epsilon_k}(x_{k+1},y_{k+1},\lambda^*)
				-\frac{\mu_f}{2}\|x_{k+1}-x_k\|^2-\frac{\mu_g}{2}\|y_{k+1}-y_k\|^2),
			\end{aligned}
		\end{equation}
		where $\mu_f \geq 0$ and $\mu_g \geq 0$. Combining (\ref{energybds2.1}) , (\ref{energybds2.2}) , (\ref{energybds2.3}) and (\ref{energybds2.4}), it holds
		\begin{equation}\label{energybds2z}
			\begin{aligned}
				&I_{k+1}^2-I_k^2 \\
				\leq&
				-\delta\alpha_k\beta_k\langle \bar\lambda_{k+1}-\lambda^*,A(Z_{k+1}^\delta-x^*) \rangle
				-\delta\alpha_k\beta_k\langle \hat\lambda_{k+1}-\lambda^*,B(H_{k+1}^\delta-y^*) \rangle\\
				&-\frac{\delta\gamma-1}{2}(\|x_{k+1}-x^*\|^2-\|x_k-x^*\|^2)-(\delta\gamma-1)(\frac{\delta}{\alpha_k}+\frac{1}{2})\|x_{k+1}-x_k\|^2\\
				&-\frac{\delta\gamma-1}{2}(\|y_{k+1}-y^*\|^2-\|y_k-y^*\|^2)-(\delta\gamma-1)(\frac{\delta}{\alpha_k}+\frac{1}{2})\|y_{k+1}-y_k\|^2\\
				&+\delta\alpha_k{\beta}_k(\mathcal{L}^{\epsilon_k}(x^*,y^*,\lambda^*)
				-\mathcal{L}^{\epsilon_k}(x_{k+1},y_{k+1},\lambda^*))\\
				&+\delta^2{\beta}_k(\mathcal{L}^{\epsilon_k}(x_k,y_k,\lambda^*)-\mathcal{L}^{\epsilon_k}(x_{k+1},y_{k+1},\lambda^*))\\
				&-\frac{1}{2}\|Z_{k+1}^\delta-Z_{k}^\delta\|^2-\frac{1}{2}\|H_{k+1}^\delta-H_{k}^\delta\|^2-\frac{{\delta}^2{\beta}_k{\mu}_f}{2}\|x_{k+1}-x_k\|^2\\
				&-\frac{{\delta}^2{\beta}_k{\mu}_g}{2}\|y_{k+1}-y_k\|^2-\frac{\delta\alpha_k\beta_k\mu_f}{2}\|Z_{k+1}^\delta-x^*\|^2-\frac{\delta\alpha_k\beta_k\mu_g}{2}\|H_{k+1}^\delta-y^*\|^2\\
				&-\frac{\delta\alpha_k\beta_k\mu_f}{2}\|x_{k+1}-Z_{k+1}^\delta\|^2-\frac{\delta\alpha_k\beta_k\mu_g}{2}\|y_{k+1}-H_{k+1}^\delta\|^2,
			\end{aligned}
		\end{equation}
		Then we can obtain
		\begin{equation}\label{energybds3}
			\begin{aligned}
				I_{k+1}^3-I_k^3 =& \frac{\delta\gamma-1}{2}\|{x}_{k+1}-x^*\|^2-\frac{\delta\gamma-1}{2}\|{x}_{k}-x^*\|^2 \\
				&+\frac{\delta\gamma-1}{2}\|{y}_{k+1}-y^*\|^2-\frac{\delta\gamma-1}{2}\|{y}_{k}-y^*\|^2.
			\end{aligned}
		\end{equation}
		We deduce from (\ref{ds}) that
		\begin{equation}\label{energybds4}
			\begin{aligned}
				I_{k+1}^4-I_k^4 =& \frac{\delta}{2}\|{\lambda}_{k+1}-\lambda^*\|^2-\frac{\delta}{2}\|{\lambda}_{k}-\lambda^*\|^2 \\
				=&\delta\langle {\lambda}_{k+1}-{\lambda}_{k},{\lambda}_{k+1}-\lambda^*\rangle-\frac{\delta}{2}\|{\lambda}_{k+1}-{\lambda}_{k}\|^2.
			\end{aligned}
		\end{equation}
		By summing (\ref{energybds1}), (\ref{energybds2z}), (\ref{energybds3}) and (\ref{energybds4}), we complete the proof of this lemma.
	\end{proof}

	From the unified estimate (\ref{energybds}), it is evident that setting $\hat{\lambda}_{k+1} = \bar{\lambda}_{k+1} = {\lambda}_{k+1}$ ensures a contraction.
	\begin{theorem}\label{th3.1}
		Suppose that Assumption \ref{ass} holds. Let $\{(x_k,y_k,\lambda_k)\}_{k\geq 1}$ be the sequence generated by Algorithm \ref{al_1} and  $(x^*,y^*,\lambda^*)\in\Omega$.  Assume that $ \{\epsilon_{k}\}_{k\geq1} $ is a nonincreasing sequence and $\sum_{k=1}^{+\infty}\alpha_k\beta_{k}\epsilon_{k}<{+\infty}$, then we have the sequence $ \{(x_{k},y_k,\lambda_k)\}_{k\geq1} $ is bounded and the following statements:
		\begin{eqnarray*}
			\begin{aligned}
				&\mathcal{L} ( x_k, y_k,\lambda ^*)-\mathcal{L} ( x^*,y^*,\lambda ^* )=\mathcal{O}\left( \frac{1}{\beta_k} \right),\\
				|\varPhi ( x_k,y_k)-&\varPhi( x^*,y^*)|=\mathcal{O} \left( \frac{1}{\beta_k} \right),\,\,\,\ \left \| Ax_k+B y_k-b\right \|=\mathcal{O}\left ( \frac{1}{\beta_k}\right ).\\
			\end{aligned}
		\end{eqnarray*}
	\end{theorem}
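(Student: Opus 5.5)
We will start from Lemma \ref{le1} with the implicit choice $\bar\lambda_{k+1}=\hat\lambda_{k+1}=\lambda_{k+1}$ from Proposition \ref{pro1}. The key observation is that, by the multiplier update in (\ref{disc}) and $Ax^*+By^*=b$,
\[
\delta\alpha_k\beta_k\langle\lambda_{k+1}-\lambda^*,A(Z^\delta_{k+1}-x^*)+B(H^\delta_{k+1}-y^*)\rangle=\delta\langle\lambda_{k+1}-\lambda^*,\lambda_{k+1}-\lambda_k\rangle .
\]
Hence the two cross terms in (\ref{energybds}) cancel exactly against the last term. The first coefficient $\delta^2\beta_{k+1}-\delta^2\beta_k-\delta\alpha_k\beta_k$ is $\le 0$ by Assumption \ref{ass}, and $\mathcal L(x_{k+1},y_{k+1},\lambda^*)-\mathcal L(x^*,y^*,\lambda^*)\ge0$ at a saddle point, so this term is nonpositive. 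For the $\epsilon$-term we use that $\epsilon_k$ is nonincreasing: $\delta^2\beta_{k+1}\epsilon_{k+1}\le\delta^2\beta_{k+1}\epsilon_k\le(\delta^2\beta_k+\delta\alpha_k\beta_k)\epsilon_k$, so its coefficient is also $\le0$. All remaining terms are nonpositive because $\delta\gamma\ge1$ and $\mu_f,\mu_g\ge0$. This gives
\[
\mathcal E_{k+1}\le\mathcal E_k+\frac{\delta\alpha_k\beta_k\epsilon_k}{2}(\|x^*\|^2+\|y^*\|^2),
\]
and summing with $\sum\alpha_k\beta_k\epsilon_k<+\infty$ yields $\sup_k\mathcal E_k\le C<+\infty$.

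From boundedness of $\mathcal E_k$ we read off the first rate: $I^1_k\le C$ gives $\mathcal L(x_k,y_k,\lambda^*)-\mathcal L(x^*,y^*,\lambda^*)\le C/(\delta^2\beta_k)$, since the $\epsilon$-terms are nonnegative. Boundedness of $\lambda_k$ follows from $I^4_k$, and boundedness of $Z^\delta_k,H^\delta_k$ from $I^2_k$. If $\delta\gamma>1$, $I^3_k$ bounds $x_k,y_k$ directly. In general (including $\delta\gamma=1$) we use $Z^\delta_k=x_k+\frac{\delta}{\alpha_{k-1}}(x_k-x_{k-1})$, so $x_k$ is a convex combination of $x_{k-1}$ and the bounded $Z^\delta_k$, with weight $\frac{\alpha_{k-1}}{\alpha_{k-1}+\delta}$. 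A simple induction, $\|x_k-x^*\|\le\max\{\|x_{k-1}-x^*\|,\sup\|Z^\delta_j-x^*\|\}$, then bounds $x_k$; the same works for $y_k$.

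The main obstacle is the feasibility rate $\mathcal O(1/\beta_k)$. Our plan is to telescope the multiplier update. Write $r_k=Ax_k+By_k-b$; then $AZ^\delta_{k+1}+BH^\delta_{k+1}-b=r_{k+1}+\frac{\delta}{\alpha_k}(r_{k+1}-r_k)$. This gives
\[
\lambda_{k+1}-\lambda_k=(\alpha_k+\delta)\beta_k r_{k+1}-\delta\beta_k r_k .
\]
Using $\delta\beta_{k+1}\le(\delta+\alpha_k)\beta_k$, we write this as $\delta\beta_{k+1}r_{k+1}-\delta\beta_kr_k+((\alpha_k+\delta)\beta_k-\delta\beta_{k+1})r_{k+1}$. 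Summing from $1$ to $k$ then expresses $\delta\beta_{k+1}r_{k+1}+\sum_{i=1}^k c_i r_{i+1}$, with $c_i\ge0$, as the bounded quantity $\lambda_{k+1}-\lambda_1+\delta\beta_1r_1$.

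To apply Lemma \ref{A.1}, we set $h_k=\delta\beta_kr_k$ and $b_i=c_i/(\delta\beta_{i+1})=\frac{(\alpha_i+\delta)\beta_i}{\delta\beta_{i+1}}-1$. Since $\beta$ is nondecreasing, $b_i\le\alpha_i/\delta$, which is not automatically $<1$; this is the delicate point. If $b_i\ge1$ can occur, we fall back on the explicit argument behind Lemma \ref{A.1}. Setting $u_{k+1}=h_{k+1}+\sum_{i\le k}b_ih_{i+1}$, we have $u_{k+1}-u_k=(1+b_k)h_{k+1}$, so $h_{k+1}=(u_{k+1}-u_k)/(1+b_k)$ and $\|h_{k+1}\|\le 2C$ directly from the boundedness of $u$. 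This gives $\|r_k\|=\mathcal O(1/\beta_k)$ without needing $b_i<1$.

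Finally, $\varPhi(x_k,y_k)-\varPhi(x^*,y^*)=\mathcal L(x_k,y_k,\lambda^*)-\mathcal L(x^*,y^*,\lambda^*)-\langle\lambda^*,r_k\rangle$. The first term is $\mathcal O(1/\beta_k)$, and the second satisfies $|\langle\lambda^*,r_k\rangle|\le\|\lambda^*\|\|r_k\|=\mathcal O(1/\beta_k)$, which gives the two-sided bound on the objective residual.
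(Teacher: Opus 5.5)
Your energy argument (cancellation of the multiplier cross terms, the sign of the $\epsilon$-coefficient, summation to get $\sup_k\mathcal E_k<+\infty$) is sound and matches the paper. Your convex-combination bound on $x_k,y_k$ is actually an improvement: the paper divides by $\delta\gamma-1$ and so does not cover the admissible case $\delta\gamma=1$.

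The gap is in the step you identify as the main obstacle, the feasibility rate. With $u_{k+1}=h_{k+1}+\sum_{i=1}^k b_ih_{i+1}$ and $u_k=h_k+\sum_{i=1}^{k-1}b_ih_{i+1}$, one gets
\[
u_{k+1}-u_k=(1+b_k)h_{k+1}-h_k,
\]
not $(1+b_k)h_{k+1}$. The dropped $-h_k$ is exactly the difficulty. The correct identity alone only gives $\|h_{k+1}\|\le(\|h_k\|+2C)/(1+b_k)$, which does not rule out linear growth when $b_k$ is small (e.g.\ $b_k=0$). So $\|h_{k+1}\|\le 2C$ does not follow from your argument as written. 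Note also that your relation does not have the form required by Lemma \ref{A.1} even when $b_i<1$, because your sum runs over $h_{i+1}$ and therefore contains $h_{k+1}$ itself.

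The step can be repaired in either of two ways.

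\begin{itemize}
\item \textbf{The paper's normalization.} Take $h_k=(\alpha_{k-1}+\delta)\beta_{k-1}r_k$ and $b_k=1-\frac{\delta\beta_k}{(\alpha_{k-1}+\delta)\beta_{k-1}}$. Then $\delta\beta_ir_i=(1-b_i)h_i$, and the telescoped multiplier update reads exactly
\[
\lambda_{k+1}-\lambda_1=h_{k+1}-h_1+\sum_{i=1}^kb_ih_i,
\]
with $b_i\in[0,1)$; this is where $\delta\beta_{k+1}\le(\delta+\alpha_k)\beta_k$ is used. Lemma \ref{A.1} bounds $h_k$, and then $\|r_k\|\le\|h_k\|/((\alpha_{k-1}+\delta)\beta_{k-1})\le\|h_k\|/(\delta\beta_k)$.
\item \textbf{Your normalization, via partial sums.} Set $s_k=\sum_{i=1}^kb_ih_{i+1}$ with $s_0=0$. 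From $h_{k+1}=u_{k+1}-s_k$ and $s_k-s_{k-1}=b_kh_{k+1}$ you get
\[
s_k=\frac{1}{1+b_k}\,s_{k-1}+\frac{b_k}{1+b_k}\,u_{k+1},
\]
which is a convex combination. By induction $\|s_k\|\le C$, hence $\|h_{k+1}\|\le\|u_{k+1}\|+\|s_k\|\le 2C$, for arbitrary $b_i\ge 0$.
\end{itemize}

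With either repair, the rest of your proof, including the two-sided bound on $\varPhi$, goes through.
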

	\begin{proof}
		From Step 2 of Algorithm \ref{al_1} that
		\begin{align*}
			&\delta\alpha_k\beta_k\langle \lambda_{k+1}-\lambda^*,A(Z_{k+1}^\delta-x^*) \rangle+\delta\alpha_k\beta_k\langle \lambda_{k+1}-\lambda^*,B(H_{k+1}^\delta-y^*) \rangle\nonumber\\
			=&\delta\langle\lambda_{k+1}-\lambda_k,\lambda_{k+1}-\lambda^*\rangle.
		\end{align*}
		As before, we calculate $\mathcal{E}_{k+1}-\mathcal{E}_k$ in (\ref{energybds}), then we have
		\begin{equation}\label{energyt1.1}
			\begin{aligned}
				\mathcal{E}_{k+1}-\mathcal{E}_k \leq& ({\delta}^2{\beta}_{k+1}-{\delta}^2{\beta}_{k}-{\delta}\alpha_k{\beta}_{k})(\mathcal{L}(x_{k+1},y_{k+1},\lambda^*)-\mathcal{L}(x^*,y^*,\lambda^*))\\
				&+\frac{{\delta}\alpha_k{\beta}_k{\epsilon}_k}{2}(\|x^*\|^2+\|y^*\|^2)-\frac{\delta}{2}\|{\lambda}_{k+1}-{\lambda}_{k}\|^2\\
				&+(\frac{{\delta}^2{\beta}_{k+1}{\epsilon}_{k+1}}{2}
				-\frac{{\delta}^2{\beta}_k{\epsilon}_k}{2}-\frac{{\delta}\alpha_k{\beta}_k{\epsilon}_k}{2})(\|x_{k+1}\|^2+\|y_{k+1}\|^2)\\
				&-\frac{1}{2}\|Z_{k+1}^\delta-Z_{k}^\delta\|^2-\frac{1}{2}\|H_{k+1}^\delta-H_{k}^\delta\|^2\\
				&-(\delta\gamma-1)(\frac{\delta}{a_k}+\frac{1}{2})\|x_{k+1}-x_k\|^2-(\delta\gamma-1)(\frac{\delta}{a_k}+\frac{1}{2})\|y_{k+1}-y_k\|^2\\
				&-\frac{{\delta}^2{\beta}_k{\mu}_f}{2}\|x_{k+1}-x_k\|^2-\frac{{\delta}^2{\beta}_k{\mu}_g}{2}\|y_{k+1}-y_k\|^2\\
				&-\frac{\delta\alpha_k\beta_k\mu_f}{2}\|Z_{k+1}^\delta-x^*\|^2-\frac{\delta\alpha_k\beta_k\mu_g}{2}\|H_{k+1}^\delta-y^*\|^2\\
				&-\frac{\delta\alpha_k\beta_k\mu_f}{2}\|x_{k+1}-Z_{k+1}^\delta\|^2-\frac{\delta\alpha_k\beta_k\mu_g}{2}\|y_{k+1}-H_{k+1}^\delta\|^2.
			\end{aligned}
		\end{equation}
		By Assumption \ref{ass} and $\{\epsilon_{k}\}_{k\geq1}$ is a nonincreasing sequence, we get
		\begin{eqnarray*}
			{{\delta}^2{\beta}_{k+1}{\epsilon}_{k+1}}
			-{{\delta}^2{\beta}_k{\epsilon}_k}-{{\delta}\alpha_k{\beta}_k{\epsilon}_k}\leq ({\delta}^2{\beta}_{k+1}-{\delta}^2{\beta}_{k}-{\delta}\alpha_k{\beta}_{k})\epsilon_k\leq 0.
		\end{eqnarray*}
		It follows from (\ref{energyt1.1}) that
		\begin{eqnarray}\label{energyt1.3}
			\mathcal{E}_{k+1}-\mathcal{E}_k \leq \frac{{\delta}\alpha_k{\beta}_k{\epsilon}_k}{2}(\|x^*\|^2+\|y^*\|^2), \forall{k\geq1},
		\end{eqnarray}
		which together with $\sum_{k=1}^{+\infty}\alpha_k\beta_{k}\epsilon_{k}<{+\infty}$ leads to
		\begin{eqnarray*}
			\mathcal{E}_{k}\leq \mathcal{E}_1 + C_1, \forall{k\geq1},
		\end{eqnarray*}
		where $C_1$ is a positive constant. Again using (\ref{energy}), it follows that
		\begin{align*}
			\mathcal{L}(x_k,&y_k,\lambda^*)-\mathcal{L}(x^*,y^*,\lambda^*)\leq \frac{\mathcal{E}_1+C_1}{\delta^2\beta_k},\,\,\,\|\lambda_{k}-\lambda^*\|^2\leq\frac{2(\mathcal{E}_1+C_1)}{\delta},\nonumber\\
			&\|x_k-x^*\|^2\leq \frac{2(\mathcal{E}_1+C_1)}{\delta\gamma-1},\,\,\,\|y_k-y^*\|^2\leq \frac{2(\mathcal{E}_1+C_1)}{\delta\gamma-1}.
		\end{align*}
		Thus, the sequence $ \{(x_{k},y_k,\lambda_k)\}_{k\geq1} $ is bounded and
		\begin{eqnarray*}
			\mathcal{L} ( x_k, y_k,\lambda ^*)-\mathcal{L} ( x^*,y^*,\lambda ^* )=\mathcal{O}\left( \frac{1}{\beta_k} \right).
		\end{eqnarray*}
		By Step 2 of Algorithm $\ref{al_1}$, we get
		\begin{align*}
			\lambda_{k+1}-\lambda_1 &= \sum_{i=1}^{k}(\lambda_{i+1}-\lambda_i)\nonumber\\
			&=\sum_{i=1}^{k}\alpha_i{\beta}_i(A{{Z}^{\delta}_{i+1}}+B{{H}^{\delta}_{i+1}}-b)\nonumber\\
			&=\sum_{i=1}^{k}\alpha_i{\beta}_i(1+\frac{\delta}{\alpha_i})(Ax_{i+1}+By_{i+1}-b)-\delta{\beta}_i(Ax_i+By_i-b).
		\end{align*}
		For notation simplicity, denote $h_k:=\beta_{k-1}(\alpha_{k-1}+\delta)(Ax_k+By_k-b)$ and $b_k:=1-\frac{\delta\beta_k}{(\alpha_{k-1}+\delta)\beta_{k-1}}$. It is easy to know that $b_k\in[0,1)$. Then it yields
		\begin{align*}
			\lambda_{k+1}-\lambda_1 &= \sum_{i=1}^{k}h_{i+1}-\frac{\delta\beta_i}{(\alpha_{i-1}+\delta)\beta_{i-1}}h_i\nonumber\\
			&=\sum_{i=1}^{k}h_{i+1}-h_i+\sum_{i=1}^{k}b_i h_i\nonumber\\
			&=h_{k+1}-h_1+\sum_{i=1}^{k}b_i h_i.
		\end{align*}
		This together with the boundedness of $\{\lambda_k\}_{k\geq1}$ yields
		\begin{eqnarray*}
			\|h_{k+1}+\sum_{i=1}^{k}b_ih_i\|=\|\lambda_{k+1}-\lambda_1\|+\|h_1\|\leq C_2, \forall k\geq1,
		\end{eqnarray*}
		where $C_2$ is a nonnegative constant. Applying Lemma \ref{A.1}, we have
		\begin{eqnarray*}
			\|Ax_k+By_k-b\|=\mathcal{O}\left( \frac{1}{\beta_k} \right).
		\end{eqnarray*}
		Therefore, we obtain
		\begin{eqnarray*}
			|{~\varPhi (x_k,y_k)}-{~\varPhi (x^*,y^*)}|\leq \mathcal{L}(x_k,y_k,\lambda^*)-\mathcal{L}(x^*,y^*,\lambda^*)+\|\lambda^*\|\|Ax_k+By_k-b\|.
		\end{eqnarray*}
		Consequently, this gives
		\begin{eqnarray*}\label{energyt1.14}
			|{~\varPhi (x_k,y_k)}-{~\varPhi (x^*,y^*)}|=\mathcal{O}\left( \frac{1}{\beta_k} \right).
		\end{eqnarray*}
		which completes the proof of this theorem.
	\end{proof}
	\begin{remark}
		The conditions in Assumption \ref{ass} correspond to the continuous case \cite{35}. When the objective functions are convex, although our dynamical system is based on the unaugmented Lagrangian function, applying a proof similar to that in \cite{35} can lead to a same assumption. This ensures that the implicit choice of multipliers inherits the core properties of the continuous dynamical system. Furthermore, both the objective residual and the feasibility violation of our algorithm converge at a rate of $O(1/\beta_k)$, which is faster than the corresponding $O(1/\sqrt{\beta(t)})$ rate in \cite{35}.
	\end{remark}

	Below, we will provide the convergence properties of Algorithm \ref{al_3}.
	\begin{theorem}\label{th3.4}
		Suppose that Assumption \ref{ass4} holds. Let $\{(x_k,y_k,\lambda_k)\}_{k\geq 1}$ be the sequence generated by Algorithm \ref{al_3} and  $(x^*,y^*,\lambda^*)\in\Omega$.  Assume that $ \{\epsilon_{k}\}_{k\geq1} $ is a nonincreasing sequence and $\sum_{k=1}^{+\infty}\alpha_k\beta_{k}\epsilon_{k}<{+\infty}$, then we have the sequence $ \{(x_{k},y_k,\lambda_k)\}_{k\geq1} $ is bounded and the following statements:
		\begin{eqnarray*}
			\begin{aligned}
				&\mathcal{L} ( x_k, y_k,\lambda ^*)-\mathcal{L} ( x^*,y^*,\lambda ^* )=\mathcal{O}\left( \frac{1}{\beta_k} \right),\\
				\| \varPhi ( x_k,y_k)-&\varPhi ( x^*,y^*)\|=\mathcal{O} \left( \frac{1}{\beta_k} \right),\,\,\,\left \| Ax_k+B y_k-b\right \|=\mathcal{O}\left ( \frac{1}{\beta_k}\right ).\\
			\end{aligned}
		\end{eqnarray*}
	\end{theorem}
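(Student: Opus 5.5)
We follow the proof of Theorem \ref{th3.1}. The only new difficulty is that the $x$-subproblem uses the semi-implicit multiplier $\bar\lambda_{k+1}$ instead of $\lambda_{k+1}$. So after applying Lemma \ref{le1} with $\hat\lambda_{k+1}=\lambda_{k+1}$, one cross term no longer cancels, and we control it with a modified energy.

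\textbf{Step 1: isolate the mismatch.} Comparing (\ref{label3}) with the update of $\lambda_{k+1}$ gives
\[
\bar\lambda_{k+1}-\lambda_{k+1}=\alpha_k\beta_k B(H^\delta_k-H^\delta_{k+1}).
\]
Hence
\begin{align*}
-\delta\alpha_k\beta_k\langle \bar\lambda_{k+1}-\lambda^*,A(Z^\delta_{k+1}-x^*)\rangle
={}&-\delta\alpha_k\beta_k\langle \lambda_{k+1}-\lambda^*,A(Z^\delta_{k+1}-x^*)\rangle\\
&+\delta\alpha_k^2\beta_k^2\langle B(H^\delta_{k+1}-H^\delta_k),A(Z^\delta_{k+1}-x^*)\rangle .
\end{align*}
The first term, together with the $B$-term from Lemma \ref{le1}, cancels $\delta\langle\lambda_{k+1}-\lambda_k,\lambda_{k+1}-\lambda^*\rangle$, exactly as in Theorem \ref{th3.1}; this uses $Ax^*+By^*=b$.

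In the extra term, substitute
\[
A(Z^\delta_{k+1}-x^*)=\frac{\lambda_{k+1}-\lambda_k}{\alpha_k\beta_k}-B(H^\delta_{k+1}-y^*).
\]
The extra term then splits into two pieces:
\[
\delta\alpha_k\beta_k\langle B(H^\delta_{k+1}-H^\delta_k),\lambda_{k+1}-\lambda_k\rangle
\;-\;\delta\alpha_k^2\beta_k^2\langle B(H^\delta_{k+1}-H^\delta_k),B(H^\delta_{k+1}-y^*)\rangle .
\]

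\textbf{Step 2: bound the two pieces.} Apply Young's inequality to the first piece. It is bounded by
\[
\frac{\delta}{2}\|\lambda_{k+1}-\lambda_k\|^2+\frac{\delta\alpha_k^2\beta_k^2}{2}\|B(H^\delta_{k+1}-H^\delta_k)\|^2 .
\]
The first part is absorbed by the term $-\frac{\delta}{2}\|\lambda_{k+1}-\lambda_k\|^2$ already present in (\ref{energybds}).

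For the second piece, use identity (\ref{ds}). It equals
\[
-\frac{\delta\alpha_k^2\beta_k^2}{2}\Bigl(\|B(H^\delta_{k+1}-y^*)\|^2-\|B(H^\delta_k-y^*)\|^2+\|B(H^\delta_{k+1}-H^\delta_k)\|^2\Bigr).
\]
Its last part exactly cancels the remaining Young term.

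\textbf{Step 3: modified energy and the role of Assumption \ref{ass4}.} Define
\[
\widetilde{\mathcal E}_k=\mathcal E_k+\frac{\delta\alpha_k^2\beta_k^2}{2}\|B(H^\delta_k-y^*)\|^2 .
\]
The telescoping part of Step 2 leaves the residual
\[
\frac{\delta}{2}\bigl(\alpha_{k+1}^2\beta_{k+1}^2-\alpha_k^2\beta_k^2\bigr)\|B(H^\delta_{k+1}-y^*)\|^2 .
\]
By the last inequality of Assumption \ref{ass4}, this is at most
\[
\frac{\delta\alpha_k\beta_k\mu_g}{2}\|H^\delta_{k+1}-y^*\|^2 ,
\]
which is cancelled by the corresponding negative term in (\ref{energybds}).

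The other terms are treated as in Theorem \ref{th3.1}: the Lagrangian-gap coefficient and the $\epsilon$-coefficient are nonpositive by $\delta\beta_{k+1}\le\delta\beta_k+\alpha_k\beta_k$ and the monotonicity of $\epsilon_k$. This yields
\[
\widetilde{\mathcal E}_{k+1}-\widetilde{\mathcal E}_k\le \frac{\delta\alpha_k\beta_k\epsilon_k}{2}\bigl(\|x^*\|^2+\|y^*\|^2\bigr).
\]
Summability of $\alpha_k\beta_k\epsilon_k$ then gives a uniform bound on $\widetilde{\mathcal E}_k\ge\mathcal E_k$. This bound yields boundedness of $\{(x_k,y_k,\lambda_k)\}$ and the $\mathcal O(1/\beta_k)$ rate for the Lagrangian gap.

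\textbf{Step 4: feasibility and objective.} The $\lambda$-update in Step 3 of Algorithm \ref{al_3} is identical to that of Algorithm \ref{al_1}. Therefore the telescoping representation with $h_k$, $b_k$ and Lemma \ref{A.1} apply verbatim and give
\[
\|Ax_k+By_k-b\|=\mathcal O(1/\beta_k).
\]
The objective residual then follows from
\[
|\varPhi(x_k,y_k)-\varPhi(x^*,y^*)|\le\mathcal L(x_k,y_k,\lambda^*)-\mathcal L(x^*,y^*,\lambda^*)+\|\lambda^*\|\|Ax_k+By_k-b\| .
\]

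The main obstacle is Steps 1–3: choosing the splitting so that the Young term is exactly cancelled by the $(\ref{ds})$ expansion, and checking that the leftover telescoping coefficient matches precisely the condition in Assumption \ref{ass4}.
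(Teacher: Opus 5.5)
Your proposal is correct and follows the paper's proof essentially step for step. Your first piece $\delta\alpha_k\beta_k\langle B(H^\delta_{k+1}-H^\delta_k),\lambda_{k+1}-\lambda_k\rangle$ is exactly the paper's $\delta\langle\lambda_{k+1}-\bar\lambda_{k+1},\lambda_{k+1}-\lambda_k\rangle$, which the paper also bounds by Young's inequality before applying (\ref{ds}) to the remaining $B$-inner product. The paper then uses the same augmented energy $\mathcal{E}_k+\frac{\delta\alpha_k^2\beta_k^2}{2}\|B(H^\delta_k-y^*)\|^2$, absorbs the residual against the $\mu_g$ term via the last inequality of Assumption \ref{ass4}, and defers feasibility and the objective residual to the argument of Theorem \ref{th3.1}, just as you do.
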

	
	\begin{proof}
		Since $\hat{\lambda}_{k+1} = {\lambda}_{k+1}$, we get
		\begin{align*}
			&-\delta\alpha_k\beta_k\langle \bar\lambda_{k+1}-\lambda^*,A(Z_{k+1}^\delta-x^*) \rangle
			-\delta\alpha_k\beta_k\langle \hat\lambda_{k+1}-\lambda^*,B(H_{k+1}^\delta-y^*) \rangle\nonumber +\delta\langle\lambda_{k+1}-\lambda_k,\lambda_{k+1}-\lambda^*\rangle\nonumber\\
			=&\delta\alpha_k\beta_k\langle \lambda_{k+1}-\bar\lambda_{k+1},A(Z_{k+1}^\delta-x^*) \rangle
			+\delta\alpha_k\beta_k\langle \lambda_{k+1}-\hat\lambda_{k+1},B(H_{k+1}^\delta-y^*) \rangle\nonumber\\
			=&\delta\alpha_k\beta_k\langle \lambda_{k+1}-\bar\lambda_{k+1},A(Z_{k+1}^\delta-x^*) \rangle.
		\end{align*}
		Further,
		\begin{align*}
			&\delta\alpha_k\beta_k\langle\lambda_{k+1}-\bar\lambda_{k+1},A({Z}^{\delta}_{k+1}-x^*)\rangle \nonumber\\ =&\delta\alpha_k\beta_k\langle\lambda_{k+1}-\bar\lambda_{k+1},A({Z}^{\delta}_{k+1}-x^*)\rangle
			+\delta\alpha_k\beta_k\langle\lambda_{k+1}-\bar\lambda_{k+1},B({H}^{\delta}_{k+1}-y^*)\rangle\nonumber\\
			&+\delta\alpha_k\beta_k\langle\bar\lambda_{k+1}-\lambda_{k+1},B({H}^{\delta}_{k+1}-y^*)\rangle\nonumber\\
			=&\delta\langle{\lambda}_{k+1}-\bar{\lambda}_{k+1}, \lambda_{k+1}-\lambda_{k}\rangle+\delta\alpha_k\beta_k\langle\bar\lambda_{k+1}
			-\lambda_{k+1},B({H}^{\delta}_{k+1}-y^*)\rangle\nonumber\\
			\leq& \frac{\delta}{2}\|\lambda_{k+1}-\bar\lambda_{k+1}\|^2 + \frac{\delta}{2}\|\lambda_{k+1}-\lambda_{k}\|^2+\delta\alpha_k\beta_k\langle\bar\lambda_{k+1}
			-\lambda_{k+1},B({H}^{\delta}_{k+1}-y^*)\rangle.
		\end{align*}
		From the parameter settings of $\lambda_{k+1}$ and $\bar\lambda_{k+1}$, we obtain
		\begin{equation}\label{energyt2.3}
			\begin{aligned}
				&\frac{\delta}{2}\|\lambda_{k+1}-\bar\lambda_{k+1}\|^2 +\delta\alpha_k\beta_k\langle\bar\lambda_{k+1}-\lambda_{k+1},B({H}^{\delta}_{k+1}-y^*)\rangle \\
				=&\frac{\delta\alpha_k^2\beta_k^2}{2}\|B({H}^{\delta}_{k+1}-{H}^{\delta}_{k})\|^2-\delta\alpha_k^2\beta_k^2\langle B({H}^{\delta}_{k+1}-{H}^{\delta}_{k}),
				B({H}^{\delta}_{k+1}-y^*)\rangle\\
				=&-\frac{\delta\alpha_k^2\beta_k^2}{2}\|B({H}^{\delta}_{k+1}-y^*)\|^2 + \frac{\delta\alpha_k^2\beta_k^2}{2}\|B({H}^{\delta}_{k}-y^*)\|^2.
			\end{aligned}
		\end{equation}
		where the second equality follows from (\ref{ds}). Substituting the above expression into (\ref{energybds}), we get
		\begin{equation}\label{term}
			\begin{aligned}
				&\mathcal{E}_{k+1}+\frac{\delta\alpha_{k+1}^2\beta_{k+1}^2}{2}\|B({H}^{\delta}_{k+1}-y^*)\|^2 - (\mathcal{E}_k  + \frac{\delta\alpha_k^2\beta_{k}^2}{2}\|B({H}^{\delta}_{k}-y^*)\|^2)\\
				\leq&({\delta}^2{\beta}_{k+1}-{\delta}^2{\beta}_{k}-{\delta}\alpha_k{\beta}_{k})(\mathcal{L}(x_{k+1},y_{k+1},\lambda^*)-\mathcal{L}(x^*,y^*,\lambda^*))\\
				&+\frac{{\delta}\alpha_k{\beta}_k{\epsilon}_k}{2}(\|x^*\|^2+\|y^*\|^2)\\
				&+(\frac{{\delta}^2{\beta}_{k+1}{\epsilon}_{k+1}}{2}
				-\frac{{\delta}^2{\beta}_k{\epsilon}_k}{2}-\frac{{\delta}\alpha_k{\beta}_k{\epsilon}_k}{2})(\|x_{k+1}\|^2+\|y_{k+1}\|^2)\\ &-\frac{1}{2}\|Z_{k+1}^\delta-Z_{k}^\delta\|^2-\frac{1}{2}\|H_{k+1}^\delta-H_{k}^\delta\|^2-\frac{{\delta}^2{\beta}_k{\mu}_f}{2}\|x_{k+1}-x_k\|^2\\
				&-\frac{{\delta}^2{\beta}_k{\mu}_g}{2}\|y_{k+1}-y_k\|^2-\frac{\delta\alpha_k\beta_k\mu_f}{2}\|Z_{k+1}^\delta-x^*\|^2\\
				&\underbrace{-\frac{\delta\alpha_k\beta_k\mu_g}{2}\|H_{k+1}^\delta-y^*\|^2+\frac{\delta(\alpha_{k+1}^2\beta_{k+1}^2-\alpha_{k}^2\beta_k^2)}{2}\|B(H_{k+1}^\delta-y^*)\|^2}_{\mathbb{I}}\\
				&-\frac{\delta\alpha_k\beta_k\mu_f}{2}\|x_{k+1}-Z_{k+1}^\delta\|^2-\frac{\delta\alpha_k\beta_k\mu_g}{2}\|y_{k+1}-H_{k+1}^\delta\|^2\\
				&-(\delta\gamma-1)(\frac{\delta}{a_k}+\frac{1}{2})\|x_{k+1}-x_k\|^2-(\delta\gamma-1)(\frac{\delta}{a_k}+\frac{1}{2})\|y_{k+1}-y_k\|^2,
			\end{aligned}
		\end{equation}
		where $\mu_f \geq 0$ and $\mu_g \geq 0$.
		Let us focus on the term $\mathbb{I}$. For $\alpha_{k+1}\beta_{k+1} \leq \alpha_k\beta_k$, it is evident that $\mathbb{I}\leq0$. In addition, for $\alpha_{k+1}\beta_{k+1} > \alpha_k\beta_k$, in view of Assumption \ref{ass4}, we get
		\begin{align*}
			&\mathbb{I}\leq-\frac{\delta\alpha_k\beta_k\mu_g}{2}\|H_{k+1}^\delta-y^*\|^2+\frac{\delta\|B\|^2(\alpha_{k+1}^2\beta_{k+1}^2
				-\alpha_{k}^2\beta_k^2)}{2}\|H_{k+1}^\delta-y^*\|^2\leq0.
		\end{align*}
		Hence, combining (\ref{term}) with Assumption \ref{ass4} gives
		\begin{equation}\label{energyt2.5}
			\begin{aligned}
				&\mathcal{E}_{k+1}+\frac{\delta\alpha_{k+1}^2\beta_{k+1}^2}{2}\|B({H}^{\delta}_{k+1}-y^*)\|^2 - (\mathcal{E}_k  + \frac{\delta\alpha_{k}^2{\beta_{k}}^2}{2}\|B({H}^{\delta}_{k}-y^*)\|^2)\\
				\leq&\frac{{\delta}\alpha_k{\beta}_k{\epsilon}_k}{2}(\|x^*\|^2+\|y^*\|^2).
			\end{aligned}
		\end{equation}
		Along with $\sum_{k=1}^{+\infty}\alpha_k\beta_{k}\epsilon_{k}<{+\infty}$, it implies that
		\begin{align*}
			\mathcal{E}_{k+1}&+\frac{\delta\alpha_{k+1}^2\beta_{k+1}^2}{2}\|B({H}^{\delta}_{k+1}-y^*)\|^2 \leq \mathcal{E}_1  + \frac{\delta\alpha_{1}^2\beta_{1}^2}{2}\|B({H}^{\delta}_{1}-y^*)\|^2+C_3,
		\end{align*}
		where $C_3$ is a constant. Now, using the definition of $\mathcal{E}_{k}$, we get
		\begin{align*}
			\mathcal{L} ( x_k, y_k,\lambda ^*)-\mathcal{L} ( x^*,y^*,\lambda ^* )=\mathcal{O}\left( \frac{1}{\beta_k} \right).
		\end{align*}
		The remaining proof is similar to Theorem \ref{th3.1}, thus we omit it here.
	\end{proof}
	\begin{corollary}\label{co3.2t}
		Let $\beta_k=k$, $\alpha_k=\frac{1}{k}$ and $\epsilon_k=\frac{1}{k^3}$. Suppose that $\mu_g=0$, $\delta\leq 1$ and $\delta\gamma-1\geq 0$ holds. Let $\{(x_k,y_k,\lambda_k)\}_{k\geq 1}$ be the sequence generated by Algorithm \ref{al_3} and $(x^*,y^*,\lambda^*)\in\Omega$. Then we have the sequence $ \{(x_{k},y_k,\lambda_k)\}_{k\geq1} $ is bounded and the following statements:
		\begin{eqnarray*}
			\begin{aligned}
				&\mathcal{L} ( x_k, y_k,\lambda ^*)-\mathcal{L} ( x^*,y^*,\lambda ^* )=\mathcal{O}\left( \frac{1}{k} \right),\\
				\| f ( x_k,y_k)-& f ( x^*,y^*)\|=\mathcal{O} \left( \frac{1}{k} \right),\,\,\,\left \| Ax_k+By_k-b\right \|=\mathcal{O}\left ( \frac{1}{k}\right ).\\
			\end{aligned}
		\end{eqnarray*}
	\end{corollary}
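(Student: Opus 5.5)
The plan is to obtain the corollary as a direct specialization of Theorem \ref{th3.4}: check that the choices $\beta_k=k$, $\alpha_k=\frac1k$, $\epsilon_k=\frac1{k^3}$, together with $\mu_g=0$, $\delta\le 1$ and $\delta\gamma\ge 1$, satisfy Assumption \ref{ass4} and the extra hypotheses on $\{\epsilon_k\}$. Then substitute $\beta_k=k$ into the rates $\mathcal O(1/\beta_k)$. The quantity written $f(x_k,y_k)-f(x^*,y^*)$ in the statement is read as $\varPhi(x_k,y_k)-\varPhi(x^*,y^*)$, which is what Theorem \ref{th3.4} controls.

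The verification is done term by term.
\begin{itemize}
\item The sequence $\beta_k=k$ is positive, nondecreasing and tends to $+\infty$, and $\alpha_k=\frac1k>0$.
\item The condition $\delta\gamma-1\ge 0$ is assumed directly.
\item The step-size inequality $\delta\beta_{k+1}\le\delta\beta_k+\alpha_k\beta_k$ reads $\delta(k+1)\le \delta k+1$, i.e.\ $\delta\le 1$, which is assumed.
\item Since $\alpha_k\beta_k\equiv 1$, we have $\|B\|^2(\alpha_{k+1}^2\beta_{k+1}^2-\alpha_k^2\beta_k^2)=0$. This is $\le \alpha_k\beta_k\mu_g=0$, so the last condition of Assumption \ref{ass4} holds with equality even though $\mu_g=0$.
\item The sequence $\epsilon_k=k^{-3}$ is nonincreasing, and $\sum_k\alpha_k\beta_k\epsilon_k=\sum_k k^{-3}<+\infty$.
\end{itemize}

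With all hypotheses verified, Theorem \ref{th3.4} gives boundedness of $\{(x_k,y_k,\lambda_k)\}_{k\geq1}$. It also gives the three estimates with $\beta_k=k$, i.e.\ rates $\mathcal O(1/k)$ for the Lagrangian gap, the objective residual and the feasibility violation.

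There is no real obstacle; the only point to watch is the condition involving $\|B\|^2$ when $\mu_g=0$. It is exactly the constancy $\alpha_k\beta_k\equiv1$ that makes the term $\mathbb{I}$ in the proof of Theorem \ref{th3.4} nonpositive without any strong convexity of $g$. This explains why the corollary pairs $\alpha_k=1/k$ with $\beta_k=k$. Likewise, $\delta\le1$ is precisely what the discrete time-scale inequality requires for linear $\beta_k$.
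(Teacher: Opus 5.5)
Your proposal is correct and follows the paper's route. The paper gives no separate proof of this corollary; it is meant as a direct specialization of Theorem \ref{th3.4}, obtained by checking that $\beta_k=k$, $\alpha_k=1/k$, $\epsilon_k=k^{-3}$ with $\mu_g=0$, $\delta\le 1$, $\delta\gamma\ge 1$ satisfy Assumption \ref{ass4} and the $\{\epsilon_k\}$ hypotheses, and then substituting $\beta_k=k$. Your verification of each condition is right, including the use of $\alpha_k\beta_k\equiv 1$ to dispose of the $\|B\|^2$ condition when $\mu_g=0$, and so is your reading of $f(x_k,y_k)$ as $\varPhi(x_k,y_k)$.
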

	\begin{corollary}\label{co3.2}
		Let $\beta_k=\frac{\mu_gk^2}{3\|B\|^2}$, $\alpha_k=\frac{1}{k}$ and $\epsilon_k=\frac{1}{\alpha_k\beta_kk^3}$. Suppose that $\mu_g>0$, $\delta\leq\frac{1}{3}$ and $\delta\gamma-1\geq 0$ holds. Let $\{(x_k,y_k,\lambda_k)\}_{k\geq 1}$ be the sequence generated by Algorithm \ref{al_3} and  $(x^*,y^*,\lambda^*)\in\Omega$. Then we have the sequence $ \{(x_{k},y_k,\lambda_k)\}_{k\geq1} $ is bounded and the following statements:
		\begin{eqnarray*}
			\begin{aligned}
				&\mathcal{L} ( x_k, y_k,\lambda ^*)-\mathcal{L} ( x^*,y^*,\lambda ^* )=\mathcal{O}\left( \frac{1}{k^2} \right),\\
				\| f ( x_k,y_k)-& f ( x^*,y^*)\|=\mathcal{O} \left( \frac{1}{k^2} \right),\,\,\,\left \| Ax_k+By_k-b\right \|=\mathcal{O}\left ( \frac{1}{k^2}\right ).\\
			\end{aligned}
		\end{eqnarray*}
	\end{corollary}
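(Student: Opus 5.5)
The plan is to deduce Corollary \ref{co3.2} from Theorem \ref{th3.4}. With the given choices, $\beta_k\asymp k^2$, so the three $\mathcal{O}(1/\beta_k)$ estimates become $\mathcal{O}(1/k^2)$ once all hypotheses of Theorem \ref{th3.4} are verified. The work therefore reduces to checking Assumption \ref{ass4}, the monotonicity of $\{\epsilon_k\}$, and the summability $\sum_k\alpha_k\beta_k\epsilon_k<+\infty$ for $\beta_k=\frac{\mu_g k^2}{3\|B\|^2}$, $\alpha_k=\frac1k$ and $\epsilon_k=\frac{1}{\alpha_k\beta_k k^3}$.

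First, the elementary conditions. Since $\mu_g>0$, the sequence $\beta_k$ is positive, nondecreasing and tends to $+\infty$, and $\delta\gamma-1\geq0$ is assumed. Next, $\alpha_k\beta_k\epsilon_k=k^{-3}$ is summable. Also $\epsilon_k=\frac{3\|B\|^2}{\mu_g k^4}$ is nonincreasing. If $B=0$, every condition involving $\|B\|$ is trivial; one would simply take any $\beta_k\asymp k^2$, so I treat $\|B\|>0$.

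Second, the time-scale inequality $\delta\beta_{k+1}\leq\delta\beta_k+\alpha_k\beta_k$. After multiplying by $\frac{3\|B\|^2}{\mu_g}$ and dividing by $k$, it reads $\delta\frac{2k+1}{k}\leq 1$. Since $\frac{2k+1}{k}\leq 3$ for all $k\geq1$, this holds exactly because $\delta\leq\frac13$. This is where the hypothesis on $\delta$ enters.

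Third, and this is the main obstacle, the condition $\|B\|^2(\alpha_{k+1}^2\beta_{k+1}^2-\alpha_k^2\beta_k^2)\leq\alpha_k\beta_k\mu_g$. Here $\alpha_k\beta_k=\frac{\mu_g k}{3\|B\|^2}$. The left side equals $\|B\|^2\frac{\mu_g^2}{9\|B\|^4}(2k+1)=\frac{\mu_g^2(2k+1)}{9\|B\|^2}$, and the right side equals $\frac{\mu_g^2 k}{3\|B\|^2}$. The inequality therefore reduces to $2k+1\leq3k$, i.e.\ $k\geq1$, which holds. The factor $3$ in $\beta_k$ is precisely what makes these constants match. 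I would double-check this arithmetic carefully, since it is the only place where the strong convexity of $g$ is used.

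With Assumption \ref{ass4} verified, Theorem \ref{th3.4} gives boundedness of the iterates together with the $\mathcal{O}(1/\beta_k)=\mathcal{O}(1/k^2)$ rates for the Lagrangian gap, the objective residual and the feasibility violation. This completes the corollary.
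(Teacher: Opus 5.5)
Your proposal is correct and takes essentially the same approach as the paper, which presents this corollary as a direct specialization of Theorem \ref{th3.4}. Your checks are exactly the hypotheses needed: $\alpha_k\beta_k\epsilon_k=k^{-3}$ is summable, $\epsilon_k$ is nonincreasing, $\delta(2k+1)\le k$ follows from $\delta\le\frac13$, and $2k+1\le 3k$ gives the $\|B\|^2$ condition of Assumption \ref{ass4}; together these yield $\mathcal{O}(1/\beta_k)=\mathcal{O}(1/k^2)$.
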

	\begin{remark}
		Corollaries \ref{co3.2t} and \ref{co3.2} provide two examples showing that Algorithm \ref{al_3} achieves non-ergodic convergence rates of $O(1/k)$ for convex objectives and $O(1/k^2)$ for partially strongly convex objectives (where $\mu_g > 0$). These rates match those in \cite{30}, but we relax the equality condition between the step size and the time scale by replacing the equality in scheme (3.2) of \cite{30} with the inequality $\delta\beta_{k+1} \leq \delta\beta_k + \alpha_k\beta_k$.
	\end{remark}
	\begin{remark}
		For partially strongly convex objectives, Algorithm \ref{al_3} can only achieve its best possible convergence rate of $O(1/k)$ when the step size $\alpha_k = 1$. However, adopting a varying step size $\{\alpha_k\}_{k\geq1}$, we can achieve a faster convergence rate $O(1/k^2)$.
		
	\end{remark}
	\begin{remark}
		In fact, besides the results from Theorems \ref{th3.1} and \ref{th3.4}, we can derive the convergence rate of the trajectory when $f$ and $g$ are strongly convex.
		According to $\mathcal{L} ( x_k, y_k,\lambda ^*)-\mathcal{L} ( x^*,y^*,\lambda ^* )=\mathcal{O}\left( \frac{1}{\beta_k} \right)$
		and the following inequality
		\begin{equation*} \frac{{\mu}_f}{2}\|x_k-x^*\|^2+\frac{{\mu}_g}{2}\|y_k-y^*\|^2\leq {\mathcal{L}(x_k,y_k,{\lambda^*})-\mathcal{L}(x^*,y^*,{\lambda^*})-\langle u,x_k-x^*\rangle-\langle v, y_k-y^*\rangle},
		\end{equation*}
		where $u\in\partial_x \mathcal{L}(x^*,y^*,{\lambda^*})$ and $v\in\partial_y \mathcal{L}(x^*,y^*,{\lambda^*})$, %Indeed, by taking $(x_1,y_1)=(x_k,y_k)$ and $(x_2,y_2)=(x^*,y^*)$
		we immediately deduce that
		\begin{equation*}
			{\mu}_f\|x_k-x^*\|^2+\mu_g\|y_k-y^*\|^2=\mathcal{O} \left( \frac{1}{\beta_k} \right).
		\end{equation*}
		Moreover, from (\ref{energybds}) together with Theorems \ref{th3.1} and \ref{th3.4}, we obtain
		\begin{align*}
			\mathcal{E}_{k+1}-\mathcal{E}_k \leq -\frac{{\delta}^2{\beta}_k{\mu}_f}{2}\|x_{k+1}-x_k\|^2-\frac{{\delta}^2{\beta}_k{\mu}_g}{2}\|y_{k+1}-y_k\|^2.
		\end{align*}
		Summing the above inequality from $1$ to $K$, we get
		\begin{align*} \mathcal{E}_{K}+\frac{{\delta}^2{\mu}_f}{2}\sum_{k=1}^{K-1}{\beta}_k\|x_{k+1}-x_k\|^2+\frac{{\delta}^2{\mu}_g}{2}\sum_{k=1}^{K-1}{\beta}_k\|y_{k+1}-y_k\|^2<+\infty.
		\end{align*}
		It follows that
		\begin{align*}
			\underset{k\rightarrow+\infty}{\lim} \mu_f\beta_k\|x_{k+1}-x_k\|^2=0,\,\,\,\underset{k\rightarrow+\infty}{\lim} \mu_g\beta_k\|y_{k+1}-y_k\|^2=0.
		\end{align*}
		Hence, we obtain the desired conclusion:
		\begin{eqnarray*}
			\begin{aligned}
				\| x_k-x^*\|^2=\mathcal{O} \left( \frac{1}{\beta_k} \right),\,\,\,\left \| x_{k+1}-x_k\right \|^2=o\left ( \frac{1}{\beta_k}\right ),\;for\;\mu_f>0;\\
			\end{aligned}
		\end{eqnarray*}
		\begin{eqnarray*}
			\begin{aligned}
				\| y_k-y^*\|^2=\mathcal{O} \left( \frac{1}{\beta_k} \right),\,\,\,\left \| y_{k+1}-y_k\right \|^2=o\left ( \frac{1}{\beta_k}\right ),\;for\;\mu_g>0.\\
			\end{aligned}
		\end{eqnarray*}
	\end{remark}
	
	\section{Strong convergence of sequences to the minimal norm solution}
	
	In this section, we investigate the strong convergence properties of the sequences generated by the algorithms mentioned before. Let $(\bar x^*, \bar y^*)$ be the minimal norm of the solution set $\mathbb S$, i.e.,$(\bar x^*, \bar y^*)=proj_{\mathbb S} 0$, where $proj$ denotes the projection operator. Then, there exists an optimal dual solution $\bar\lambda^*\in Z $ for problem (\ref{max min}) such that $(\bar x^*,\bar y^*,\bar \lambda^*)\in \Omega$. For any $\epsilon>0$, we set
	\begin{align*}
		(x_\epsilon,y_\epsilon):=\underset{x\in \mathcal{X}, y\in \mathcal{Y}}{\arg\min}\,\mathcal{L}^\epsilon(x,y,{\bar\lambda^*})
	\end{align*}
	with
	\begin{align*}
		\mathcal{L}^\epsilon(x,y,{\bar\lambda^*}) = \mathcal{L}(x,y,{\bar\lambda^*})+\frac{{\epsilon}}{2}\left\|x\right\|^2+\frac{{\epsilon}}{2}\left\|y\right\|^2.
	\end{align*}
	Clearly, from the optimality condition, we get
	\begin{equation*}
		\begin{cases}
			0 \in \partial_x \mathcal{L}^\epsilon(x_\epsilon,y_\epsilon,{\bar\lambda^*})=\partial_x \mathcal{L}(x_\epsilon,y_\epsilon,\bar\lambda^*)+\epsilon x_\epsilon,\\
			0 \in \partial_y \mathcal{L}^\epsilon(x_\epsilon,y_\epsilon,{\bar\lambda^*})=\partial_y \mathcal{L}(x_\epsilon,y_\epsilon,\bar\lambda^*)+\epsilon y_\epsilon.
		\end{cases}
	\end{equation*}
	Due to the classical properties of the Tikhonov regularization, we get
	\begin{align*}
		\|x_\epsilon\| \leq \|\bar x^*\|,\,\forall \epsilon>0,\qquad\lim_{k\rightarrow{+\infty}}\|x_\epsilon-\bar x^*\|=0,
	\end{align*}
	and
	\begin{align}\label{4.5}
		\|y_\epsilon\| \leq \|\bar y^*\|,\,\forall \epsilon>0,\qquad\lim_{k\rightarrow{+\infty}}\|y_\epsilon-\bar y^*\|=0.
	\end{align}

	Now, we recall the following lemma and establish two propositions that will facilitate the subsequent proofs.
	\begin{lemma}\label{A.2}\textup{\cite[Lemma IV.3.2]{7}}
		Suppose that $(\varphi_k)^\infty_{k=1}$ is an infinite sequence of real numbers such that $\sum_{k=1}^{+\infty} {\varphi_k}=s$ exists and is finite, where $k\geq 1$. And let $(\psi_k)^\infty_{k=1}$ be a nondecreasing function in $(0,+\infty)$ such that $\underset{k\rightarrow+\infty}{\lim} \psi_k =+\infty $. Then
		\begin{align*}
			\underset{k\rightarrow+\infty}{\lim} \frac{1}{\psi_k}\sum_{i=1}^{k}{\psi_i\varphi_i}=0.
		\end{align*}
	\end{lemma}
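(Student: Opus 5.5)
This is the discrete Kronecker lemma; I will prove it by Abel summation. Set $s_k:=\sum_{i=k}^{+\infty}\varphi_i$ for the tail sums, so that $\varphi_i=s_i-s_{i+1}$ and $s_k\to 0$ as $k\to+\infty$. Summation by parts then gives
\begin{align*}
\sum_{i=1}^{k}\psi_i\varphi_i=\sum_{i=1}^{k}\psi_i(s_i-s_{i+1})=\psi_1 s_1+\sum_{i=2}^{k}(\psi_i-\psi_{i-1})s_i-\psi_k s_{k+1}.
\end{align*}
After dividing by $\psi_k$, we need to show each of the three resulting pieces tends to $0$.

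The first and last pieces are immediate. Since $\psi_k\to+\infty$, we have $\psi_1 s_1/\psi_k\to0$. The last term is $\psi_k s_{k+1}/\psi_k=s_{k+1}$, which tends to $0$ because the series converges.

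The middle term is a weighted average of the $s_i$, and this is the only step needing care. Because $\psi$ is nondecreasing, the weights $\psi_i-\psi_{i-1}$ are nonnegative, and they telescope to $\psi_k-\psi_1\le\psi_k$. I would use a Cesàro/Toeplitz $\varepsilon$-argument. Given $\varepsilon>0$, choose $N$ with $|s_i|\le\varepsilon$ for all $i> N$, and split the sum at $N$:
\begin{align*}
\frac{1}{\psi_k}\Big|\sum_{i=2}^{k}(\psi_i-\psi_{i-1})s_i\Big|\le\frac{1}{\psi_k}\Big|\sum_{i=2}^{N}(\psi_i-\psi_{i-1})s_i\Big|+\varepsilon\,\frac{\psi_k-\psi_N}{\psi_k}.
\end{align*}
The first part is a fixed finite quantity divided by $\psi_k\to+\infty$, so it vanishes in the limit. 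The second part is at most $\varepsilon$.

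Letting $k\to+\infty$ and then $\varepsilon\to0$ shows the middle term tends to $0$. Combining the three pieces gives $\frac{1}{\psi_k}\sum_{i=1}^k\psi_i\varphi_i\to0$. Positivity of $\psi$ guarantees the division is legitimate, and monotonicity is used only for the sign of the weights.
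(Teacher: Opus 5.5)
Your proof is correct. The summation-by-parts identity $\sum_{i=1}^k\psi_i\varphi_i=\psi_1s_1+\sum_{i=2}^k(\psi_i-\psi_{i-1})s_i-\psi_ks_{k+1}$ checks out. The Toeplitz splitting, read for $k>N$ and using $\psi_N>0$, gives the bound $\varepsilon(\psi_k-\psi_N)/\psi_k\le\varepsilon$.

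The paper gives no proof of this lemma; it imports it from the cited reference. What you have written is the standard Abel-summation proof of this Kronecker-type lemma, so nothing is missing.
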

	\begin{proposition}\label{pr1}
		Suppose that Assumption \ref{ass} holds and $ \{\epsilon_{k}\}_{k\geq1} $ is a nonincreasing sequence satisfying $\sum_{k=1}^{+\infty}\alpha_k\epsilon_{k}<{+\infty}$. Let $ \{(x_{k},y_{k},\lambda_{k})\}_{k\geq1} $ be the sequence generated by Algorithm \ref{al_1} (and likewise for Assumptions \ref{ass4} corresponding to Algorithms \ref{al_3}). For every $k\geq1$ and $(x^*,y^*)\in \mathbb S $, it holds
		\begin{align*}
			\underset{k\rightarrow+\infty}{\lim} \mathcal{L}(x_k,y_k,\lambda^*)-\mathcal{L}(x^*,y^*,\lambda^*)=0.
		\end{align*}
	\end{proposition}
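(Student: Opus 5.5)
The plan is to reuse the one-step energy estimates from the proofs of Theorems \ref{th3.1} and \ref{th3.4}, without assuming that the $\epsilon$-perturbation is summable. Under the weaker hypothesis $\sum_{k}\alpha_k\epsilon_k<+\infty$, the quantity $\sum_k \alpha_k\beta_k\epsilon_k$ may diverge, so $\mathcal{E}_k$ need not be bounded. It should, however, grow strictly slower than $\beta_k$, and Lemma \ref{A.2} is designed to show exactly this.

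\emph{Step 1 (Algorithm \ref{al_1}).} Start from \eqref{energyt1.1}, whose derivation used only Lemma \ref{le1}, the implicit multiplier choice, Assumption \ref{ass} and the monotonicity of $\{\epsilon_k\}$. As in the proof of Theorem \ref{th3.1}, the coefficient $\delta^2\beta_{k+1}-\delta^2\beta_k-\delta\alpha_k\beta_k$ is nonpositive, and $\mathcal{L}(x_{k+1},y_{k+1},\lambda^*)-\mathcal{L}(x^*,y^*,\lambda^*)\ge 0$ because $(x^*,y^*,\lambda^*)\in\Omega$. The coefficient of $\|x_{k+1}\|^2+\|y_{k+1}\|^2$ is also nonpositive, and all remaining terms are nonpositive. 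This yields \eqref{energyt1.3}:
\[
\mathcal{E}_{k+1}-\mathcal{E}_k\le \tfrac{\delta}{2}(\|x^*\|^2+\|y^*\|^2)\,\alpha_k\beta_k\epsilon_k .
\]
Summing from $1$ to $k-1$ gives
\[
\mathcal{E}_k\le \mathcal{E}_1+C\sum_{i=1}^{k-1}\beta_i\,(\alpha_i\epsilon_i),\qquad C:=\tfrac{\delta}{2}(\|x^*\|^2+\|y^*\|^2).
\]

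\emph{Step 2 (lower bound and Lemma \ref{A.2}).} Since $I_k^2,I_k^3,I_k^4\ge0$ (using $\delta\gamma\ge1$) and the regularization part of $I_k^1$ is nonnegative, we have $\mathcal{E}_k\ge \delta^2\beta_k\big(\mathcal{L}(x_k,y_k,\lambda^*)-\mathcal{L}(x^*,y^*,\lambda^*)\big)$. Therefore
\[
0\le \mathcal{L}(x_k,y_k,\lambda^*)-\mathcal{L}(x^*,y^*,\lambda^*)\le \frac{\mathcal{E}_1}{\delta^2\beta_k}+\frac{C}{\delta^2}\cdot\frac{1}{\beta_k}\sum_{i=1}^{k}\beta_i\,\alpha_i\epsilon_i .
\]
The first term tends to $0$ because $\beta_k\to+\infty$. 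For the second, apply Lemma \ref{A.2} with $\varphi_i=\alpha_i\epsilon_i$, which is summable by hypothesis, and $\psi_i=\beta_i$, which is positive, nondecreasing and divergent by Assumption \ref{ass}. This term therefore also tends to $0$, which proves the claim for Algorithm \ref{al_1}.

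\emph{Step 3 (Algorithm \ref{al_3}).} Repeat the argument with the augmented energy
\[
\widetilde{\mathcal{E}}_k:=\mathcal{E}_k+\tfrac{\delta\alpha_k^2\beta_k^2}{2}\|B(H_k^\delta-y^*)\|^2 .
\]
By \eqref{energyt2.5}, which relies on Assumption \ref{ass4} to make the term $\mathbb{I}$ nonpositive, $\widetilde{\mathcal{E}}_k$ satisfies the same one-step bound. Since $\widetilde{\mathcal{E}}_k\ge\mathcal{E}_k\ge\delta^2\beta_k(\mathcal{L}-\mathcal{L}^*)$, Steps 1 and 2 go through verbatim.

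The main point to check carefully is that the derivations of \eqref{energyt1.3} and \eqref{energyt2.5} never used summability of $\alpha_k\beta_k\epsilon_k$. They use only the sign conditions from the assumptions and the monotonicity of $\epsilon_k$, with summability entering afterwards to bound $\mathcal{E}_k$. Once this is confirmed, the result reduces to the Kronecker-type Lemma \ref{A.2}.
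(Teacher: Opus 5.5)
Your proposal is correct and follows essentially the same route as the paper's proof: sum the one-step bounds \eqref{energyt1.3} (resp.\ \eqref{energyt2.5}), divide by $\delta^2\beta_k$, and apply Lemma \ref{A.2} with $\varphi_k=\alpha_k\epsilon_k$ and $\psi_k=\beta_k$. For Algorithm \ref{al_3} you use the augmented energy $\mathcal{E}_k+\frac{\delta\alpha_k^2\beta_k^2}{2}\|B(H_k^\delta-y^*)\|^2$, which is the paper's device, and your initial constant carries the correct $\alpha_1^2\beta_1^2$ factor, whereas the paper writes only $\beta_1^2$.
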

	\begin{proof}
		Using a similar arguments as (\ref{energyt1.3}) and (\ref{energyt2.5}), we obtain
		\begin{align*}
			\frac{\mathcal{E}_{k}}{\delta^2\beta_k} \leq \frac{\mathcal{E}_{1}}{\delta^2\beta_k}+\frac{\|x^*\|^2+\|y^*\|^2}{2\delta\beta_k}\sum_{i=1}^{k-1}\alpha_i\beta_{i}\epsilon_{i}
			\leq \frac{\mathcal{E}_{1}}{\delta^2\beta_k}+\frac{\|x^*\|^2+\|y^*\|^2}{2\delta\beta_{k-1}}\sum_{i=1}^{k-1}\alpha_i\beta_{i}\epsilon_{i},
		\end{align*}
		and
		\begin{align*}
			\frac{\mathcal{E}_{k}}{\delta^2\beta_k}&\leq \frac{\mathcal{E}_{1}}{\delta^2\beta_k}
			+\frac{\beta_{1}^2}{2\delta\beta_k}\|B({H}^{\delta}_{1}-y^*)\|^2+
			\frac{\|x^*\|^2+\|y^*\|^2}{2\delta\beta_k}\sum_{i=1}^{k-1}\alpha_i\beta_{i}\epsilon_{i}\\
			&\leq \frac{\mathcal{E}_{1}}{\delta^2\beta_k}
			+\frac{\beta_{1}^2}{2\delta\beta_k}\|B({H}^{\delta}_{1}-y^*)\|^2+
			\frac{\|x^*\|^2+\|y^*\|^2}{2\delta\beta_{k-1}}\sum_{i=1}^{k-1}\alpha_i\beta_{i}\epsilon_{i}.
		\end{align*}
		Applying Lemma \ref{A.2} with $\varphi_k=\alpha_k\epsilon_k$ and $\psi_k=\beta_k$, we have
		\begin{align*}
			\underset{k\rightarrow+\infty}{\lim} \frac{1}{\beta_k}\sum_{i=1}^{k}\alpha_i\beta_i\epsilon_i=0.
		\end{align*}
		As $\lim_{k\rightarrow{+\infty}}\beta_{k}={+\infty}$, we can deduce that
		\begin{align*}
			\underset{k\rightarrow+\infty}{\lim} \frac{\mathcal{E}_{1}}{\delta^2\beta_k}=0 \qquad and\qquad \underset{k\rightarrow+\infty}{\lim} \frac{\mathcal{E}_{1}}{\delta^2\beta_k}
			+\frac{\beta_{1}^2}{2\delta\beta_k}\|B({H}^{\delta}_{1}-y^*)\|^2=0.
		\end{align*}
		These together with $(\ref{energy})$ complete the proof.
	\end{proof}
	\begin{proposition}\label{pr2}
		Let $( \bar{x}^*,\bar{y}^* )={proj}_\mathbb{S} 0$ and $\{( x_k,y_k,\lambda_k )\}_{k\geq 1}$ be the sequence generated by the discretization scheme $(\ref{disc})$. Then,
		\begin{align*}
			\mathcal{L}^{\epsilon_k}( x_k,y_k,{\bar\lambda^*})-\mathcal{L}^{\epsilon_k}( \bar{x}^*,\bar{y}^*,{\bar\lambda^*} )\geq &\frac{\epsilon_k}{2} \|(x_k, y_k)-(x_{\epsilon_k},y_{\epsilon_k})\|^2\\
			&+\frac{\epsilon_k}{2}( \| x_{\epsilon_k}\|^2-\| \bar{x}^*\|^2+ \| y_{\epsilon _k} \|^2- \| \bar{y}^* \|^2).
		\end{align*}
	\end{proposition}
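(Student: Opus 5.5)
The inequality comes from the strong convexity of the Tikhonov-regularized Lagrangian around its minimizer $(x_{\epsilon_k},y_{\epsilon_k})$, combined with an algebraic rewriting of $\mathcal{L}^{\epsilon_k}$ evaluated at $(\bar x^*,\bar y^*)$. No dynamics from the scheme (\ref{disc}) are needed: the statement is pointwise in $k$, with $(x_k,y_k)$ treated as an arbitrary point.

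\textbf{Step 1 (strong convexity at the minimizer).} Since $f,g$ are convex, the map $(x,y)\mapsto\mathcal{L}(x,y,\bar\lambda^*)$ is convex. Hence $\mathcal{L}^{\epsilon_k}(\cdot,\cdot,\bar\lambda^*)$ is $\epsilon_k$-strongly convex on $\mathcal{X}\times\mathcal{Y}$. By the optimality condition displayed above, $0\in\partial_x\mathcal{L}^{\epsilon_k}(x_{\epsilon_k},y_{\epsilon_k},\bar\lambda^*)$ and $0\in\partial_y\mathcal{L}^{\epsilon_k}(x_{\epsilon_k},y_{\epsilon_k},\bar\lambda^*)$. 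Using the separable structure, these two conditions give $0$ in the joint subdifferential. The strong convexity inequality with zero subgradient then yields
\begin{equation*}
\mathcal{L}^{\epsilon_k}(x_k,y_k,\bar\lambda^*)-\mathcal{L}^{\epsilon_k}(x_{\epsilon_k},y_{\epsilon_k},\bar\lambda^*)\geq\frac{\epsilon_k}{2}\|(x_k,y_k)-(x_{\epsilon_k},y_{\epsilon_k})\|^2 .
\end{equation*}
This is essentially inequality (\ref{q}) with $\mu$ replaced by the $\epsilon_k$ contributed by the quadratic term.

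\textbf{Step 2 (compare the minimizer with $(\bar x^*,\bar y^*)$).} Write
\begin{equation*}
\mathcal{L}^{\epsilon_k}(x_{\epsilon_k},y_{\epsilon_k},\bar\lambda^*)-\mathcal{L}^{\epsilon_k}(\bar x^*,\bar y^*,\bar\lambda^*)=\big[\mathcal{L}(x_{\epsilon_k},y_{\epsilon_k},\bar\lambda^*)-\mathcal{L}(\bar x^*,\bar y^*,\bar\lambda^*)\big]+\frac{\epsilon_k}{2}\big(\|x_{\epsilon_k}\|^2-\|\bar x^*\|^2+\|y_{\epsilon_k}\|^2-\|\bar y^*\|^2\big).
\end{equation*}
Since $(\bar x^*,\bar y^*,\bar\lambda^*)\in\Omega$, the saddle point inequality gives $\mathcal{L}(x,y,\bar\lambda^*)\geq\mathcal{L}(\bar x^*,\bar y^*,\bar\lambda^*)$ for all $(x,y)$. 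So the bracketed term is nonnegative and can be dropped.

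\textbf{Step 3.} Adding the inequalities from Steps 1 and 2 gives exactly the claimed bound.

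The only point needing care is Step 1. One must justify that the partial-subdifferential optimality conditions combine into joint optimality, which follows since $(x_{\epsilon_k},y_{\epsilon_k})$ is defined as a joint minimizer. One must also confirm that the strong convexity modulus is exactly $\epsilon_k$, i.e.\ that $\mathcal{L}^{\epsilon_k}-\frac{\epsilon_k}{2}\|(\cdot,\cdot)\|^2$ is convex. Both are routine, so I expect no real obstacle.
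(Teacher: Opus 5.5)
Your argument is correct and is essentially the one the paper defers to (Proposition 5.1 of \cite{35}). It combines the $\epsilon_k$-strong convexity of $\mathcal{L}^{\epsilon_k}(\cdot,\cdot,\bar\lambda^*)$ at its minimizer $(x_{\epsilon_k},y_{\epsilon_k})$ with the saddle-point inequality $\mathcal{L}(x_{\epsilon_k},y_{\epsilon_k},\bar\lambda^*)\geq\mathcal{L}(\bar x^*,\bar y^*,\bar\lambda^*)$, which lets you drop the unregularized gap. You are also right that the scheme (\ref{disc}) plays no role, so the bound holds at any point $(x_k,y_k)$, provided $\epsilon_k>0$ so that $(x_{\epsilon_k},y_{\epsilon_k})$ is defined.
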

	\begin{proof}
		Since the proof follows a similar argument to Proposition 5.1 in \cite{35}, we refer the reader to \cite{35} for details and omit it here.
	\end{proof}

	In what following, we prove the strong convergence of the sequences generated by our proposed algorithms.
	\begin{theorem}\label{th4.1}
		Suppose that Assumption \ref{ass} holds. Let $\{(x_k,y_k,\lambda_k)\}_{k\geq 1}$ be the sequence generated by Algorithm \ref{al_1} and  $(x^*,y^*,\lambda^*)\in\Omega$.  Assume that $\{\epsilon_{k}\}_{k\geq1} $ is a nonincreasing sequence satisfying $\sum_{k=1}^{+\infty}\alpha_k\epsilon_{k}<{+\infty}$ and $\lim_{k\rightarrow{+\infty}}\beta_{k}\epsilon_{k}={+\infty}$, then
		\begin{eqnarray*}\label{q2.1}
			\underset{k\rightarrow{+\infty}}{\lim\inf}\|(x_k,y_k)- (\bar x^*,\bar y^*)\|=0.
		\end{eqnarray*}
		Further, if there exists an integer $K\geq 1$ such that the sequence $\{(x_k,y_k)\}_{k\geq K}$ stays in either the open ball ${B}\left( 0,\sqrt{ \| \bar{x}^* \|^2+\| \bar{y}^*\|^2}\right)$ or its complement, then
		\begin{eqnarray*}\label{q2.2}
			\lim_{k\rightarrow{+\infty}}\|(x_k,y_k)-(\bar x^*,\bar y^*)\|=0.
		\end{eqnarray*}
	\end{theorem}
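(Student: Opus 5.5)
We follow the standard Tikhonov strategy of \cite{35}, using the energy $\mathcal{E}_k$ built with the particular saddle point $(\bar x^*,\bar y^*,\bar\lambda^*)\in\Omega$, and split into two cases.

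\emph{Case 1.} Suppose $\|(x_k,y_k)\|\geq\|(\bar x^*,\bar y^*)\|$ for all $k\geq K$. By Proposition \ref{pr1}, $\mathcal{L}(x_k,y_k,\bar\lambda^*)-\mathcal{L}(\bar x^*,\bar y^*,\bar\lambda^*)\to0$. Boundedness of $\{(x_k,y_k)\}$ (Theorem \ref{th3.1}, using that $\beta_k$ is nondecreasing so $\sum\alpha_k\beta_k\epsilon_k$ control is replaced by the argument in Proposition \ref{pr1}) then lets us take any weak cluster point $(\hat x,\hat y)$. Lower semicontinuity of $f,g$ and of the feasibility term shows $(\hat x,\hat y)\in\mathbb S$. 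Weak lsc of the norm gives $\|(\hat x,\hat y)\|\leq\liminf\|(x_k,y_k)\|$.

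For the reverse inequality we use the energy estimate. Rewrite $I_k^1$ via Proposition \ref{pr2}: $\mathcal{E}_k\ge \delta^2\beta_k\frac{\epsilon_k}{2}\|(x_k,y_k)-(x_{\epsilon_k},y_{\epsilon_k})\|^2+\delta^2\beta_k\frac{\epsilon_k}{2}(\|x_{\epsilon_k}\|^2+\|y_{\epsilon_k}\|^2-\|\bar x^*\|^2-\|\bar y^*\|^2)+\frac{\delta^2\beta_k\epsilon_k}{2}(\|\bar x^*\|^2+\|\bar y^*\|^2)$. The last term arises because $I^1_k$ uses $\mathcal L$ rather than $\mathcal L^{\epsilon_k}$ at the optimum.

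The main work is a refined energy inequality. We keep the term $-(\delta\alpha_k\beta_k\epsilon_k/2)\|(x_{k+1},y_{k+1})\|^2$ from Lemma \ref{le1}, which was previously discarded. Together with $+\frac{\delta\alpha_k\beta_k\epsilon_k}{2}\|(\bar x^*,\bar y^*)\|^2$, it gives
\[
\mathcal{E}_{k+1}-\mathcal{E}_k\le \frac{\delta\alpha_k\beta_k\epsilon_k}{2}\big(\|(\bar x^*,\bar y^*)\|^2-\|(x_{k+1},y_{k+1})\|^2\big).
\]
In Case 1 this is $\le0$ for $k\ge K$, so $\mathcal{E}_k$ is bounded. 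Dividing by $\delta^2\beta_k\epsilon_k\to+\infty$ yields $\|(x_k,y_k)-(x_{\epsilon_k},y_{\epsilon_k})\|^2\le \frac{2\mathcal E_K}{\delta^2\beta_k\epsilon_k}+(\|\bar x^*\|^2+\|\bar y^*\|^2-\|x_{\epsilon_k}\|^2-\|y_{\epsilon_k}\|^2)\to0$, using \eqref{4.5}. Since $(x_{\epsilon_k},y_{\epsilon_k})\to(\bar x^*,\bar y^*)$ strongly (here $\epsilon_k\to0$ follows from $\sum\alpha_k\epsilon_k<\infty$ with $\epsilon_k$ nonincreasing, at least when $\alpha_k$ is not summable), full strong convergence follows.

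\emph{Case 2.} Suppose $\|(x_k,y_k)\|<\|(\bar x^*,\bar y^*)\|$ for $k\ge K$. Every weak cluster point lies in $\mathbb S$ and has norm $\le\|(\bar x^*,\bar y^*)\|$, so it equals $(\bar x^*,\bar y^*)$ by uniqueness of the minimal norm solution. Hence $(x_k,y_k)\rightharpoonup(\bar x^*,\bar y^*)$ and $\limsup\|(x_k,y_k)\|\le\|(\bar x^*,\bar y^*)\|$, which together give strong convergence.

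\emph{General liminf statement.} If $\|(x_k,y_k)\|<\|(\bar x^*,\bar y^*)\|$ infinitely often, apply the Case 2 argument along that subsequence. Otherwise Case 1 applies eventually. Either way, $\liminf\|(x_k,y_k)-(\bar x^*,\bar y^*)\|=0$.

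The expected obstacle is the refined energy inequality: we must check that the $\epsilon$-terms in Lemma \ref{le1} combine exactly as claimed under Assumption \ref{ass} and the monotonicity of $\epsilon_k$. We must also handle boundedness when only $\sum\alpha_k\epsilon_k<\infty$ is assumed. For that, we would use the refined inequality itself, since its right side is negative whenever $\|(x_{k+1},y_{k+1})\|$ is large.
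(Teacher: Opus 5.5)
There is a genuine gap in your Case~1. The ``refined'' inequality $\mathcal E_{k+1}-\mathcal E_k\le\frac{\delta\alpha_k\beta_k\epsilon_k}{2}\bigl(\|(\bar x^*,\bar y^*)\|^2-\|(x_{k+1},y_{k+1})\|^2\bigr)$ does not follow from Lemma~\ref{le1}. Moreover, its consequence that $\mathcal E_k$ stays bounded is false. The coefficient of $\|x_{k+1}\|^2+\|y_{k+1}\|^2$ in Lemma~\ref{le1} is
\[
c_k:=\tfrac{\delta^2}{2}\bigl(\beta_{k+1}\epsilon_{k+1}-\beta_k\epsilon_k\bigr)-\tfrac{\delta\alpha_k\beta_k\epsilon_k}{2}.
\]
So your bound requires discarding $\frac{\delta^2}{2}(\beta_{k+1}\epsilon_{k+1}-\beta_k\epsilon_k)\|(x_{k+1},y_{k+1})\|^2$. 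This term is positive whenever $\beta_{k+1}\epsilon_{k+1}>\beta_k\epsilon_k$, which happens infinitely often precisely because $\beta_k\epsilon_k\to+\infty$. Indeed, the Lagrangian gap at a saddle point is nonnegative, so in Case~1
\[
\mathcal E_k\ge I_k^1\ge\tfrac{\delta^2\beta_k\epsilon_k}{2}\|(x_k,y_k)\|^2\ge\tfrac{\delta^2\beta_k\epsilon_k}{2}\|(\bar x^*,\bar y^*)\|^2\to+\infty
\]
unless $(\bar x^*,\bar y^*)=0$.

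The missing idea is to use the Case~1 sign condition against the \emph{whole} coefficient. By Assumption~\ref{ass} and $\epsilon_{k+1}\le\epsilon_k$ we have $c_k\le0$, so $c_k\|(x_{k+1},y_{k+1})\|^2\le c_k\|(\bar x^*,\bar y^*)\|^2$. Then \eqref{energyt1.1}, taken at the saddle point $(\bar x^*,\bar y^*,\bar\lambda^*)$, yields
\[
\mathcal E_{k+1}-\mathcal E_k\le\tfrac{\delta^2}{2}\bigl(\beta_{k+1}\epsilon_{k+1}-\beta_k\epsilon_k\bigr)\|(\bar x^*,\bar y^*)\|^2,\qquad k\ge K .
\]
Thus the \emph{shifted} energy $\mathcal E_k-\frac{\delta^2\beta_k\epsilon_k}{2}\|(\bar x^*,\bar y^*)\|^2$, which is the paper's $\delta^2\beta_k\hat{\mathcal E}_k$, is nonincreasing. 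Insert this into your lower bound from Proposition~\ref{pr2}. The term $\frac{\delta^2\beta_k\epsilon_k}{2}\|(\bar x^*,\bar y^*)\|^2$ cancels exactly, and you get your final estimate with $\mathcal E_K$ replaced by $\mathcal E_K-\frac{\delta^2\beta_K\epsilon_K}{2}\|(\bar x^*,\bar y^*)\|^2$. That is the paper's Case~I.

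After this repair, Case~1 needs neither boundedness nor weak cluster points, so drop that paragraph; it could not be justified anyway. Assumption~\ref{ass} gives $\beta_{k+1}\le(1+\alpha_k/\delta)\beta_k$, so $\beta_k\to+\infty$ forces $\sum_k\alpha_k=+\infty$. Hence $\sum_k\alpha_k\beta_k\epsilon_k=+\infty$, and Theorem~\ref{th3.1} never applies here. The argument of Proposition~\ref{pr1} only gives $\mathcal E_k=o(\beta_k)$, not boundedness. The same observation settles your hedge: $\epsilon_k\to0$ always holds, so \eqref{4.5} is available.

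Your Case~2 is the paper's Case~II. Your handling of the $\liminf$ claim is sound: either infinitely many iterates lie in the open ball and the Case~2 argument runs along them, or Case~1 holds eventually. This is cleaner than the paper's Case~III, which tries to extract iterates lying exactly on the sphere ``by continuity'', something a discrete sequence need not provide.

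One step there still needs justification. To put weak cluster points in $\mathbb S$ you invoke lower semicontinuity of ``the feasibility term''. However, no estimate $\|Ax_k+By_k-b\|\to0$ is available under these hypotheses, and Proposition~\ref{pr1} alone only shows that cluster points minimize $\mathcal L(\cdot,\cdot,\bar\lambda^*)$. The paper bridges this with the inclusion $\mathcal D(\bar\lambda^*)\subseteq\mathbb S$ from its introduction. That inclusion fails in general: take $f=g=0$, $B=0$, $A=I$; then $\bar\lambda^*=0$ and $\mathcal D(0)=\mathcal X\times\mathcal Y$.

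Instead, rerun the multiplier recursion from the proof of Theorem~\ref{th3.1} quantitatively. Summation by parts gives $\|h_{k+1}\|\le\|h_1\|+2\max_{j\le k+1}\|\lambda_j-\lambda_1\|$. Combine this with $\|\lambda_k-\bar\lambda^*\|^2\le\frac{2}{\delta}\mathcal E_k=o(\beta_k)$ and $\delta\beta_{k+1}\le(\alpha_k+\delta)\beta_k$ to get $\|Ax_k+By_k-b\|=o(\beta_k^{-1/2})$.
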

	\begin{proof}
		To prove the conclusion, we divided the proof into three cases based on the sign of $\| x_k\|^2+\| y_k \|^2$ and $\| \bar{x}^*\|^2+\| \bar{y}^*\|^2$.

		$\mathbf{ Case~ I}$: Suppose that there exists $K\geq 1$ such that the sequence $\{(x_k,y_k)\}_{k\geq K}$ stays in the complement of the ball ${B}\left( 0,\sqrt{ \| \bar{x}^* \|^2+\| \bar{y}^*\|^2}\right)$. This is, $\| x_k\|^2+\| y_k \|^2\geq \| \bar{x}^*\|^2+\| \bar{y}^*\|^2$. Now we define the energy function:
		\begin{align*}
			\widetilde{\mathcal{E}}_k :=& {\delta}^2{\beta}_k(\mathcal{L}(x_k,y_k,\bar\lambda^*)-\mathcal{L}(\bar x^*,\bar y^*,\bar\lambda^*)+\frac{{\epsilon}_k}{2}\left\|x_k\right\|^2
			+\frac{{\epsilon}_k}{2}\left\|y_k\right\|^2)+ \frac{1}{2}\|{{Z}^{\delta}_{k}}-\bar x^*\|^2\nonumber \\
			&+\frac{1}{2}\|{{H}^{\delta}_{k}}-\bar y^*\|^2 + \frac{\delta\gamma-1}{2}\|x_k-\bar x^*\|^2+\frac{\delta\gamma-1}{2}\|y_k-\bar y^*\|^2+ \frac{\delta}{2}\|\lambda_k-\bar\lambda^*\|^2.
		\end{align*}
		and
		\begin{equation}\label{energy3}
			\begin{aligned}
				\hat{\mathcal{E}}_k :=& \frac{\widetilde{\mathcal{E}}_k}{\delta^2\beta_k}-\frac{{\epsilon}_k}{2}\left\|\bar x^*\right\|^2
				-\frac{{\epsilon}_k}{2}\left\|\bar y^*\right\|^2 \\
				=&\mathcal{L}^{\epsilon_k}(x_k,y_k,{\bar\lambda^*}) - \mathcal{L}^{\epsilon_k}(\bar x^*,\bar y^*,{\bar\lambda^*}) + \frac{1}{2\delta^2\beta_k}\|{{Z}^{\delta}_{k}}-\bar x^*\|^2+\frac{1}{2\delta^2\beta_k}\|{{H}^{\delta}_{k}}-\bar y^*\|^2 \\
				&+\frac{\delta\gamma-1}{2\delta^2\beta_k}\|x_k-\bar x^*\|^2
				+ \frac{\delta\gamma-1}{2\delta^2\beta_k}\|y_k-\bar y^*\|^2 + \frac{1}{2\delta\beta_k}\|\lambda_k-\bar \lambda^*\|^2.
			\end{aligned}
		\end{equation}
		It is obvious that
		{\small
			\begin{equation}\label{q2.3}
				\begin{aligned}
					&\delta^2\beta_{k+1} \hat{\mathcal{E}}_{k+1}- \delta^2\beta_{k} \hat{\mathcal{E}}_{k}\\
					=&\delta^2\beta_{k+1}(\frac{\widetilde{\mathcal{E}}_{k+1}}{\delta^2\beta_{k+1}}-\frac{{\epsilon}_{k+1}}{2}\left\|\bar x^*\right\|^2
					-\frac{{\epsilon}_{k+1}}{2}\left\|\bar y^*\right\|^2)-\delta^2\beta_{k}(\frac{\widetilde{\mathcal{E}}_{k}}{\delta^2\beta_{k}}-\frac{{\epsilon}_{k}}{2}\left\|\bar x^*\right\|^2
					-\frac{{\epsilon}_{k}}{2}\left\|\bar y^*\right\|^2) \\
					=&\widetilde{\mathcal{E}}_{k+1} -\widetilde{\mathcal{E}}_{k}- \frac{\delta^2}{2}\beta_{k+1}\epsilon_{k+1}(\|\bar x^*\|^2+\|\bar y^*\|^2)+\frac{\delta^2}{2}\beta_k\epsilon_k(\|\bar x^*\|^2+\|\bar y^*\|^2).
				\end{aligned}
		\end{equation}}
		According to (\ref{energyt1.1}), we have
		\begin{align*}
			&\delta^2\beta_{k+1}\hat{\mathcal{E}}_{k+1}-\delta^2\beta_{k} \hat{\mathcal{E}}_{k} \nonumber\\
			\leq&({\delta}^2{\beta}_{k+1}-{\delta}^2{\beta}_{k}-{\delta}\alpha_k{\beta}_{k})(\mathcal{L}(x_{k+1},y_{k+1},\bar\lambda^*)
			-\mathcal{L}(x^*,y^*,\bar\lambda^*))\nonumber\\
			&+(\frac{{\delta}^2{\beta}_{k+1}{\epsilon}_{k+1}}{2}
			-\frac{{\delta}^2{\beta}_k{\epsilon}_k}{2}-\frac{{\delta}\alpha_k{\beta}_k{\epsilon}_k}{2})(\|x_{k+1}\|^2+\|y_{k+1}\|^2-\|\bar x^*\|^2-\|\bar y^*\|^2)\nonumber\\ &-\frac{1}{2}\|Z_{k+1}^\delta-Z_{k}^\delta\|^2-\frac{1}{2}\|H_{k+1}^\delta-H_{k}^\delta\|^2-\frac{\delta}{2}\|{\lambda}_{k+1}-{\lambda}_{k}\|^2\nonumber\\
			&-\frac{{\delta}^2{\beta}_k{\mu}_f}{2}\|x_{k+1}-x_k\|^2-\frac{{\delta}^2{\beta}_k{\mu}_g}{2}\|y_{k+1}-y_k\|^2\nonumber\\
			&-\frac{\delta\alpha_k\beta_k\mu_f}{2}\|Z_{k+1}^\delta-\bar x^*\|^2-\frac{\delta\alpha_k\beta_k\mu_g}{2}\|H_{k+1}^\delta-\bar y^*\|^2\nonumber\\
			&-\frac{\delta\alpha_k\beta_k\mu_f}{2}\|x_{k+1}-Z_{k+1}^\delta\|^2-\frac{\delta\alpha_k\beta_k\mu_g}{2}\|y_{k+1}-H_{k+1}^\delta\|^2\nonumber\\
			&-(\delta\gamma-1)(\frac{\delta}{a_k}+\frac{1}{2})\|x_{k+1}-x_k\|^2-(\delta\gamma-1)(\frac{\delta}{a_k}+\frac{1}{2})\|y_{k+1}-y_k\|^2.
		\end{align*}
		It follows from Assumption $\ref{ass}$ that
		\begin{align*}
			\delta^2\beta_{k+1} \hat{\mathcal{E}}_{k+1}\leq \delta^2\beta_{k} \hat{\mathcal{E}}_{k}.
		\end{align*}
		Thus, we obtain
		\begin{align*}
			\delta^2\beta_{k} \hat{\mathcal{E}_{k}}\leq \delta^2\beta_{K} \hat{\mathcal{E}}_{K},\,\,\forall k\geq K.
		\end{align*}
		Due to the definition of $\hat{\mathcal{E}}_{k}$, we obtain
		\begin{align*}
			\mathcal{L}^{\epsilon_k}(x_k,y_k,{\bar\lambda^*})- \mathcal{L}^{\epsilon_k}(\bar x^*,\bar y^*,{\bar\lambda^*}) \leq \frac{{\beta}_{K}}{\beta_k}\hat{\mathcal{E}}_{K}.
		\end{align*}
		Applying Proposition \ref{pr2}, we get
		\begin{align*}
			\|(x_k,y_k)- (x_{\epsilon_k},y_{\epsilon_k})\|^2 \leq \frac{2{\beta}_{K}}{\beta_k\epsilon_k}\hat{\mathcal{E}}_{K} +
			\|\bar x^*\|^2-\|x_{\epsilon_k}\|^2+\|\bar y^*\|^2-\|y_{\epsilon_k}\|^2.
		\end{align*}
		This together with $\lim_{k\rightarrow{+\infty}}\beta_{k}\epsilon_{k}={+\infty}$ and (\ref{4.5}) implies
		\begin{eqnarray*}
			\lim_{k\rightarrow{+\infty}}\|(x_k,y_k)-(\bar x^*,\bar y^*)\|=0.
		\end{eqnarray*}

		$\mathbf{ Case~ II}$: Suppose that there exists $K\geq 1$ such that the sequence $\{(x_k,y_k)\}_{k\geq K}$ stays in the ball ${B}\left( 0,\sqrt{ \| \bar{x}^* \|^2+\| \bar{y}^*\|^2}\right)$ such that $ \| x_k\|^2+\| y_k \|^2 < \| \bar{x}^*\|^2+\| \bar{y}^*\|^2 $. Let $ (\bar x,\bar y)\in \mathcal{X\times Y}$ be a weak sequential cluster of ${\{(x_k,y_k)\}}_{k\geq 1}$. Then, there exists a subsequence $(x_{k_j},y_{k_j})_{j\geq1}$ such that ${k_j\rightarrow{+\infty}} $ and $(x_{k_j},x_{k_j})$ converges weakly to ${(\bar x,\bar y)} $ as $ j\rightarrow +\infty$. Because $\mathcal{L}(x,y,\bar \lambda^*)$ is convex and lower semi-continuous, we have
		\begin{eqnarray*}
			\mathcal{L}(\bar x,\bar y,\bar \lambda^*)\leq \underset{j\rightarrow{+\infty}}{\lim\inf}\mathcal{L}(x_{k_j},y_{k_j},\bar \lambda^*).
		\end{eqnarray*}
		Using Proposition \ref{pr1}, we get
		\begin{eqnarray*}
			\mathcal{L}(\bar x,\bar y,\bar \lambda^*)\leq\underset{j\rightarrow{+\infty}}{\lim\inf}\mathcal{L}(x_{k_j},y_{k_j},\bar \lambda^*) = \mathcal{L}(\bar x^*,\bar y^*,\bar \lambda^*).
		\end{eqnarray*}
		This, combined with $\mathcal{L}(\bar x^*,\bar y^*,\bar \lambda^*)=\underset{x\in \mathcal X,y\in \mathcal Y}{\min}\mathcal{L}(x,y,\bar \lambda^*)$, result in
		\begin{eqnarray*}
			\mathcal{L}(\bar x,\bar y,\bar \lambda^*)\leq \mathcal{L}(\bar x^*,\bar y^*,\bar \lambda^*)=\underset{x\in \mathcal X,y\in \mathcal Y}{\min}\mathcal{L}(x,y,\bar \lambda^*)\leq\mathcal{L}(\bar x,\bar y,\bar \lambda^*).
		\end{eqnarray*}
		This means $(\bar x,\bar y)\in \mathcal D(\bar\lambda^*)$, then we have $(\bar x,\bar y)\in \mathbb S $. Using the weak lower semi-continuity of $ \|\cdot\|$, we get
		\begin{eqnarray*}
			\| ( \bar{x},\bar{y} ) \|\leq \liminf_{j\rightarrow+\infty} \|( x_{k_j},y_{k_j})\|\leq\limsup_{j\rightarrow+\infty}{ \| ( x_{k_j},y_{k_j}) \|}\leq \| (\bar{x}^*,\bar{ y}^*)\|.
		\end{eqnarray*}
		Combined with $(\bar x^*,\bar y^*)=proj_{\mathbb S} 0$, we have $(\bar x,\bar y)=(\bar x^*,\bar y^*)$. Moreover, the sequence $\{(x_{k},y_{k})\}_{k\geq1}$ has a unique weak cluster point $(\bar x^*,\bar y^*)$. Therefore, this shows that the sequence converges weakly to $(\bar x^*,\bar y^*)$.
		Thus,
		\begin{eqnarray*}
			\| ( \bar{x}^*,\bar{y}^* ) \|\leq \liminf_{j\rightarrow+\infty} \|( x_{k},y_{k})\|\leq\limsup_{j\rightarrow+\infty}{ \| ( x_{k},y_{k}) \|}\leq \| (\bar{x}^*,\bar{ y}^*)\|,
		\end{eqnarray*}
		which gives
		$$
		\lim_{k\rightarrow+\infty} \| ( x_k,y_k)\|= \| ( \bar{x}^*,\bar{y}^*)\|.
		$$
		This together with the fact that the sequence $\{(x_{k},y_{k})\}_{k\geq1}$ converges weakly to $(\bar x,\bar y)$, we obtain that the convergence is strong, that is
		\begin{eqnarray*}
			\lim_{k\rightarrow+\infty} \| ( x_k,y_k )-( \bar{x}^*,\bar{y}^* )\|=0.
		\end{eqnarray*}

		$\mathbf{ Case~ III}$: Suppose that for any integer ${K}\geq 1$, there exists $k\geq K$ such that $ \| x_k\|^2+\| y_k \|^2 < \| \bar{x}^*\|^2+\| \bar{y}^*\|^2 $ and there exists $s\geq K$ such that $ \| x_s\|^2+\| y_s \|^2 \geq \| \bar{x}^*\|^2+\| \bar{y}^*\|^2 $. By the continuity, it follows that here exists a subsequence $\{(x_{k_j},y_{k_j})\}_{j\geq1}$ of $\{(x_k,y_k)\}_{k\geq1}$ such that $ \| x_{k_j}\|^2+\| y_{k_j} \|^2 = \| \bar{x}^*\|^2+\| \bar{y}^*\|^2 $. Thus
		\begin{eqnarray}\label{xz}
			\lim_{j\rightarrow+\infty} \| ( x_{k_j},y_{k_j} )\|=\|( \bar{x}^*,\bar{y}^* )\|.
		\end{eqnarray}
		Let $ (\hat x,\hat y)\in \mathcal{X\times Y}$ be a weak sequential cluster of $\{(x_{k_j},y_{k_j})\}_{j\geq1}$. Using similar arguments in case II, we obtain that $(\hat x,\hat y)=(\bar{x}^*,\bar{y}^*)$ and $\{(x_{k_j},y_{k_j})\}_{j\geq1}$ converges weakly to $(\bar{x}^*,\bar{y}^*)$ as $j\rightarrow+\infty$. This together with \eqref{xz} yields
		\begin{eqnarray*}
			\lim_{j\rightarrow+\infty} \| ( x_{k_j},y_{k_j} )-( \bar{x}^*,\bar{y}^* )\|=0.
		\end{eqnarray*}
		As a result,
		\begin{eqnarray*}
			\underset{j\rightarrow+\infty}{\lim\inf} \| ( x_{k},y_{k} )-( \bar{x}^*,\bar{y}^* )\|=0.
		\end{eqnarray*}
		This completes the proof.
	\end{proof}

	Next, we aim to analyze the strong convergence of the sequence generated by Algorithm \ref{al_3}.
	\begin{theorem}\label{th4.3}
		Suppose that Assumption \ref{ass4} holds. Let $\{(x_k,y_k,\lambda_k)\}_{k\geq 1}$ be the sequence generated by Algorithm \ref{al_3} and  $(x^*,y^*,\lambda^*)\in\Omega$.  Assume that $ \{\epsilon_{k}\}_{k\geq1} $ is a nonincreasing sequence satisfying $\sum_{k=1}^{+\infty}\alpha_k\epsilon_{k}<{+\infty}$ and $\lim_{k\rightarrow{+\infty}}\beta_{k}\epsilon_{k}={+\infty}$, then
		\begin{eqnarray*}
			\underset{k\rightarrow{+\infty}}{\lim\inf}\|(x_k,y_k)- (\bar x^*,\bar y^*)\|=0.
		\end{eqnarray*}
		Further, if there exists an integer $K\geq1$ such that the sequence $\{x_k\}_{k\geq K}$ stays in either the open ball ${B}\left( 0,\sqrt{ \| \bar{x}^* \|^2+\| \bar{y}^*\|^2}\right)$ or its complement, then
		\begin{eqnarray*}
			\lim_{k\rightarrow{+\infty}}\|(x_k,y_k)-(\bar x^*,\bar y^*)\|=0.
		\end{eqnarray*}
	\end{theorem}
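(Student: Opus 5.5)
The argument will mirror the proof of Theorem \ref{th4.1}. The only change is that the Lyapunov-type quantity must include the extra term $\frac{\delta\alpha_k^2\beta_k^2}{2}\|B(H^\delta_k-\bar y^*)\|^2$ produced by the semi-implicit multiplier. (The hypothesis in the statement should be read as a condition on the pair $(x_k,y_k)$, as in Theorem \ref{th4.1}.) We split into the same three cases according to whether $\|x_k\|^2+\|y_k\|^2$ eventually stays $\geq$, eventually stays $<$, or oscillates around $\|\bar x^*\|^2+\|\bar y^*\|^2$.

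\textbf{Case I.} Define $\widetilde{\mathcal{E}}_k$ exactly as in the proof of Theorem \ref{th4.1}, with $(\bar x^*,\bar y^*,\bar\lambda^*)$ in place of $(x^*,y^*,\lambda^*)$. Then set
\[
\hat{\mathcal{E}}_k:=\frac{\widetilde{\mathcal{E}}_k+\frac{\delta\alpha_k^2\beta_k^2}{2}\|B(H^\delta_k-\bar y^*)\|^2}{\delta^2\beta_k}-\frac{\epsilon_k}{2}\left(\|\bar x^*\|^2+\|\bar y^*\|^2\right).
\]
The first step is to compute $\delta^2\beta_{k+1}\hat{\mathcal{E}}_{k+1}-\delta^2\beta_k\hat{\mathcal{E}}_k$ as in \eqref{q2.3}. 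To bound the result, we invoke inequality \eqref{term} from the proof of Theorem \ref{th3.4}, which already absorbs the cross term $\delta\alpha_k\beta_k\langle\lambda_{k+1}-\bar\lambda_{k+1},A(Z^\delta_{k+1}-x^*)\rangle$ via \eqref{energyt2.3}. The term $\mathbb{I}$ there is nonpositive under Assumption \ref{ass4}.

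Regrouping the $\epsilon$-terms against $\|\bar x^*\|^2+\|\bar y^*\|^2$ leaves a coefficient $\frac12(\delta^2\beta_{k+1}\epsilon_{k+1}-\delta^2\beta_k\epsilon_k-\delta\alpha_k\beta_k\epsilon_k)$. By Assumption \ref{ass4} and the monotonicity of $\epsilon_k$, this coefficient is $\leq0$. It multiplies $\|x_{k+1}\|^2+\|y_{k+1}\|^2-\|\bar x^*\|^2-\|\bar y^*\|^2$, which is $\geq 0$ in Case I. The Lagrangian-gap term is also nonpositive. Hence $\delta^2\beta_k\hat{\mathcal E}_k$ is nonincreasing for $k\geq K$.

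Dropping the nonnegative pieces gives $\mathcal{L}^{\epsilon_k}(x_k,y_k,\bar\lambda^*)-\mathcal{L}^{\epsilon_k}(\bar x^*,\bar y^*,\bar\lambda^*)\leq \beta_K\hat{\mathcal E}_K/\beta_k$. Proposition \ref{pr2} then bounds $\|(x_k,y_k)-(x_{\epsilon_k},y_{\epsilon_k})\|^2$. Finally, $\beta_k\epsilon_k\to+\infty$ together with \eqref{4.5} yields strong convergence.

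\textbf{Cases II and III.} These only use the conclusion of Proposition \ref{pr1}, which is stated for Algorithm \ref{al_3} under Assumption \ref{ass4}. Combined with weak lower semicontinuity of $\mathcal{L}(\cdot,\cdot,\bar\lambda^*)$, this shows that every weak cluster point lies in $\mathcal D(\bar\lambda^*)\subseteq\mathbb S$. The norm bound $\limsup\|(x_k,y_k)\|\leq\|(\bar x^*,\bar y^*)\|$ (along the whole sequence in Case II, along the subsequence with equality of norms in Case III) then identifies the cluster point as $\mathrm{proj}_{\mathbb S}0$. Weak convergence plus convergence of norms gives strong convergence. Boundedness, needed for extracting weak cluster points, comes from Theorem \ref{th3.4}.

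In Case III, the existence of a subsequence with norm exactly equal to $\|(\bar x^*,\bar y^*)\|$ is not literally guaranteed for a discrete sequence. I would instead take indices $k_j$ with $\|(x_{k_j},y_{k_j})\|<\|(\bar x^*,\bar y^*)\|$, which exist by the case hypothesis. The same weak-cluster argument applies along them and gives the $\liminf$ claim. This replaces the continuity step used in Theorem \ref{th4.1}.

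The main obstacle is Case I. One must verify that the added $B$-term does not interfere with the $\epsilon$-regrouping, and that the telescoping of $\frac{\delta\alpha_k^2\beta_k^2}{2}\|B(H^\delta_k-\bar y^*)\|^2$ exactly matches \eqref{energyt2.3} with $\bar y^*$ in place of $y^*$. Both hold because \eqref{term} is valid for any saddle point. A further check is that this term stays nonnegative after division by $\delta^2\beta_k$, so it can be discarded in the final bound.
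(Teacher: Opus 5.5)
Your route is the paper's. In Case I the paper also telescopes $\delta^2\beta_k\hat{\mathcal E}_k+\frac{\delta\alpha_k^2\beta_k^2}{2}\|B(H^\delta_k-\bar y^*)\|^2$ via \eqref{term} and \eqref{q2.3}; folding the $B$-term into $\hat{\mathcal E}_k$, as you do, changes nothing. It then applies Proposition \ref{pr2} and \eqref{4.5}, and delegates Cases II and III to Theorem \ref{th4.1}, i.e.\ Proposition \ref{pr1} plus weak lower semicontinuity.

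Your Case III is a genuine improvement. The paper's ``by continuity'' step, which produces indices with $\|(x_{k_j},y_{k_j})\|=\|(\bar x^*,\bar y^*)\|$, is not valid for a discrete sequence. Indices inside the open ball, by contrast, exist by hypothesis and suffice.

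Your source of boundedness is wrong, though harmlessly. Assumption \ref{ass4} gives $\beta_{k+1}\le(1+\alpha_k/\delta)\beta_k$, so $\beta_k\to+\infty$ forces $\sum_k\alpha_k=+\infty$. Then $\beta_k\epsilon_k\to+\infty$ forces $\sum_k\alpha_k\beta_k\epsilon_k=+\infty$, so Theorem \ref{th3.4} never applies under these hypotheses. Boundedness comes for free instead: from the ball in Case II and from your chosen indices in Case III. The same observation yields $\epsilon_k\to0$, which \eqref{4.5} tacitly needs.

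The genuine gap, inherited from the paper, is the identification step. For the \emph{unaugmented} Lagrangian, the inclusion $\mathcal D(\bar\lambda^*)\subseteq\mathbb S$ is false, and so is \eqref{4.5}. Tikhonov regularization of $\mathcal L(\cdot,\cdot,\bar\lambda^*)$ converges to the minimal-norm point of $\mathcal D(\bar\lambda^*)$, not of $\mathbb S$.

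For a counterexample, take $\mathcal X=\mathcal Y=\mathcal Z=\mathbb R$, $f=g\equiv0$, $A=B=1$, $b=2$, $\alpha_k=1/k$, $\beta_k=k$, $\delta\le1\le\delta\gamma$, $\epsilon_k=k^{-1/2}$. All hypotheses hold, and $(\bar x^*,\bar y^*)=(1,1)$. The only multiplier is $\bar\lambda^*=0$, so $\mathcal D(\bar\lambda^*)=\mathbb R^2$ and $(x_\epsilon,y_\epsilon)=(0,0)$ for every $\epsilon$. Your Case I bound then only says $\|(x_k,y_k)\|^2\le2+o(1)$. In Case II, the point $(0,0)$ passes every test you impose on a weak cluster point.

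What is missing is feasibility, $Ax_k+By_k-b\to0$, which Theorem \ref{th3.4} cannot supply here. It can be proved directly:
\begin{itemize}
\item The Kronecker estimate in Proposition \ref{pr1} gives $\|\lambda_k-\lambda^*\|^2=o(\beta_k)$.
\item Take $h_k,b_k$ as in Theorem \ref{th3.1}, $S_k=\sum_{i\le k}b_ih_i$ and $q_k=h_{k+1}+S_k=\lambda_{k+1}-\lambda_1+h_1$. Then $S_{k+1}=(1-b_{k+1})S_k+b_{k+1}q_k$.
\item Hence $\|h_{k+1}\|\le2\max\{\|S_1\|,\max_{i\le k}\|q_i\|\}=o(\sqrt{\beta_{k+1}})$.
\item Using $(\alpha_k+\delta)\beta_k\ge\delta\beta_{k+1}$, this gives $\|Ax_{k+1}+By_{k+1}-b\|=o(\beta_{k+1}^{-1/2})$.
\end{itemize}

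Feasible points of $\mathcal D(\bar\lambda^*)$ lie in $\mathbb S$, so with this your Case II/III identification goes through. Case I then no longer needs \eqref{4.5at}:
\begin{itemize}
\item Your bound $\hat{\mathcal E}_k\le\beta_K\hat{\mathcal E}_K/\beta_k$ gives $\|(x_k,y_k)\|^2-\|(\bar x^*,\bar y^*)\|^2\le2\beta_K\hat{\mathcal E}_K/(\beta_k\epsilon_k)\to0$.
\item So the norms converge, and the Case II argument finishes.
\end{itemize}
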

	\begin{proof}
		Following the same structure as the proof of Theorem \ref{th4.1}, we divide our proof into three cases. Now, we just focus on proving Case I where $\| x_k\|^2+\| y_k \|^2\geq \| \bar{x}^*\|^2+\| \bar{y}^*\|^2$, for all $k\geq K$. Our proof still uses this energy function $(\ref{energy3})$. By the proof of Theorem \ref{th3.4} and (\ref{q2.3}), we have
		\begin{align*}
			&\delta^2\beta_{k+1}\hat{\mathcal{E}}_{k+1}+\frac{\delta\alpha_{k+1}^2{\beta}_{k+1}^2}{2}\|B({H}^{\delta}_{k+1}-\bar y^*)\|^2 - (\delta^2\beta_{k}\hat{\mathcal{E}}_k  + \frac{\delta\alpha_{k}^2{\beta}_{k}^2}{2}\|B({H}^{\delta}_{k}-\bar y^*)\|^2)\nonumber\\
			\leq&({\delta}^2{\beta}_{k+1}-{\delta}^2{\beta}_{k}-{\delta}\alpha_k{\beta}_{k})(\mathcal{L}(x_{k+1},y_{k+1},\bar \lambda^*)-\mathcal{L}(x^*,y^*,\bar \lambda^*))\nonumber\\
			&+(\frac{{\delta}^2{\beta}_{k+1}{\epsilon}_{k+1}}{2}
			-\frac{{\delta}^2{\beta}_k{\epsilon}_k}{2}-\frac{{\delta}\alpha_k{\beta}_k{\epsilon}_k}{2})(\|x_{k+1}\|^2+\|y_{k+1}\|^2-\|\bar x^*\|^2-\|\bar y^*\|^2)\nonumber\\
			&-\frac{1}{2}\|Z_{k+1}^\delta-Z_{k}^\delta\|^2-\frac{1}{2}\|H_{k+1}^\delta-H_{k}^\delta\|^2 -\frac{{\delta}^2{\beta}_k{\mu}_f}{2}\|x_{k+1}-x_k\|^2-\frac{{\delta}^2{\beta}_k{\mu}_g}{2}\|y_{k+1}-y_k\|^2\nonumber\\
			&-\frac{\delta\alpha_k\beta_k\mu_g}{2}\|H_{k+1}^\delta-\bar y^*\|^2+\frac{\delta(\alpha_{k+1}^2\beta_{k+1}^2-\alpha_{k}^2\beta_k^2)}{2}\|B(H_{k+1}^\delta-\bar y^*)\|^2\nonumber\\
			&-\frac{\delta\alpha_k\beta_k\mu_f}{2}\|Z_{k+1}^\delta-\bar x^*\|^2-\frac{\delta\alpha_k\beta_k\mu_f}{2}\|x_{k+1}-Z_{k+1}^\delta\|^2-\frac{\delta\alpha_k\beta_k\mu_g}{2}\|y_{k+1}-H_{k+1}^\delta\|^2\nonumber\\
			&-(\delta\gamma-1)(\frac{\delta}{a_k}+\frac{1}{2})\|x_{k+1}-x_k\|^2-(\delta\gamma-1)(\frac{\delta}{a_k}+\frac{1}{2})\|y_{k+1}-y_k\|^2.
		\end{align*}
		From Assumption \ref{ass4}, we get
		\begin{align*}
			\delta^2\beta_{k+1}\hat{\mathcal{E}}_{k+1}&+\frac{\delta\alpha_{k+1}^2\beta_{k+1}^2}{2}\|B({H}^{\delta}_{k+1}-\bar y^*)\|^2 \leq \delta^2\beta_{k}\hat{\mathcal{E}}_k  + \frac{\delta\alpha_k^2\beta_{k}^2}{2}\|B({H}^{\delta}_{k}-\bar y^*)\|^2.
		\end{align*}
		It yields
		\begin{eqnarray*}
			\delta^2\beta_{k}\hat{\mathcal{E}}_{k} \leq \delta^2\beta_{K}\hat{\mathcal{E}}_{K} +\frac{\delta\alpha_{K}^2 \beta_{K}^2}{2}\|B({H}^{\delta}_{K}-\bar y^*)\|^2.
		\end{eqnarray*}
		This together with \eqref{energy3} implies
		\begin{eqnarray*}
			\mathcal{L}^{\epsilon_k}(x_k,y_k,{\bar\lambda^*})- \mathcal{L}^{\epsilon_k}(\bar x^*,\bar y^*,{\bar\lambda^*}) \leq
			\frac{{\beta}_{K}}{\beta_k}\hat{\mathcal{E}}_{K}+\frac{\alpha_{K}^2{\beta}_{K}^2}{2\delta\beta_k}\|B({H}^{\delta}_{K}-\bar y^*)\|^2.
		\end{eqnarray*}
		Then applying Proposition \ref{pr2}, we obtain
		\begin{align*}
			\|(x_k,y_k)- (x_{\epsilon_k},y_{\epsilon_k})\|^2 \leq&\frac{2{\beta}_{K}}{\beta_k\epsilon_k}\hat{\mathcal{E}}_{K} + \frac{\alpha_{K}^2{\beta}_{K}^2}{\delta\beta_k\epsilon_k} \|B({H}^{\delta}_{K}-\bar y^*)\|^2\nonumber\\
			&+\|\bar x^*\|^2-\|x_{\epsilon_k}\|^2+\|\bar y^*\|^2-\|y_{\epsilon_k}\|^2.
		\end{align*}
		Combining with $\lim_{k\rightarrow{+\infty}}\beta_{k}\epsilon_{k}={+\infty}$, we have
		\begin{eqnarray*}
			\lim_{k\rightarrow{+\infty}}\|(x_k,y_k)-(\bar x^*,\bar y^*)\|=0.
		\end{eqnarray*}

		The proof for Case II and Case III are similar to Theorem \ref{th4.1}, we shall omit them here for conciseness.
	\end{proof}
	
	\section{Reduction to a non-separable optimization problem}
	
	To solve the non-separable linearly constrained optimization problem (\ref{P2}), we obtain a special case of Algorithm \ref{al_1} that reduces to the following Algorithm \ref{al_4}.
	\begin{algorithm}
		\caption{Primal-dual Algorithm}
		\label{al_4}
		{\bf Initialize:} Let $x_1=x_0=\frac{1}{\gamma} Z_1, {\lambda}_1={\lambda}_0$, $\delta>0,\gamma>0$. \\
		\For{$k = 1, 2,\cdots$}{
			{\bf Step1:} Let $\theta_k=(\alpha_k+{\delta})\beta_k$ and $\eta_{f,k}=\gamma+\frac{1}{\alpha_k}+\mu_f\delta\beta_k$\\
			$\widetilde{\lambda}_k = \lambda_k -\delta\beta_k(Ax_k-b)$,\\
			$\widetilde{x}_k=x_k+\frac{Z_k-\gamma x_k}{\eta_{f,k}},$\\
			$x_{k+1} = \mathop{\arg\min}_{x\in\mathcal{X}}\left(\mathcal{L}_{\theta_k}(x,\widetilde{\lambda}_k)
			+\frac{\eta_{f,k}}{2\alpha_k\beta_k}\left\|x-\widetilde{x}_k\right\|^2
			+\frac{{\epsilon}_k}{2}\left\|x\right\|^2\right).$\\
			{\bf Step2:}${Z}_{k+1} =({\gamma+\frac{1}{\alpha_k}})x_{k+1}-\frac{1}{\alpha_k}x_k,$\\
			${Z}^\delta_{k+1}=\delta{Z}_{k+1}+(1-\delta\gamma)x_{k+1},$\\
			$\lambda_{k+1} = \lambda_k + \alpha_k\beta_{k}(A{Z}^\delta_{k+1}-b)$.
		}
	\end{algorithm}
	
	This algorithm is equivalent to discrete the dynamical system (\ref{dyn}) with only one primal variable and one dual variable. Construct the energy sequence $\{\mathcal E_k\}_{k\geq1}$ of Algorithm \ref{al_4} as
	\begin{align*}
		\mathcal{E}_k = & \delta^2\beta_k\left(\mathcal{L}(x_k,\lambda^*)-\mathcal{L}(x^*,\lambda^*)+\frac{\epsilon_k}{2}\|x_k\|^2\right) + \frac{1}{2}\|Z^\delta_k - x^*\|^2 + \frac{\delta\gamma-1}{2}\|x_k-x^*\|^2 + \frac{\delta}{2}\|\lambda_k-\lambda^*\|^2.
	\end{align*}

	And we can reformulate Assumption $\ref{ass}$ as follows.
	\begin{assumption}\label{ass6}
		Suppose that $f$ is a proper, closed and lower semi-continuous function. $\{{\beta}_k\}_{k\geq 1}$ is a positive and nondecreasing sequence and $\lim_{k\rightarrow{+\infty}}\beta_{k}={+\infty}$. $\{{\alpha}_k\}_{k\geq 1}$ is a positive sequence. For every $k\geq 1 $, the parameters $\delta,\gamma$ and the sequence ${{\beta}_k}$ satisfy
		\begin{eqnarray*}
			{\delta\gamma}-1\geq0,\qquad\delta{\beta}_{k+1}\leq\delta{\beta}_{k}+\alpha_k{\beta}_{k}.
		\end{eqnarray*}
	\end{assumption}

	Then utilizing the same argument in Theorems \ref{th3.1} and \ref{th4.1}, we obtain the following convergence rate of Algorithm $\ref{al_4}$.
	\begin{corollary}\label{th5.1}
		Suppose that Assumption \ref{ass6} holds. Let $\{(x_k,\lambda_k)\}_{k\geq 1}$ be the sequence generated by Algorithm \ref{al_4} and  $(x^*,\lambda^*)\in\Omega$.  Assume that $ \{\epsilon_{k}\}_{k\geq1} $ is a nonincreasing sequence and $\sum_{k=1}^{+\infty}\alpha_k\beta_{k}\epsilon_{k}<{+\infty}$, then we have the sequence $ \{(x_{k},\lambda_k)\}_{k\geq1} $ and the following statements:
		\begin{eqnarray*}
			\begin{aligned}
				&\mathcal{L} ( x_k,\lambda ^*)-\mathcal{L} ( x^*,\lambda ^* )=\mathcal{O}\left( \frac{1}{\beta_k} \right),\\
				\| f ( x_k)-& f ( x^*)\|=\mathcal{O} \left( \frac{1}{\beta_k} \right),\,\,\,\left \| Ax_k-b\right \|=\mathcal{O}\left ( \frac{1}{\beta_k}\right ).\\
			\end{aligned}
		\end{eqnarray*}
	\end{corollary}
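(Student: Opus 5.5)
The plan is to treat Algorithm \ref{al_4} as the special case of Algorithm \ref{al_1} in which the variable $y$ is absent. Formally we set $\mathcal Y=\{0\}$, $g\equiv 0$, $B=0$ and $\mu_g=0$, so that $H_k=H_k^\delta=0$ and $y_k\equiv 0$. The first step is to check that, under this specialization, Algorithm \ref{al_4} coincides with scheme (\ref{disc}) with the implicit multiplier (\ref{label1}). This is Proposition \ref{pro1} verbatim: the optimality condition of the $x$-subproblem, combined with $\lambda_{k+1}=\widetilde\lambda_k+(\alpha_k+\delta)\beta_k(Ax_{k+1}-b)$, gives $\frac{Z_{k+1}-Z_k}{\alpha_k}\in-\beta_k(\partial f(x_{k+1})+A^\top\lambda_{k+1}+\epsilon_kx_{k+1}-\mu_f(x_{k+1}-Z^\delta_{k+1}))$. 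The energy $\mathcal E_k$ defined before Assumption \ref{ass6} is exactly (\ref{energy}) with the $y$-terms removed.

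Next I will rerun Lemma \ref{le1} with all $y$, $H$ and $B$ terms deleted. Its proof uses only the strong-convexity inequality (\ref{q}) in the $x$-variable and the identity (\ref{ds}). This gives the one-variable analogue of (\ref{energybds}). Since $\bar\lambda_{k+1}=\lambda_{k+1}$, the cross term $-\delta\alpha_k\beta_k\langle\lambda_{k+1}-\lambda^*,A(Z^\delta_{k+1}-x^*)\rangle$ cancels exactly against $\delta\langle\lambda_{k+1}-\lambda_k,\lambda_{k+1}-\lambda^*\rangle$, by the update $\lambda_{k+1}-\lambda_k=\alpha_k\beta_k(AZ^\delta_{k+1}-b)$ and $Ax^*=b$. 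Assumption \ref{ass6} gives $\delta^2\beta_{k+1}-\delta^2\beta_k-\delta\alpha_k\beta_k\le 0$. Together with $\mathcal L(x_{k+1},\lambda^*)\ge\mathcal L(x^*,\lambda^*)$ and the monotonicity of $\epsilon_k$, this makes the coefficients of the Lagrangian gap and of $\|x_{k+1}\|^2$ nonpositive. The result is $\mathcal E_{k+1}-\mathcal E_k\le\frac{\delta\alpha_k\beta_k\epsilon_k}{2}\|x^*\|^2$, and summability bounds $\mathcal E_k$ uniformly. Reading off the terms of $\mathcal E_k$ then yields boundedness of $(x_k,\lambda_k)$ and $\mathcal L(x_k,\lambda^*)-\mathcal L(x^*,\lambda^*)\le(\mathcal E_1+C)/(\delta^2\beta_k)$.

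For feasibility, I will follow the telescoping argument of Theorem \ref{th3.1}. Write $\lambda_{k+1}-\lambda_1=h_{k+1}-h_1+\sum_{i=1}^k b_ih_i$ with $h_k=\beta_{k-1}(\alpha_{k-1}+\delta)(Ax_k-b)$ and $b_k=1-\frac{\delta\beta_k}{(\alpha_{k-1}+\delta)\beta_{k-1}}$. Here $b_k\in[0,1)$ by Assumption \ref{ass6} and $\beta_k>0$. Boundedness of $\lambda_k$ and Lemma \ref{A.1} give $\sup_k\|h_k\|<\infty$. Hence $\|Ax_k-b\|\le C/((\alpha_{k-1}+\delta)\beta_{k-1})\le C/(\delta\beta_k)$, where I again use $\delta\beta_k\le(\alpha_{k-1}+\delta)\beta_{k-1}$. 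Finally, $f(x_k)-f(x^*)=\mathcal L(x_k,\lambda^*)-\mathcal L(x^*,\lambda^*)-\langle\lambda^*,Ax_k-b\rangle$ bounds $|f(x_k)-f(x^*)|$ by $\mathcal O(1/\beta_k)$.

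The only step needing real care is verifying that no hidden $y$- or $B$-coupling survives in the reduced energy estimate, in particular that the $\lambda$ cross terms cancel exactly. This is immediate because the implicit multiplier leaves no residual term of the kind bounded by Assumption \ref{ass4} in Theorem \ref{th3.4}. Everything else is a direct transcription of Theorem \ref{th3.1}.
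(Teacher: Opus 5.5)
Your proposal is correct and follows the paper's route, which transfers the proof of Theorem \ref{th3.1} to the case without $y$: the multiplier cross terms cancel exactly under the implicit multiplier, the energy inequality is summed using $\sum\alpha_k\beta_k\epsilon_k<+\infty$, and feasibility comes from the $h_k,b_k$ telescoping with Lemma \ref{A.1}. You also make explicit the step $(\alpha_{k-1}+\delta)\beta_{k-1}\ge\delta\beta_k$, which the paper leaves implicit. One minor caveat, shared with the paper: if $\delta\gamma=1$ the term $\frac{\delta\gamma-1}{2}\|x_k-x^*\|^2$ vanishes, so boundedness of $\{x_k\}$ cannot be read off $\mathcal E_k$; it still follows from $x_k=\frac{\alpha_{k-1}}{\alpha_{k-1}+\delta}Z^\delta_k+\frac{\delta}{\alpha_{k-1}+\delta}x_{k-1}$ with $\{Z^\delta_k\}$ bounded, and none of the three stated rates needs it.
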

	\begin{corollary}\label{th5.2}
		Suppose that Assumption \ref{ass6} holds. Let $\{(x_k,\lambda_k)\}_{k\geq 1}$ be the sequence generated by Algorithm \ref{al_4} and  $(x^*,\lambda^*)\in\Omega$.  Assume that $ \{\epsilon_{k}\}_{k\geq1} $ is a nonincreasing sequence satisfying $\sum_{k=1}^{+\infty}\alpha_k\epsilon_{k}<{+\infty}$ and $\lim_{k\rightarrow{+\infty}}\beta_{k}\epsilon_{k}={+\infty}$, then
		\begin{eqnarray*}\label{q5.1}
			\underset{k\rightarrow{+\infty}}{\lim\inf}\|x_k- \bar x^*\|=0.
		\end{eqnarray*}
		Further, if there exists an integer $K\geq1$ such that the sequence $\{x_k\}_{k\geq K}$ stays in either the open ball ${B}\left( 0,\| \bar{x}^* \|\right)$ or its complement, then
		\begin{eqnarray*}\label{q5.2}
			\lim_{k\rightarrow{+\infty}}\|x_k-\bar x^*\|=0.
		\end{eqnarray*}
	\end{corollary}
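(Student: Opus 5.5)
The plan is to transfer the proof of Theorem \ref{th4.1} to the one-block setting by taking the second block to be trivial: set $\mathcal Y=\{0\}$ and $g\equiv 0$. Then Algorithm \ref{al_1} reduces to Algorithm \ref{al_4}, $\Omega$ becomes the saddle point set of $\mathcal L(x,\lambda)=f(x)+\langle\lambda,Ax-b\rangle$, and $\bar x^*=proj_{\mathbb S}0$ with some $\bar\lambda^*$ such that $(\bar x^*,\bar\lambda^*)\in\Omega$.

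First, I would check that Lemma \ref{le1}, with all $y$, $H$ and $B$ terms removed, holds for the iterates of Algorithm \ref{al_4}. The argument is Proposition \ref{pro1} with only the $x$-block: Step 1 of Algorithm \ref{al_4} gives the inclusion in (\ref{disc}) with $\bar\lambda_{k+1}=\lambda_{k+1}$, and the coupling terms cancel exactly as in Theorem \ref{th3.1}. With the one-block energy $\widetilde{\mathcal E}_k$ built at $(\bar x^*,\bar\lambda^*)$, this yields the analogue of (\ref{energyt1.1}).

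Second, I would establish the one-block versions of Proposition \ref{pr1} and Proposition \ref{pr2}.
\begin{itemize}
\item For Proposition \ref{pr1}, the one-step inequality gives $\mathcal E_k\le \mathcal E_1+\frac{\|x^*\|^2}{2}\sum_{i<k}\delta\alpha_i\beta_i\epsilon_i$. Dividing by $\delta^2\beta_k$ and applying Lemma \ref{A.2} with $\varphi_k=\alpha_k\epsilon_k$ and $\psi_k=\beta_k$ gives $\mathcal L(x_k,\lambda^*)-\mathcal L(x^*,\lambda^*)\to0$.
\item For Proposition \ref{pr2}, I would use the $\epsilon$-strong convexity of $\mathcal L^{\epsilon}(\cdot,\bar\lambda^*)$ at its minimizer $x_\epsilon$. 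This gives $\mathcal L^{\epsilon_k}(x_k)-\mathcal L^{\epsilon_k}(x_{\epsilon_k})\ge\frac{\epsilon_k}{2}\|x_k-x_{\epsilon_k}\|^2$. Separately, $\mathcal L(x_{\epsilon_k},\bar\lambda^*)\ge\mathcal L(\bar x^*,\bar\lambda^*)$. Combining the two yields the stated inequality, and the Tikhonov facts $\|x_\epsilon\|\le\|\bar x^*\|$ and $x_\epsilon\to\bar x^*$ carry over.
\end{itemize}

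Third, I would split into the same three cases as in Theorem \ref{th4.1}.
\begin{itemize}
\item \textbf{Case I} ($\|x_k\|\ge\|\bar x^*\|$ eventually): define $\hat{\mathcal E}_k$ as in (\ref{energy3}). The coefficient of $\|x_{k+1}\|^2-\|\bar x^*\|^2$ is nonpositive because $\epsilon_k$ is nonincreasing and $\delta\beta_{k+1}\le\delta\beta_k+\alpha_k\beta_k$. Likewise, the coefficient $\delta^2\beta_{k+1}-\delta^2\beta_k-\delta\alpha_k\beta_k\le0$ multiplies the nonnegative quantity $\mathcal L(x_{k+1},\bar\lambda^*)-\mathcal L(\bar x^*,\bar\lambda^*)$. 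Hence $\beta_k\hat{\mathcal E}_k$ is nonincreasing, so $\|x_k-x_{\epsilon_k}\|^2\le \frac{2\beta_K\hat{\mathcal E}_K}{\beta_k\epsilon_k}+\|\bar x^*\|^2-\|x_{\epsilon_k}\|^2\to0$, using $\beta_k\epsilon_k\to\infty$.
\item \textbf{Case II} ($\|x_k\|<\|\bar x^*\|$ eventually): the sequence is bounded, and any weak cluster point $\bar x$ satisfies $\mathcal L(\bar x,\bar\lambda^*)\le\min\mathcal L(\cdot,\bar\lambda^*)$ by lower semicontinuity and the one-block Proposition \ref{pr1}. So $\bar x\in\mathbb S$ with $\|\bar x\|\le\|\bar x^*\|$, which forces $\bar x=\bar x^*$. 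Weak convergence together with convergence of norms then gives strong convergence.
\item \textbf{Case III} (the sequence alternates between the ball and its complement infinitely often): extract a subsequence whose norms tend to $\|\bar x^*\|$ and apply the Case II argument to it. This gives the $\liminf$ statement, which Cases I and II also give trivially.
\end{itemize}

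The main obstacle I anticipate is in Case III and in the boundedness needed for Case II. In the separable theorem the authors claim a subsequence with norm exactly equal to $\|\bar x^*\|$ "by continuity", which is not justified for a discrete sequence. Instead, I would take the indices $k_j$ where the sequence crosses from outside the ball to inside. For those indices, the Case I monotonicity controls $\mathcal L^{\epsilon}$ just before the crossing, and $\|x_{k_j+1}-x_{k_j}\|\to0$ follows from the summed $(\delta\gamma-1)\frac{\delta}{\alpha_k}\|x_{k+1}-x_k\|^2$ terms. Together these should yield $\limsup\|x_{k_j}\|\le\|\bar x^*\|$ along a subsequence, which is all the Case II argument needs. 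Boundedness of $\{x_k\}$ itself comes from the energy bound, as in Theorem \ref{th3.1}.
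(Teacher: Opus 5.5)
Your reduction ($\mathcal Y=\{0\}$, $g\equiv0$) and your Cases I and II are what the paper means by ``the same argument in Theorems~\ref{th3.1} and~\ref{th4.1}'', which is its entire proof. You are also right that the paper's Case III step, a subsequence with $\|x_{k_j}\|=\|\bar x^*\|$ ``by continuity'', is unjustified for a discrete sequence. Your replacement, however, does not go through. In Theorem~\ref{th3.1}, both the summability of $(\delta\gamma-1)\frac{\delta}{\alpha_k}\|x_{k+1}-x_k\|^2$ and the boundedness of the iterates come from $\sup_k\mathcal E_k<\infty$, which is obtained from $\sum_k\alpha_k\beta_k\epsilon_k<\infty$. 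Under the hypotheses of this corollary that series always diverges. Indeed, $\beta_k\epsilon_k\to\infty$ gives $\alpha_k\le\alpha_k\beta_k\epsilon_k$ for large $k$, so convergence would force $\sum_k\alpha_k<\infty$. On the other hand, $\beta_{k+1}\le(1+\alpha_k/\delta)\beta_k$ together with $\beta_k\to\infty$ forces $\sum_k\alpha_k=\infty$. What the energy inequality gives here is only $\mathcal E_k=o(\beta_k)$ (the estimate behind Proposition~\ref{pr1}), which yields neither $\|x_{k+1}-x_k\|\to0$ nor boundedness. Two further problems: the term you sum vanishes when $\delta\gamma=1$, which Assumption~\ref{ass6} allows; and the Case I monotonicity of $\beta_k\hat{\mathcal E}_k$ holds only for steps landing outside the ball, so it gives no control at your crossing indices. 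The fix is simpler than your route. In Case III there are infinitely many $k$ with $\|x_k\|<\|\bar x^*\|$, and along that subsequence boundedness and $\limsup_j\|x_{k_j}\|\le\|\bar x^*\|$ hold by construction. That is all the Case II argument uses, so it gives strong convergence of this subsequence to $\bar x^*$ and hence the $\liminf$ claim, with no crossing indices, step-length control or global boundedness.

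Separately, two steps you take from the paper rest on its assertion $\mathcal D(\bar\lambda^*)\subseteq\mathbb S$: your Case II inference ``$\mathcal L(\bar x,\bar\lambda^*)\le\min\mathcal L(\cdot,\bar\lambda^*)$, so $\bar x\in\mathbb S$'', and the Tikhonov fact $x_\epsilon\to\bar x^*$ used in Case I. That assertion is false in general. For $\mathcal X=\mathbb R$, $f\equiv0$ and the constraint $x=1$, one has $\bar\lambda^*=0$, $\mathcal D(0)=\mathbb R$ and $x_\epsilon=0\not\to1=\bar x^*$; Tikhonov regularization converges to the minimal-norm point of $\mathcal D(\bar\lambda^*)$, not of $\mathbb S$. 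This gap is inherited from the paper, not introduced by you, but a self-contained proof needs feasibility of weak cluster points, and it is available here. From $\mathcal E_k=o(\beta_k)$ you get $\|\lambda_k\|=o(\sqrt{\beta_k})$. Now run the telescoping behind Lemma~\ref{A.1} with a growing bound: set $u_k=\sum_{i\le k}b_ih_i$ and $s_k=\lambda_{k+1}-\lambda_1+h_1$. The recursion $u_{k+1}=(1-b_{k+1})u_k+b_{k+1}s_k$ gives $\|h_{k+1}\|\le2\max(\|u_1\|,\max_{i\le k}\|s_i\|)$. Hence $(\alpha_k+\delta)\beta_k\|Ax_{k+1}-b\|=o(\sqrt{\beta_{k+1}})$, and $\|Ax_k-b\|\to0$ follows from $\beta_{k+1}\le(1+\alpha_k/\delta)\beta_k$. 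With feasibility, Case I can also bypass $x_\epsilon$. The monotonicity of $\beta_k\hat{\mathcal E}_k$ gives $\frac{\epsilon_k}{2}(\|x_k\|^2-\|\bar x^*\|^2)\le\beta_K\hat{\mathcal E}_K/\beta_k$, so $\limsup_k\|x_k\|\le\|\bar x^*\|$, and the Case II argument then finishes all three cases.
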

	\begin{remark}
		For $\beta_k > k^2$, Algorithm \ref{al_4} achieves the convergence rate $O(1/k^{p})$ with $p > 2$, which faster than $O(1/k^2)$ result in \cite{42}. Based on Assumption \ref{ass6}, we can establish an exponential convergence rate of $O( (1 + \frac{\alpha_{\min}}{\delta})^{-k} )$ for the primal-dual gap, where $\alpha_k \geq \alpha_{\min} > 0$. When $\{\alpha_k\}_{k\geq1}$ is fixed to one, this rate coincides with the $O((1+\frac{1}{\delta})^{-k})$ rate presented in \cite{13}.
	\end{remark}
	
	\section{Numerical experiments}
	
	To validate the effectiveness of Algorithm \ref{al_3} (referred to as PDSA), we conduct two numerical experiments in this section. All compared algorithms are parameterized according to their theoretical convergence guarantees. The optimal values are computed via CVX. All codes are performed on a PC (with 3.10GHz Intel Core i5-11300H and 16GB memory).
	\begin{example} \cite[Example 1]{txz} Consider the least absolute deviation (LAD) regression problem:
		\begin{equation}\label{szsy1}
			\underset{x\in \mathcal{R}^n}\min\,f(x)+\|Mx-b\|_1,
		\end{equation}
		where $M \in \mathbb{R}^{m \times n}$ and $b \in \mathbb{R}^m$, with the dimension constraint $m \ll n$. We examine two cases, each with two dimensions. In Case 1, the matrix $M$ is generated from $\mathcal{N}(0,1)$ and its rows are normalized to unit norm. In Case 2, the matrix $M$ is generated identically to Case 1, but with $50\%$ correlated columns in $K$. The vector $b$ is constructed as $b = M\underline{x} + \varrho$, where $\underline{x}$ denotes a sparse vector and $\varrho$ represents Gaussian noise with variance $\sigma^2 = 1e-4$ and $10\%$ nonzero entries. For ADMM, a reformulation is applied with $y:= Mx-b$ in (\ref{szsy1}). In contrast, for algorithms such as PDSA, Semi-APD, New-PDA1 and New-PDA2, we reformulate problem (\ref{szsy1}) into the constrained form:
		\begin{equation*}
			\begin{array}{ll}
				&\min\limits_{x, y}~f(y)+\|x-b\|_1\\
				&~~\mbox{s.t.}~~~x-My=0.
			\end{array}
		\end{equation*}

		In the following, we use the composite objective residual, the violation of feasibility and the objective residual to assess convergence behaviors.

		$\mathbf{ Case~ I}(LAD-LASSO)$: We define the function $f(x)$ in equation \eqref{szsy1} as $ f(x) = \lambda \| x \|_1 $ with a regularization parameter $\lambda:=0.2$. We conduct a comparison of these algorithms below:
		\begin{itemize}
			\item the alternating direction method of multipliers (ADMM) \cite{11}: $\rho=0.1$ or $1$ or $10$;
			\item the Chambolle-Pock's method (CP) \cite{cxz}: $\rho=0.1$ or $1$ or $10$, $\gamma=0.999$, $\theta=1$, $\tau=\frac{\gamma}{\rho\|M\|^2}$;
			\item the new primal-dual algorithm (New-PDA1) \cite[Algorithm 1]{txz}: $c=1$ or $2$, $\gamma=0.999$, $\rho_0=\frac{ \sqrt{\gamma/(1 - \gamma)}\|y_0 - y^*\|}{\|M\|\|x_0 - x^*\|}$, $\rho_k=\frac{\rho_0}{\tau_k}$, $\beta_k=\frac{\gamma}{\|M\|^2\rho_k}$, $\eta_k=(1-\gamma)\rho_k$, $\tau_k=\frac{c}{k+c}$;
			\item the semi-implicit scheme (Semi-APD) \cite[scheme (3.20)]{30}: $\alpha_k=\frac{\sqrt{\theta_k\beta_k}}{\|M\|}$, $\theta_k=\frac{\theta_{k-1}}{1+\alpha_{k-1}}$, $\gamma_k=\frac{\gamma_{k-1}}{1+\alpha_{k-1}}$, $\beta_k=\frac{\beta_{k-1}}{1+\alpha_{k-1}}$;
			\item the primal-dual splitting algorithm (PDSA) [Algorithm \ref{al_3}]: $\gamma=2$, $\delta=0.6$, $\alpha_k=\frac{1}{k}$, $\beta_k=k$, $\epsilon_k=\frac{1}{k^3}$.
		\end{itemize}

		We test different parameter settings for the ADMM and CP algorithms. Under these settings, ADMM and CP exhibit an ergodic convergence rate of $O(1/k)$, while New-PDA1, Semi-APD and our PDSA achieve a non-ergodic convergence rate of $O(1/k)$. In this numerical experiment, algorithms with non-ergodic convergence are faster than those with ergodic convergence under the same convergence rate. As shown in Figure 1, our PDSA achieves superior performance compared to other algorithms.
		\begin{figure}
			\centering
			\includegraphics[width=.9\textwidth]{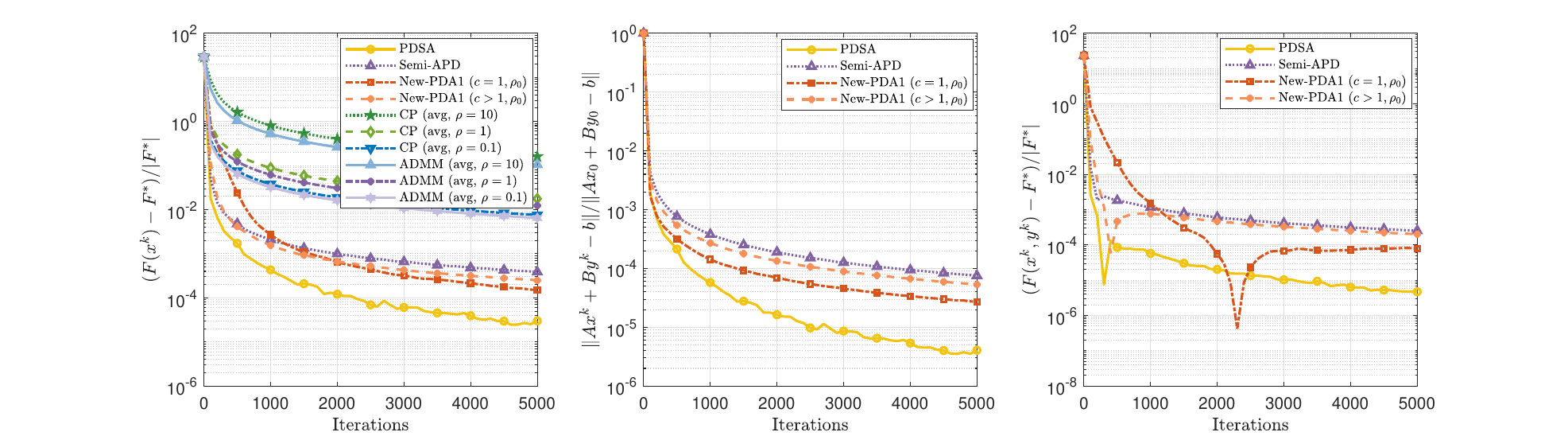}
			\caption{The convergence behaviors of algorithms for problem (\ref{szsy1}) when $f$ is a convex function. The problem size is $(m, n) = (300, 3000)$.}
			\label{fig1}
		\end{figure}
		
		$\mathbf{ Case~ II}(LAD-Elastic Net)$: We define the function $f(x)$ in equation \eqref{szsy1} as $ f(x) = \lambda \| x \|_1+\frac{\mu}{2}\| x \|^2 $ with $\lambda:=0.2$ and $\mu:= 0.2$. We compare these algorithms:
		\begin{itemize}
			\item the strongly convex variant of CP (CP-scvx) \cite{cxz2}: $\rho=\rho_{cp}=\frac{0.1}{\|M\|}$ or $\frac{1}{\|M\|}$ or $\frac{10}{\|M\|}$, $\gamma=0.999$, $\theta=1$, $\tau=\frac{\gamma}{\rho\|M\|^2}$;
			\item the new primal-dual algorithm (New-PDA2) \cite[Algorithm 2]{txz}: $(i)$ $c=2$, $\gamma=0.999$, $\Gamma=2-\frac{1}{\gamma}$, $\rho_0=\rho_0^1=\frac{ \Gamma\mu}{2\|M\|^2}$, $\rho_k=\frac{\rho_0}{\tau_k^2}$, $\beta_k=\frac{\Gamma}{\rho_k\|M\|^2}$, $\eta_k=(1-\gamma)\rho_k$, $\tau_0=1$, $\tau_{k+1}=\frac{\tau_{k}}{2}(\sqrt{\tau_k^2+4}-\tau_k)$; $(ii)$ $c=4$, $\gamma=0.75$, $\Gamma=2-\frac{1}{\gamma}$, $\rho_0=\rho_0^2=\frac{ c(c-1)\Gamma\mu}{(2c-1)\|M\|^2}$, $\rho_k=\frac{\rho_0}{\tau_k^2}$, $\beta_k=\frac{\Gamma}{\rho_k\|M\|^2}$, $\eta_k=(1-\gamma)\rho_k$, $\tau_{k+1}=\frac{c}{k+1+c}$;
			\item the semi-implicit scheme (Semi-APD) \cite[scheme (3.20)]{30}: $\alpha_k=\frac{\sqrt{\theta_k\beta_k}}{\|M\|}$, $\theta_k=\frac{\theta_{k-1}}{1+\alpha_{k-1}}$, $\gamma_k=\frac{\gamma_{k-1}+\mu\alpha_{k-1}}{1+\alpha_{k-1}}$, $\beta_k=\frac{\beta_{k-1}+\mu\alpha_{k-1}}{1+\alpha_{k-1}}$;
			\item the primal-dual splitting algorithm (PDSA) [Algorithm \ref{al_3}]: $\gamma=3.4$, $\delta=0.3$, $\alpha_k=\frac{1}{k}$, $\beta_k=\frac{\mu k^2}{3\|M\|^2}$, $\epsilon_k=\frac{1}{\alpha_k\beta_kk^3}$.
		\end{itemize}

		All competing algorithms can achieve an $O(1/k^2)$ convergence rate where $f$ is strongly convex while $g$ remains convex. Figure \ref{fig2} demonstrates that our algorithm attains a lower composite objective residual, violation of feasibility and objective residual with respect to the number of iterations, indicating a faster convergence rate relative to the competing methods in this case.
		\begin{figure}
			\centering
			\includegraphics[width=.9\textwidth]{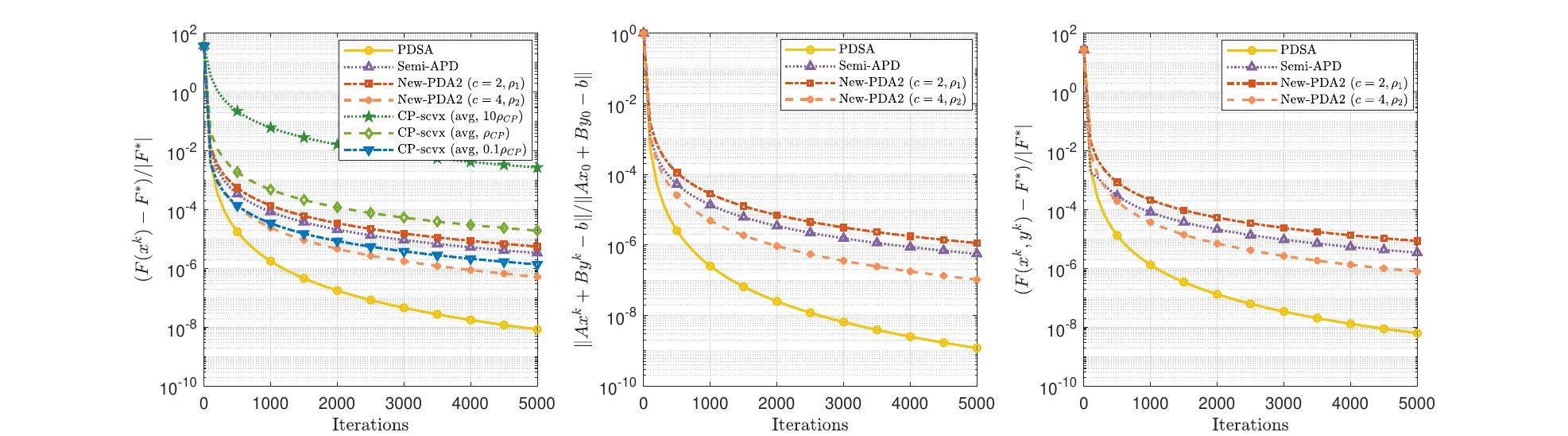}
			\caption{The convergence behaviors of algorithms for problem (\ref{szsy1}) when $f$ is a strongly convex function. The problem size is $(m, n) = (300, 3000)$.}
			\label{fig2}
		\end{figure}

		Furthermore, a different dimensional setting with $m=400$ and $n=5000$ is considered in Figure \ref{fig3}. Clearly, PDSA continues to outperform the other algorithms.
		\begin{figure}
			\centering
			\includegraphics[width=.9\textwidth]{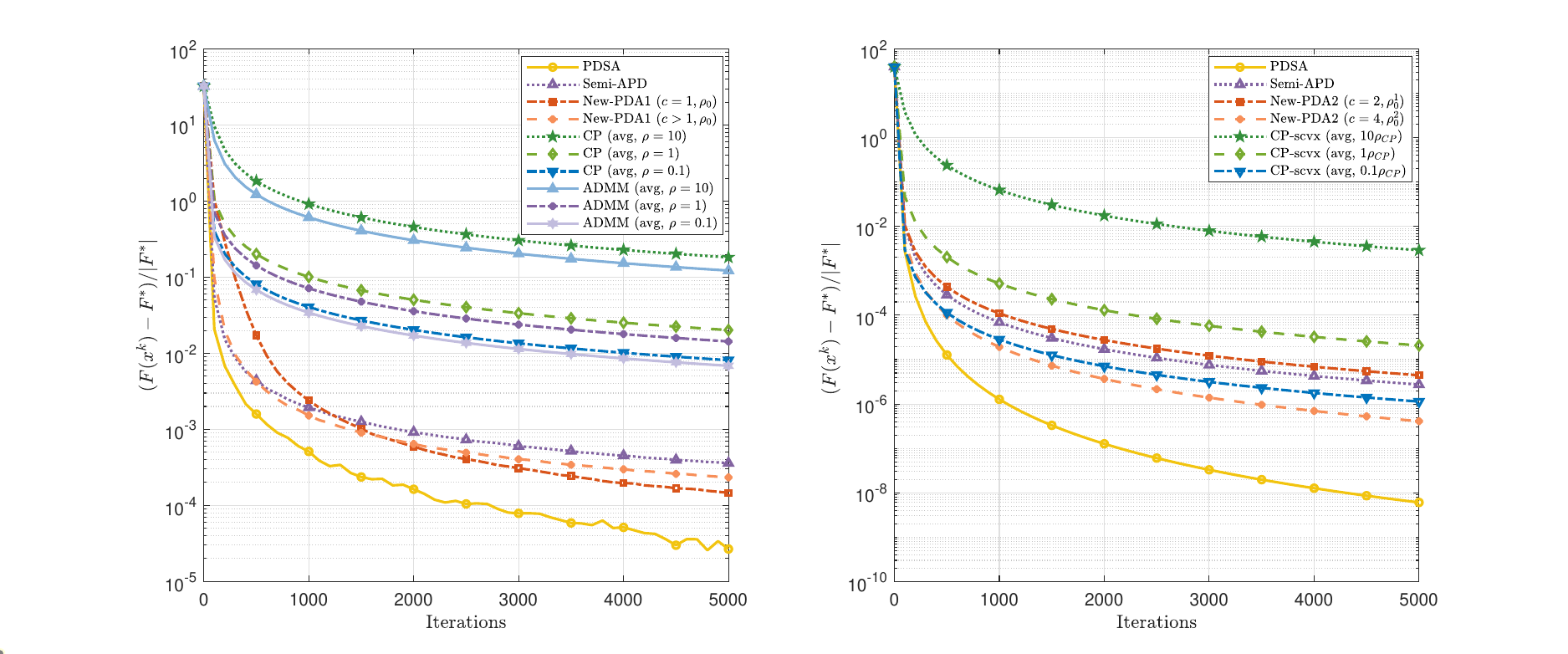}
			\caption{The convergence behaviors of algorithms for problem (\ref{szsy1}). Left: Case I (general convex); Right: Case II (partially strongly convex). The problem size is $(m, n) = (400, 5000)$.}
			\label{fig3}
		\end{figure}
		
	\end{example}
	\begin{example} Let $x:=(x_1,x_2,x_3)\in \mathbb{R}^3$ and $y:=(y_1,y_2,y_3)\in \mathbb{R}^3$.
		Consider a particular $l_1-l_1$ minimization problem:
		\begin{equation*}
			\underset{x\in \mathcal{R}^n}\min\,\lambda\|x\|_1+\|Mx-b\|_1,
		\end{equation*}
		where $M = \text{diag}(p, q, r)$ is a diagonal matrix with $p,q,r\in \mathbb{R}\backslash\{0\}$ and $b\in\mathbb{R}^3$ is a constant vector with all entries equal to $d$. This problem is equivalent to:
		\begin{equation*}
			\begin{array}{ll}
				&\min\limits_{x, y}~\varPhi  (x,y):=\lambda\|y\|_1+\|x-b\|_1\\
				&~~~\mbox{s.t.}~~~~~x-My=0.
			\end{array}
		\end{equation*}
		For different choices of $p, q ,r ,\lambda$ and $d$, we observe that the sequence $\{(x_k,y_k)\}_{k\geq1}$ generated by PDSA always converges to the minimum norm solution. In the following numerical experiments, we take the initial point $y^{(0)}=(-0.5, 0.5, 1)^T$, $x^{(0)}=My^{(0)}$ and $\lambda^{(0)}=(0, 0, 0)^T$. We set the parameters for two cases.
		
		$\mathbf{ Case~ I}$: Let $\lambda=3$, $d=2$, $p=2$, $q=3$, $r=1$ and $\gamma=2$, $\delta=0.7$, $\alpha_k=\frac{1}{k}$, $\beta_k = k$, $ \epsilon_k = \frac{1} {\sqrt{k}}$ in PDSA. In this case, the optimal solution set of the problem is $S = \{(0, 3y_2, 0, 0, y_2, 0) : y_2 \in [0, \frac{2}{3}]\}$ and the optimal value is $6$. Moreover, the minimal norm solution of the problem is $(\bar{x}^*, \bar{y}^*) = (0,0,0,0,0,0)$.
		
		$\mathbf{ Case~ II}$: Let $\lambda=2$, $d=2$, $p=1$, $q=1$, $r=2$ and $\gamma=2$, $\delta=0.7$, $\alpha_k=\frac{1}{k}$, $\beta_k = k$, $ \epsilon_k = \frac{1} {\sqrt{k}}$ in PDSA. In this case, the optimal solution set of the problem is $S = \{(0, 0, 2y_3, 0, 0, y_3) : y_3 \in [0, 1]\}$ and the optimal value is $6$. Moreover, the minimal norm solution is $(\bar{x}^*, \bar{y}^*) = (0,0,0,0,0,0)$.
		\begin{figure}
			\centering
			\includegraphics[width=.9\textwidth]{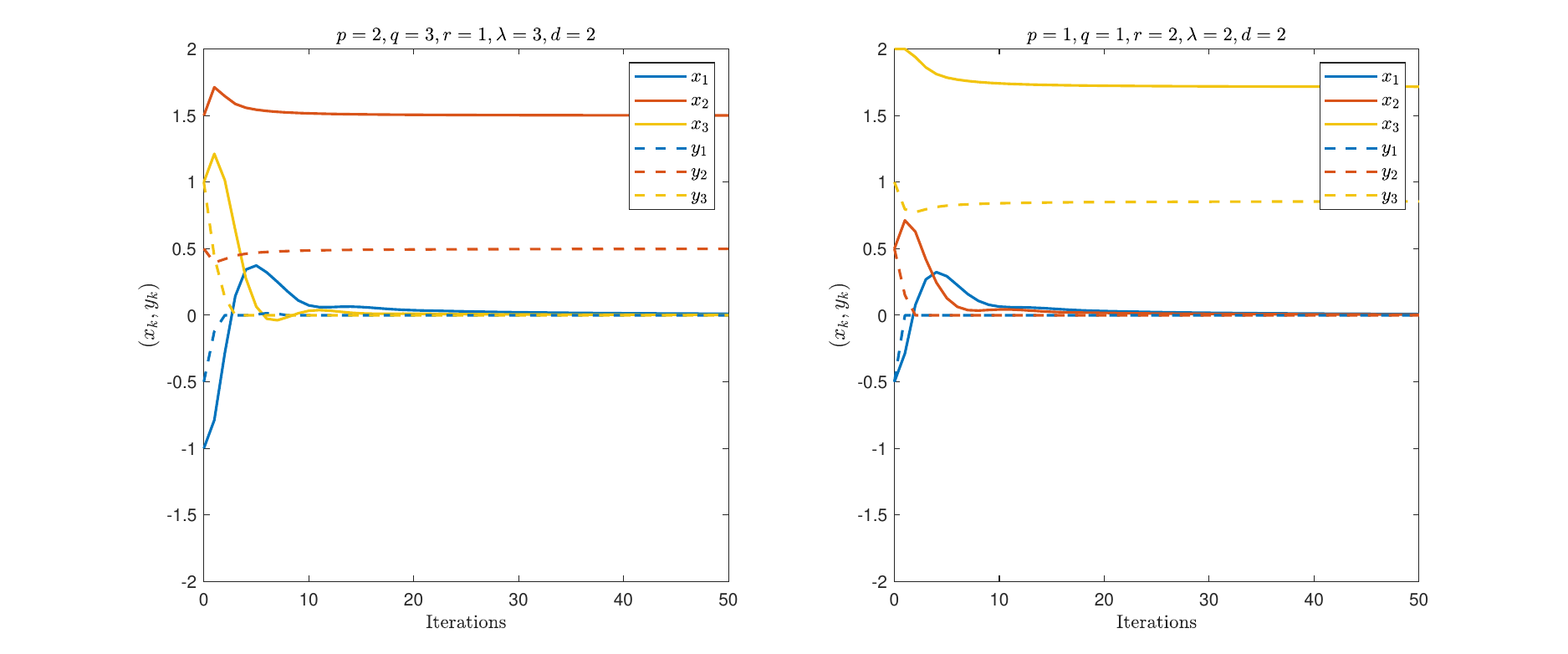}
			\caption{Numerical results of PDSA under the different choices of parameters with $\epsilon_k=0$.}
			\label{bfigzx}
		\end{figure}
		
		\begin{figure}
			\centering
			\includegraphics[width=.9\textwidth]{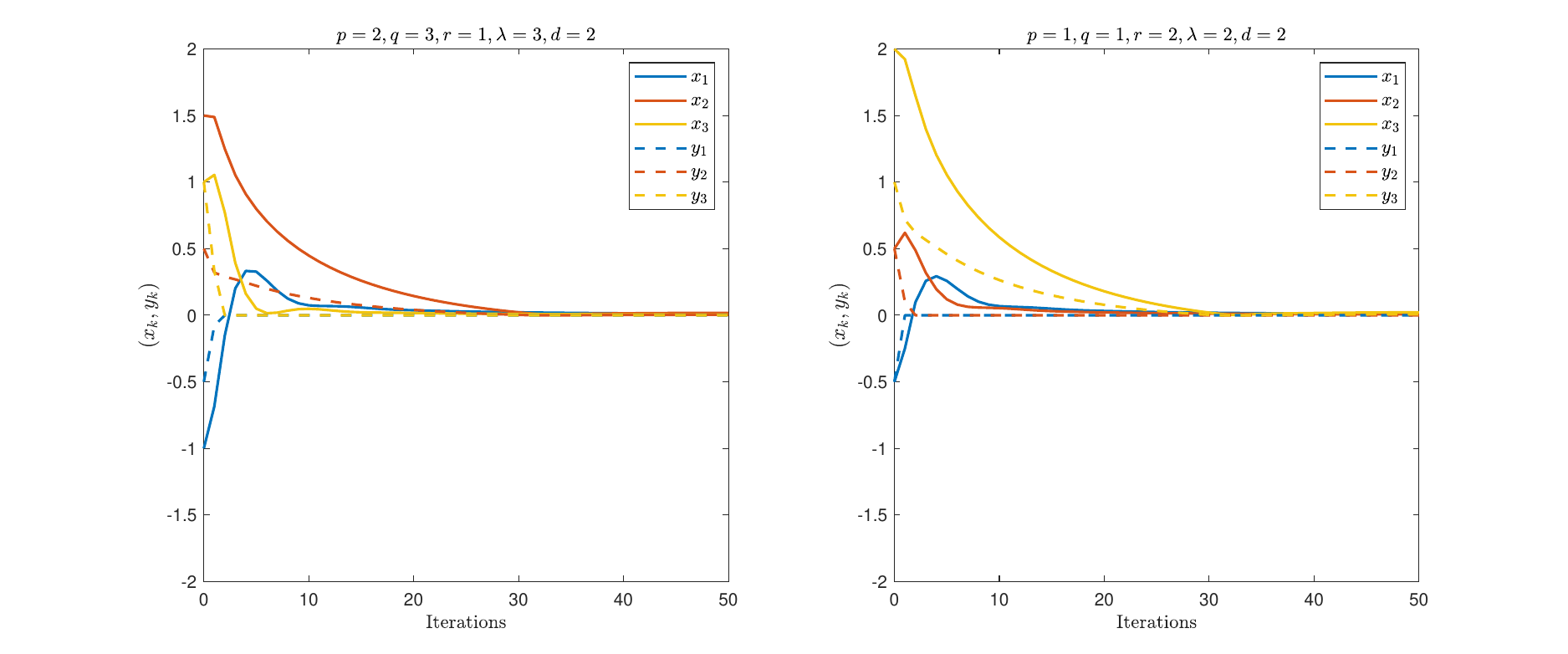}
			\caption{Numerical results of PDSA under the different choices of parameters with $\epsilon_k\neq 0$.}
			\label{figzxfsj}
		\end{figure}
		
		Under these two parameter settings considered in Cases I and II, the numerical results shown in Figures \ref{figzxfsj} and \ref{bfigzx} demonstrate that the sequences generated by PDSA with Tikhonov regularization converge to its minimum norm solution.
	\end{example}
	
	\section{Conclusion}
	
	In this paper, we propose novel numerical algorithms for solving separable convex optimization problems by discretizing the mixed-order dynamical system $(\ref{dyn})$, which incorporates time scales and a Tikhonov regularization. Our analysis not only establishes rapid convergence rates for the objective function, the primal-dual gap and the feasibility violation, but also demonstrates strong convergence of these sequences generated by proposed algorithms under the nonsmooth assumptions on $f$ and $g$.

	As part of future work, several promising directions deserve further exploration. First, given that classical splitting schemes cannot be directly extended to multi-block settings \cite{12}, an interesting question is how to discretize an appropriate dynamical system to develop novel splitting algorithms for multi-block convex optimization. Furthermore, inspired by \cite{24}, a natural extension would be to investigate whether incorporating a Hessian-driven damping term within such an algorithmic framework can reduce oscillations and achieve comparable or even better convergence rates.
	
	%%%%%%%%%%%%%%%%%%%%%%%%%%%%%%%%%%%%%%%%%%%%%%%%%%%%%%%%%
	\appendix
	
	\section*{CRediT authorship contribution statement}
	{\bf Geng-Hua Li:} Conceptualization, Supervision,  Funding acquisition, Writing.
	{\bf Hai-yi Zhao:} Conceptualization, Software, Visualization, Writing.
	{\bf Xiangkai Sun:} Methodology, Supervision,
	Funding acquisition, Writing.
	
	\section*{Declaration of competing interest}
	
	\small{ The authors declare that they have no known competing financial interests or personal relationships that could have appeared to influence the work reported in this paper.}
	
	\section*{Data availability}
	
	\small{ The authors confirm that all data generated or analysed during this study are included in this article.}

\end{document}